%%%%%%%%%%%%%%%%%%%%%%%%%%%%%%%%%%%%%%%%%%%%%%%%%%%%%%%%%%%%%%%%%
% This template lays out a custom style to be used for articles
% in Episciences journals
%%%%%%%%%%%%%%%%%%%%%%%%%%%%%%%%%%%%%%%%%%%%%%%%%%%%%%%%%%%%%%%%%
\documentclass[11pt]{article}
\usepackage{epigamath}

\usepackage[notext]{kpfonts}
\usepackage{baskervald}

%%%%%%%%%%%%%%%%%%%%%%%%%%%%%%%%%%%%%%%%%%%%%%%%%%%%%%
% Set paper type (A4 is default, uncomment for Letter)
%%%%%%%%%%%%%%%%%%%%%%%%%%%%%%%%%%%%%%%%%%%%%%%%%%%%%%
%\setpapertype{letter}
\setpapertype{A4}

%%%%%%%%%%%%%%%%%%%%%%%%%%%%%%%%%%%%%%%%%%%%%%%%%%%%%%%%%%%%%%%
% Hyperref colors, default ones are specified in the first line
% use second line for black & white printing
%%%%%%%%%%%%%%%%%%%%%%%%%%%%%%%%%%%%%%%%%%%%%%%%%%%%%%%%%%%%%%%
% \hypersetup{linkcolor=darkgreen,citecolor=darkblue,filecolor=darkred,urlcolor=darkmagenta}
% \hypersetup{linkcolor=black,citecolor=black,filecolor=black,urlcolor=black}

%%%%%%%%%%%%%%%%%%%%%%%%%%%%%%%%%%%%%%%%%%%%%%
% Comment/uncomment/modify for other languages
%%%%%%%%%%%%%%%%%%%%%%%%%%%%%%%%%%%%%%%%%%%%%%
\usepackage[french]{babel}
%\usepackage[german]{babel}
%\usepackage[english]{babel}

%%%%%%%%%%%%%%%%%%%%%%%%%%%%%%%%%%%%%
% Support for graphics to be included
%%%%%%%%%%%%%%%%%%%%%%%%%%%%%%%%%%%%%
%%\usepackage[pdftex]{graphicx}  % Use this with pdflatex
\usepackage[dvips]{graphicx}     % Use this instead with dvips

%%%%%%%%%%%%%%%%%%%%%%%%%%%%%%%%%%%%%
% Define any new environment you need
%%%%%%%%%%%%%%%%%%%%%%%%%%%%%%%%%%%%%

%%%%%%%%%%%%%%%%%%%%%%%%%%%%%%%%%%%%%%%%%%%%%%%%%%%%%%%%%%%%%%%%%
% Title, author, date, journal information (fill in as necessary)
% List of author addresses is at the end
%%%%%%%%%%%%%%%%%%%%%%%%%%%%%%%%%%%%%%%%%%%%%%%%%%%%%%%%%%%%%%%%%
\title{Troisi\`eme groupe de cohomologie non ramifi\'ee des torseurs universels sur les surfaces rationnelles}
\titlemark{Troisi\`eme groupe de cohomologie non ramifi\'ee des torseurs universels sur les surfaces rationnelles}
\author{\vspace{0cm} Yang Cao}
%\contrib[With an appendix by]{}
\authoraddresses{
\authordata{Yang Cao}{\firstname{Yang} \lastname{Cao}\\
\institution{Laboratoire de Math\'ematiques d'Orsay,
Universit\'e Paris-Sud, CNRS, Universit\'e Paris-Saclay, 91405 Orsay, France}\\
\email{yangcao1988@gmail.com}}
}
\authormark{Y. Cao}
\date{\vspace{-5ex}} % Empty date or tweak it according to your needs
\journal{\'Epijournal de G\'eom\'etrie Alg\'ebrique} % Epijournal name
\acceptation{Received by the Editors on May 3, 2017, and in final form
on July 18, 2018. \\ Accepted on October 16, 2018.}

%%%%%%%%%%%%%%%%%
% Thanks (if any)
%%%%%%%%%%%%%%%%%

%\acknowledgement{}

 \usepackage[all]{xy}

\allowdisplaybreaks

\numberwithin{equation}{numsection}
%\renewcommand{\theequation}{\arabic{equation}}

%%%%%%%%%%%%%%%%%%%%%%%%%%%%%%%%%%%%%%%%%%%%%%%%%%%%%%%%%%%%%%%%%%

\providecommand{\U}[1]{\protect\rule{.1in}{.1in}}
%EndMSIPreambleData
\RequirePackage{amsmath}
\RequirePackage{amssymb}
\newcommand{\BA}{{\mathbb {A}}}

\newcommand{\BG}{{\mathbb {G}}}
\newcommand{\BH}{{\mathbb {H}}}

\newcommand{\BN}{{\mathbb {N}}}

\newcommand{\BP}{{\mathbb {P}}}
\newcommand{\BQ}{{\mathbb {Q}}}

\newcommand{\BZ}{{\mathbb {Z}}}

\newcommand{\CD}{{\mathcal {D}}}

\newcommand{\CH}{{\mathcal {H}}}

\newcommand{\CK}{{\mathcal {K}}}

\newcommand{\CO}{{\mathcal {O}}}

\newcommand{\CT}{{\mathcal {T}}}

\newcommand{\CZ}{{\mathcal {Z}}}

\newcommand{\RP}{{\mathrm {P}}}

\newcommand{\Br}{{\mathrm{Br}}}

\newcommand{\codim}{{\mathrm{codim}}}
\newcommand{\Coker}{{\mathrm{Coker}}}
\newcommand{\coker}{{\mathrm{coker}}}

\newcommand{\Div}{{\mathrm{Div}}}
\newcommand{\Pic}{{\mathrm{Pic}}}

\newcommand{\Gal}{{\mathrm{Gal}}}
\newcommand{\Hom}{{\mathrm{Hom}}}

\newcommand{\Ker}{{\mathrm{Ker}}}

\newcommand{\Res}{{\mathrm{Res}}}

\newcommand{\Spec}{{\mathrm{Spec}}}

\renewcommand{\Im}{{\mathrm{Im}}}
\newcommand{\CCH}{{\mathrm{CH}}}
\newcommand{\nr}{{\mathrm{nr}}}
\newcommand{\Sym}{{\mathrm{Sym}}}
\newcommand{\symprod}{\cdot}

\newcommand{\ra}{\rightarrow}
\newcommand{\iso}{\stackrel{\sim}{\rightarrow} }
\newcommand{\lto}{\longmapsto}

\newcommand{\sbt}{\subset}
\newcommand{\bk}{\bar{k}}

\newcommand{\et}{{\rm{\acute et}}}

\newtheorem{defi}{D\'efinition}[section]

\newtheorem{exam}[defi]{Example}

\newtheorem{rem}[defi]{Remarque}

\newtheorem{thm}[defi]{Th\'eor\`eme}

\newtheorem{cor}[defi]{Corollaire}

\newtheorem{lem}[defi]{Lemme}

\newtheorem{prop}[defi]{Proposition}

%\renewcommand*{\qedsymbol}{\Peace}

%%%%%%%%%%%%%%%%%%%%%%%%%%%%%%%%%%%%%%%%%%%%%%%%%%%%%%%%%%%%%%%%%%%%%

\begin{document}

%%%%%%%%%%%%%%%%%%%%%%%%%%%%%%%
% Add the title to the document
%%%%%%%%%%%%%%%%%%%%%%%%%%%%%%%

\maketitle

%\contribution{}

%%%%%%%%%%%%%%%%%%%%%
% Dedication (if any)
%%%%%%%%%%%%%%%%%%%%%
%\dedication{}

%%%%%%%%%%%%%%%%%%%%%%%%%%%%%%%%%%%%%%%%%%%%%%%%%%%%%%%%%%
% Add abstract, Keywords, MSC classification (recommended)
% Never remove prelims section, make it rather empty
%%%%%%%%%%%%%%%%%%%%%%%%%%%%%%%%%%%%%%%%%%%%%%%%%%%%%%%%%%
\begin{prelims}

\vspace{-0.55cm}

\def\abstractname{R\'esum\'e}
\abstract{Soit $k$ un corps de caract\'eristique z\'ero. Soit $X$ une $k$-surface projective et lisse g\'eom\'etriquement rationnelle.
Soit  $\CT$ un torseur universel sur $X$ poss\'edant un $k$-point, et soit $\CT^{c}$ une compactification lisse de $\CT$. 
La question de savoir si $\CT^{c}$ est $k$-birationnel \`a un espace projectif est encore ouverte.
On sait que les deux premiers groupes de cohomologie non ramifi\'ee de $\CT$ et $\CT^{c}$
sont r\'eduits \`a leur partie constante. On donne une condition suffisante en termes de la structure galoisienne
du groupe de Picard g\'eom\'etrique  de $X$ assurant l'\'enonc\'e analogue pour les troisi\`emes groupes de cohomologie
non ramifi\'ee de $\CT$ et $\CT^{c}$. Ceci permet de montrer que $H^{3}_{\nr}(\CT^{c},\BQ/\BZ(2))/H^3(k,\BQ/\BZ(2))$ est nul si $X$ est une surface de Ch\^atelet g\'en\'eralis\'ee, et que ce  groupe est r\'eduit \`a sa partie $2$-primaire si $X$ est une surface de del Pezzo de degr\'e au moins 2.}

\vspace{0.15cm}

\languagesection{English}{%

%\vspace{-0.05cm}
\textbf{Title. Third unramified cohomology group of universal torsors on rational surfaces} \commentskip \textbf{Abstract.} Let $k$ a field of characteristic zero. Let $X$ be a   smooth, projective, geometrically rational $k$-surface.
Let $\mathcal{T}$ be a universal torsor over $X$ with a $k$-point et $\mathcal{T}^c$ a smooth compactification of $\mathcal{T}$.
There is an open question: is $\mathcal{T}^c$ $k$-birationally equivalent to a projective space?
We know that the unramified cohomology groups of degree 1 and 2 of $\mathcal{T}$ and $\mathcal{T}^c$ are reduced to their constant part.
For the analogue of the third cohomology groups, we give a sufficient condition using the Galois structure of the geometrical Picard group of $X$.
This enables us to show that $H^{3}_{\nr}(\mathcal{T}^{c},\mathbb{Q}/\mathbb{Z}(2))/H^3(k,\mathbb{Q}/\mathbb{Z}(2))$ vanishes if $X$ is a generalised Ch\^atelet surface and that this group is reduced to its $2$-primary part if $X$ is a del Pezzo surface of degree at least 2.}
 
\keywords{Rationality question; unramified cohomology; universal torsor}

\MSCclass{14E08, 12G05}

\end{prelims}

%%%%%%%%%%%%%%%%%%%%%
% Content begins here
%%%%%%%%%%%%%%%%%%%%%

\newpage

% Add table of contents (optional)
\setcounter{tocdepth}{1} \tableofcontents

\section{Introduction}

Soit $k$ un corps de caract\'eristique 0.
 Pour une vari\'et\'e  $X$ sur $k$ et un faisceau \'etale $F$ sur $X$,  \emph{la cohomologie non ramifi\'ee} de $X$ de degr\'e $n$ est ici par d\'efinition  le groupe 
$$H_{\nr}^n(X,F):=H^0_{Zariski}(X,\CH^n(X,F)),$$
 o\`u $\CH^n(X,F)$ est le faisceau Zariski associ\'e au pr\'efaisceau $\{ U\sbt X\}\mapsto H^n_{\acute{e}t}(U,F)$.
 Ces groupes sont des invariants $k$-birationnels de $k$-vari\'et\'es int\`egres projectives et lisses (\cite[Thm. 4.1.1]{CT95}).
Si $X$ est projective, lisse et  $k$-rationnelle, par \cite[Thm. 4.1.1 et Prop. 4.1.4]{CT95}, on a  $ H^i(k,\BQ/\BZ(j))  \iso H^{i}_{\nr}(X,\BQ/\BZ(j))$ pour tous entiers $ i\in \BN, j\in \BZ$.

Soit $X$ une $k$-surface projective, lisse, g\'eom\'etriquement rationnelle.
Soit $\CT \to X$ un torseur universel sur $X$  \cite{CTS}.  
Ce torseur est une $k$-vari\'et\'e g\'eom\'etriquement rationnelle (Corollaire \ref{H0K2UD}).
En 1979, Colliot-Th\'el\`ene
et Sansuc  \cite[Question Q1, p. 227]{CTSA} ont pos\'e la question  : si $\CT$ poss\`ede un point rationnel,
la $k$-vari\'et\'e $\CT$ est-elle  une $k$-vari\'et\'e $k$-rationnelle? Cette question est toujours ouverte.

Un certain nombre d'invariants $k$-birationnels sont triviaux sur toute compactification lisse
$\CT^{c}$ de $\CT$.
 Ainsi  les applications de restriction
  $ H^i(k,\BQ/\BZ(i-1))  \to H^{i}_{\nr}(\CT^{c},\BQ/\BZ(i-1))$ 
  sont des isomorphismes pour $i=1$ et $i=2$. Le cas $i=1$ est facile.
Dans le cas $i=2$, ceci dit que l'application de restriction
$\Br(k) \to \Br(\CT^{c})$ sur les groupes de Brauer est un isomorphisme.
Pour ce r\'esultat, voir  \cite[Thm. 1]{CTS2}, \cite[Thm. 2.1.2]{CTS}, \cite[Prop. 1.8]{HS} et le Th\'eor\`eme \ref{torsoruniv} ci-dessous. 
Par ailleurs, pour $\CT$ poss\'edant un $k$-point, on sait (Proposition  \ref{chow0paring}) 
que pour tous $i\in \BN, j\in \BZ$, l'image de $H^{i}_{\nr}(X,\BQ/\BZ(j))$ 
dans $H^{i}_{\nr}(\CT,\BQ/\BZ(j))$ est r\'eduite \`a $H^{i}(k, \BQ/\BZ(j))$.

\medskip

Dans le pr\'esent article, pour $X$, $\CT $ et $\CT^{c}$ comme ci-dessus, 
 nous \'etudions les groupes   
$$H^{3}_{\nr}(\CT^{c},\BQ/\BZ(2))/H^3(k,\BQ/\BZ(2)) \subset
 H^{3}_{\nr}(\CT,\BQ/\BZ(2))/ H^3(k,\BQ/\BZ(2)).$$
Dans \cite{C2}, nous avons \'etabli que $H^{3}_{\nr}(\CT^{c},\BQ/\BZ(2))/H^3(k,\BQ/\BZ(2))$  est fini.

\smallskip

 Les principaux r\'esultats du pr\'esent article sont les suivants.

\begin{itemize}
\item[\rm (a)] Le quotient $H^{3}_{\nr}(\CT,\BQ/\BZ(2))/ H^3(k,\BQ/\BZ(2))$
  est   fini.
  
\item[\rm (b)]  Si   $H^1(k,\Sym^2\Pic(X_{\bk}))=0$ et $X(k)\neq \emptyset$, alors $\frac{H_{\nr}^3(\CT,\BQ/\BZ(2))}{H^3(k,\BQ/\BZ(2))}=0$ (Th\'eor\`eme \ref{H4Z2}).
   Pour \'etablir ce r\'esultat, nous appliquons aux torseurs sous un tore
  une technique d\'evelopp\'ee par A.~Merkurjev \cite{Mer} pour \'etudier
   les torseurs sous un 
 groupe semisimple.  
  
\item[\rm (c)] Si $X$ est une surface de Ch\^atelet g\'en\'eralis\'ee, c'est-\`a-dire
un fibr\'e en coniques sur $\BP^1_{k}$ poss\'edant une section sur
une extension quadratique de $k$, et $X(k) \neq \emptyset$,
 alors $\frac{H_{\nr}^3(\CT^c,\BQ/\BZ(2))}{H^3(k,\BQ/\BZ(2))} $
est nul (Th\'eor\`eme \ref{coniccoh}).

\item[\rm (d)] Si $X$ est une surface projective, lisse, 
$k$-birationnellement \'equivalente \`a une surface de del Pezzo de degr\'e $\geq 2$ 
ou \`a une surface fibr\'ee en coniques au-dessus d'une conique,
 alors $\frac{H_{\nr}^3(\CT^c,\BQ/\BZ(2))}{H^3(k,\BQ/\BZ(2))} $
est purement 2-primaire
(Th\'eor\`eme \ref{sauf2prime}).

 \end{itemize}

\subsection*{Conventions et notations}

Soit $k$ un corps quelconque de caract\'eristique $0$. On note $\overline{k}$ une cl\^oture alg\'ebrique et $\Gamma_k:=\Gal(\bk/k)$.

Une $k$-vari\'et\'e $X$ est un $k$-sch\'ema s\'epar\'e de type fini. 
 Pour $X$ une telle vari\'et\'e, on note $k[X]$ son anneau des fonctions globales,
$k[X]^{\times}$ son groupe des fonctions inversibles,
et $\Pic(X):=H^1_{\text{\'et}}(X,\BG_m)$ son groupe de Picard.
Si $X$ est lisse, on note
$\Br(X):=H_{\text {\'et}}^2 (X, \BG_m)$ son groupe de Brauer,
$\CCH^i(X)$ son groupe de Chow de codimension $i$ et $\CCH_i(X)$ son groupe de Chow de dimension $i$.
Pour $X/k$ projective lisse, notons $A_0(X)\sbt \CCH_0(X)$ le groupe de classes des 0-cycles de degr\'e $0$.
Pour tous $i \in \BN, j\in \BZ$, on note $\frac{H^i_{\nr}(X,\BQ/\BZ(j))}{H^i(k,\BQ/\BZ(j))}$ le conoyau du morphisme
$H^i(k,\BQ/\BZ(j))\to H^i_{\nr}(X,\BQ/\BZ(j))$.

Tous les groupes de cohomologie ou d'hypercohomologie  utilis\'es dans cet article sont des groupes de cohomologie \'etale, 
sauf les groupes de cohomologie de Zariski $H^i(X,\CK_j)$ \`a valeurs 
dans des faisceaux de $K$-th\'eorie alg\'ebrique.
Pour chaque sch\'ema $X$, notons $ D^+_{\acute{e}t}(X)$ la cat\'egorie d\'eriv\'ee  born\'ee \`a gauche
de la cat\'egorie des faisceaux \'etales. 
Pour les propri\'et\'es de la cat\'egorie d\'eriv\'ee d'une cat\'egorie ab\'elienne, voir \cite[\S 13.1]{KS}.
Pour tout $n\in \BZ$, on a la sous-cat\'egorie $D^{\geq n}_{\acute{e}t}(X)\sbt D^+_{\acute{e}t}(X)$ (cf. \cite[Notation 13.1.11]{KS}) 
et les foncteurs de troncature $\tau^{\leq n}$ et $\tau^{\geq n}$  (\cite[D\'ef. 12.3.1 et Prop. 13.1.5]{KS}).
Soit 
$$-\otimes^L-:  D^+_{\acute{e}t}(X)\times D^+_{\acute{e}t}(X)\to D^+_{\acute{e}t}(X) : (F,G)\mapsto F\otimes_{\BZ}^LG .$$
Puisque la tor-dimension de $\BZ$ est $1$, le foncteur ci-dessus est bien d\'efini et on a
\begin{equation}\label{0opluse1}
-\otimes^L-  :D^{\geq i}_{\acute{e}t}(X)\times D^{\geq j}_{\acute{e}t}(X)\to D^{\geq i+j-1}_{\acute{e}t}(X).
\end{equation}

Soit $T$ un $k$-tore.
Notons $T^*=\Hom_{{\overline k}-gp}  (T_{\overline{k}}, \BG_{m, \overline{k}})$ le r\'eseau galoisien d\'efini par le groupe
 des caract\`eres g\'eom\'etriques du tore $T$. Donc $T^*\sbt \bk[T]^{\times}$.

Pour un groupe ab\'elien $A$ et un entier $n\in \BZ$, 
on note $A[n]:=\{x \in A, nx=0\}$ et $A_{tors}$ le sous-groupe de torsion de $A$.
On note $\Sym^2A$ la deuxi\`eme puissance sym\'etrique de $A$ (\cite[III \S 6]{Bour}) et $\wedge^i A$ la $i$-i\`eme puissance ext\'erieure de $A$ (\cite[III \S 7]{Bour}).
Si $A\cong A_1\oplus A_2$, on a des isomorphismes naturels
\begin{equation}\label{symoplus}
\Sym^2A_1\oplus (A_1\otimes A_2)\oplus \Sym^2A_2   \iso \Sym^2A  
\ \ \ \text{et}\ \ \ \      
 \wedge^2A_1\oplus (A_1\otimes A_2)\oplus \wedge^2A_2 \iso \wedge^2A 
\end{equation}
Si $A$ est un r\'eseau, on a la suite exacte naturelle
$$0 \to   \wedge^2A\to A\otimes_{\BZ}A\to \Sym^2A\to 0$$
o\`u $ \wedge^2A\to A\otimes_{\BZ}A$ envoie $a \wedge b$ sur $a\otimes b - b\otimes a$.

\section{Rappels}

Dans cette section, on fait des rappels sur plusieurs sujets: cohomologie motivique, vari\'et\'es toriques, vari\'et\'es cellulaires,  torseurs universels et leur cohomologie
\`a coefficients $\BG_{m}$.
Soit $k$ un corps de caract\'eristique~0. 

\subsection{Cohomologie motivique}

Notons $K_i(A)$ le $i$-i\`eme groupe de  $K$-th\'eorie de Quillen d'un anneau $A$.
Sur tout sch\'ema $X$, on note $\CK_i$ le faisceau pour la topologie de Zariski sur $X$ associ\'e au pr\'efaisceau 
$U\mapsto K_i(H^0(U,\CO_X)) $ et $\CK_{i,\et}$ celui pour la topologie \'etale.
On note $H^i(X,\CK_j)$ ses groupes de  cohomologie pour la topologie de  Zariski. 
Par la r\'esolution de Gersten (un th\'eor\`eme de Quillen, cf.  \cite{CTHK} pour une d\'emonstration du cas g\'en\'eral), 
si $X$ est une $k$-vari\'et\'e lisse, le groupe  $H^i(X,\CK_j)$ est le $i$-i\`eme groupe de cohomologie du complexe :
\begin{equation}\label{resolutiongersten}
0\to \oplus_{x\in X^{(0)}}K_j(k(x))\to \oplus_{x\in X^{(1)}}K_{j-1}(k(x))\to \cdots \to \oplus_{x\in X^{(j)}}\BZ \to 0,
\end{equation}
o\`u $X^{(l)}$ est l'ensemble des points de codimension $l$ de $X$ pour tout $l\in \BZ_{\geq 0}$.

On utilise le complexe motivique $\BZ(r)$ de faisceaux de cohomologie \'etale sur les vari\'et\'es lisses sur $k$ pour $r=0,1,2$, comme d\'efini par Lichtenbaum (\cite{L1} \cite{L2}).
On utilise le complexe motivique ``de Lichtenbaum'' pour pouvoir utiliser la m\'ethode de  Merkurjev dans \cite{Mer}.
 Alors $\BZ(0)=\BZ$, $\BZ(1)\iso \BG_m[-1]$, et $\BZ(2)$ est support\'e en degr\'e 1 et 2. 
Si $X$ est un sch\'ema de type fini sur $k$, on a $\CH^2(X,\BZ(2))\cong \CK_{2,\et}(X)$ et
un triangle dans $D^+_{\acute{e}t}(\CT)$ pour $r=0,1,2$:
\begin{equation}\label{triangleZ2}
\xymatrix{\BZ(r)\ar[r]^n &\BZ(r)\ar[r]&\BZ/n(r)\ar[r]^-{+1}&.}
\end{equation}
 
 De plus, on sait (\cite{K93,K96,Sus}, cf. \cite[\S 1]{CT1}):

\begin{thm}\label{motivic}
Soit $X$ une  $k$-vari\'et\'e lisse g\'eom\'etriquement int\`egre de corps de fonctions $k(X)$.
\begin{itemize}
\item[\rm (i)] On a les \'egalit\'es : $\BH^0(X,\BZ(2))=0$, $\BH^1(X,\BZ(2))=K_{3,indec}(k(X))$, $\BH^2(X,\BZ(2))=H^0(X,\CK_2)$ et $\BH^3(X,\BZ(2))=H^1(X,\CK_2)$, o\`u 
$$K_{3,indec}(k(X)):=\Coker(K_3^{Milnor}(k(X))\to K_3(k(X))).$$
De plus, $K_{3,indec}(\bk(X))$ est extension d'un groupe uniquement divisible par $\BQ/\BZ(2)$.
\item[\rm (ii)]
On a  une suite exacte naturelle:
\begin{equation}\label{e1}
\xymatrix{0\ar[r]&\CCH^2(X)\ar[r]^-{\mathrm{cl}}&\BH^4(X,\BZ(2))\ar[r]&H^3_{\nr}(X,\BQ/\BZ(2))\ar[r]&0,}
\end{equation}
o\`u $\mathrm{cl}$ est induit par $\CCH^2(X)\cong H^4(X,\CK_2)\cong \BH^4_{\mathrm{Zari}}(X,\BZ(2))\to \BH^4(X,\BZ(2))$ {\rm (\cite[(10)]{K96})}.
\end{itemize}
\end{thm}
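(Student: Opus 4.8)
The plan is to read off the low-degree hypercohomology of the Lichtenbaum complex $\BZ(2)$ by comparing the étale and Zariski topologies and feeding in the Gersten resolution. Write $\epsilon\colon X_{\et}\to X_{\mathrm{Zar}}$ for the change-of-topology morphism and $\BZ(2)_{\mathrm{Zar}}$ for the Zariski motivic complex. Two inputs drive everything. First, the weight-two case of the Beilinson–Lichtenbaum statement (due to Merkurjev–Suslin and Lichtenbaum), which asserts that $\BZ(2)_{\mathrm{Zar}}\to\tau^{\leq 3}R\epsilon_*\BZ(2)$ is a quasi-isomorphism; in particular $\BH^i(X,\BZ(2))\cong\BH^i_{\mathrm{Zar}}(X,\BZ(2))$ for $i\leq 3$. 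Second, the cohomology sheaves of $\BZ(2)_{\mathrm{Zar}}$: it is concentrated in degrees $1$ and $2$, with $\mathscr{H}^2\cong\CK_2$, while the Gersten complex computing $\mathscr{H}^1(\BZ(2)_{\mathrm{Zar}})$ reduces at the generic point to $K_{3,indec}(k(X))$, the next term $H^0_{\CM}(k(x),\BZ(1))$ being zero.

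For (i), I would note that $\BH^0(X,\BZ(2))=0$ is immediate from $\BZ(2)\in D^{\geq 1}_{\acute{e}t}(X)$. For the rest I pass to the Zariski side and run the hypercohomology spectral sequence $E_2^{p,q}=H^p_{\mathrm{Zar}}(X,\mathscr{H}^q(\BZ(2)_{\mathrm{Zar}}))\Rightarrow\BH^{p+q}_{\mathrm{Zar}}(X,\BZ(2))$. The crucial point is that the Gersten resolution of $\mathscr{H}^1(\BZ(2)_{\mathrm{Zar}})$ is concentrated in degree $0$, so $E_2^{0,1}=K_{3,indec}(k(X))$ while $E_2^{p,1}=0$ for all $p\geq 1$. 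With the whole row $q=1$ collapsed onto $p=0$, the surviving contributions are $E_2^{0,1}$, $E_2^{p,2}=H^p(X,\CK_2)$, and no differentials interfere in total degree $\leq 4$; this yields at once $\BH^1=K_{3,indec}(k(X))$, $\BH^2=H^0(X,\CK_2)$, $\BH^3=H^1(X,\CK_2)$, and as a byproduct the identification $\BH^4_{\mathrm{Zar}}(X,\BZ(2))=H^2(X,\CK_2)\cong\CCH^2(X)$ via Bloch–Quillen, which will be needed for (ii).

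For (ii), the idea is to push the comparison one degree further, into the range where the two topologies diverge. From the distinguished triangle $\BZ(2)_{\mathrm{Zar}}\to R\epsilon_*\BZ(2)\to\tau^{\geq 4}R\epsilon_*\BZ(2)\xrightarrow{+1}$ and the coefficient triangle $\BZ(2)\to\BQ(2)\to\BQ/\BZ(2)$, one computes $R^4\epsilon_*\BZ(2)\cong R^3\epsilon_*\BQ/\BZ(2)=\CH^3(X,\BQ/\BZ(2))$, using that $R^q\epsilon_*\BQ(2)=0$ for $q\geq 3$ and that the motivic and Tate-twist versions of $\BQ/\BZ(2)$ coincide. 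Taking Zariski hypercohomology of the triangle in degree $4$ then gives the exact sequence
$$\BH^4_{\mathrm{Zar}}(X,\BZ(2))\xrightarrow{\mathrm{cl}}\BH^4(X,\BZ(2))\to H^0_{\mathrm{Zar}}(X,\CH^3(X,\BQ/\BZ(2)))\xrightarrow{\delta}\BH^5_{\mathrm{Zar}}(X,\BZ(2)).$$
Because the cone $\tau^{\geq 4}R\epsilon_*\BZ(2)$ lives in degrees $\geq 4$, the term to the left of $\mathrm{cl}$ vanishes, so $\mathrm{cl}$ is injective; and $H^0_{\mathrm{Zar}}(X,\CH^3(X,\BQ/\BZ(2)))$ is by definition $H^3_{\nr}(X,\BQ/\BZ(2))$. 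Combined with $\BH^4_{\mathrm{Zar}}(X,\BZ(2))\cong\CCH^2(X)$ from (i), this produces the desired four-term sequence up to right-exactness.

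The main obstacle is precisely the surjectivity of $\BH^4(X,\BZ(2))\to H^3_{\nr}(X,\BQ/\BZ(2))$, equivalently the vanishing of the connecting map $\delta$, whose target spectral-sequence analysis identifies with $H^3(X,\CK_2)$. For $X$ a surface this is free, since $\BH^5_{\mathrm{Zar}}(X,\BZ(2))=H^3(X,\CK_2)=0$ by the length of the Gersten complex; but in the stated generality (needed later for the higher-dimensional $\CT$ and $\CT^c$) one cannot argue by dimension, and showing that an unramified class dies under $\delta$ is the substantive content. I would isolate this as the key step and otherwise invoke the identifications of the cohomology sheaves and the Beilinson–Lichtenbaum comparison as the standard inputs cited from \cite{K96}, treating the vanishing of $\delta$ as Kahn's theorem.
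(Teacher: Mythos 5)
The paper itself gives no proof of this theorem: it is recalled from Kahn \cite{K96} (cf. \cite[\S 1]{CT1}), so your proposal can only be measured against the standard derivation, and that is indeed the route you take. Your treatment of part (i) via the two-row hypercohomology spectral sequence, the identifications $\BH^4_{\mathrm{Zar}}(X,\BZ(2))\cong H^2(X,\CK_2)\cong \CCH^2(X)$ and $R^4\epsilon_*\BZ(2)\cong \CH^3(X,\BQ/\BZ(2))$, and the injectivity of $\mathrm{cl}$ in (ii) are all correct, granted the weight-two Beilinson--Lichtenbaum comparison that you rightly attribute to Lichtenbaum and Merkurjev--Suslin.

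The problem is your last paragraph, and it is a genuine mathematical error, not a mere omission. The connecting map $\delta$, whose vanishing you declare to be ``the substantive content'' and which you decline to prove (``treating the vanishing of $\delta$ as Kahn's theorem''), is zero for a reason already contained in the tools you are using: its target $\BH^5_{\mathrm{Zar}}(X,\BZ(2))\cong H^3_{\mathrm{Zar}}(X,\CK_2)$ vanishes for \emph{every} smooth variety, of any dimension. Indeed, in the Gersten resolution (\ref{resolutiongersten}) the term in codimension $p$ is $\oplus_{x\in X^{(p)}}K_{j-p}(k(x))$, and for $j=2$ this complex stops in codimension $2$ --- not in codimension $\dim X$ --- because negative $K$-groups of fields vanish; hence $H^p_{\mathrm{Zar}}(X,\CK_2)=0$ for all $p\geq 3$. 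Your claim that ``in the stated generality one cannot argue by dimension'' confuses the length of this complex, which is bounded by the weight (here $2$), with the dimension of $X$; for a surface the two bounds happen to coincide, which is what misled you. With the single observation $H^3_{\mathrm{Zar}}(X,\CK_2)=0$, your argument closes up and proves (i) and (ii) in full generality, including the surjectivity in (\ref{e1}) --- which is exactly why this sequence is available for the higher-dimensional torsors $\CT$ and $\CT^c$ later in the paper. As written, however, your proposal leaves the right-exactness unestablished and, worse, asserts a false obstruction to establishing it.
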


Dans \cite[Prop. 2.5]{L1}, Lichtenbaum a d\'efini le cup-produit des complexes motiviques 
$$\cup:\  \BZ(1)\otimes^L \BZ(1)\to \BZ(2),$$
et ce morphisme induit un morphisme de faisceaux:
\begin{equation}\label{motivice1}
\BG_m\otimes \BG_m\cong \CH^2(\BZ(1)\otimes^L \BZ(1))\to \CH^2(\BZ(2))\cong \CK_{2,\et},
\end{equation}
qui est exactement le morphisme $\BG_m\otimes \BG_m\cong \CK_1\otimes \CK_1\to \CK_{2,\et} $ induit par le cup-produit de groupes de K-th\'eorie $K_1\otimes K_1\to K_2$ (\cite[Rem. 2.6]{L1}).
Donc la composition
\begin{equation}\label{motivice2}
 H^2(X,\BZ_X(1))\otimes H^2(X,\BZ_X(1))\iso \Pic(X)\otimes \Pic(X)\xrightarrow{\mathrm{intersection}} \CCH^2(X)\to \BH^4(X,\BZ(2))
 \end{equation}
est exactement le morphisme induit par le cup-produit ci-dessus.

\subsection{Vari\'et\'es toriques}
Par \cite[Cor. 2]{Su} et \cite[Thm. 1.10]{Oda}  on a:

\begin{thm}\label{torique}
Supposons que $k$ est alg\'ebriquement clos. Soit $X$ une vari\'et\'e torique lisse sous le tore $\BG_m^n$ sur $k$.
Alors pour chaque $\BG_m^n$-orbite $Z$ de codimension $i$, il existe une sous-vari\'et\'e torique ouverte $U\sbt X$ telle que $Z\sbt U$ et $U\iso \BG_m^{n-i}\times \BA^{i}$ comme vari\'et\'e torique, o\`u l'action de $\BG_m^n$ sur $\BG_m^{n-i}\times \BA^{i}$ est la multiplication.
\end{thm}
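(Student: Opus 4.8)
The plan is to invoke the standard dictionary between toric varieties and fans. Write $N=\BZ^n$ for the cocharacter lattice of $\BG_m^n$ and $M=\Hom(N,\BZ)$ for the dual (character) lattice, with natural pairing $\langle -,-\rangle$. The smooth toric variety $X$ corresponds to a fan $\Sigma$ in $N_{\BR}:=N\otimes_{\BZ}\BR$, and smoothness of $X$ is equivalent to the condition that every cone $\sigma\in\Sigma$ is regular, i.e. the primitive generators of its rays form part of a $\BZ$-basis of $N$. The orbit--cone correspondence identifies the $\BG_m^n$-orbits of $X$ with the cones of $\Sigma$, the codimension of an orbit being equal to the dimension of the corresponding cone.

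First I would use this correspondence to attach to the given codimension-$i$ orbit $Z$ the unique cone $\sigma\in\Sigma$ of dimension $i$ with $Z=O(\sigma)$. As candidate open set I would take the affine toric variety $U:=U_{\sigma}=\Spec k[\sigma^{\vee}\cap M]$, where $\sigma^{\vee}\sbt M_{\BR}$ is the dual cone. Because $\sigma$ is a face of itself and of every cone containing it, $U_{\sigma}$ is an open toric subvariety of $X$; moreover the orbits contained in $U_\sigma$ are exactly the $O(\tau)$ with $\tau$ a face of $\sigma$, so that $Z=O(\sigma)$ is the unique closed orbit of $U_{\sigma}$ and in particular $Z\sbt U_\sigma$.

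The heart of the matter is the explicit computation of $U_{\sigma}$. Smoothness gives a $\BZ$-basis $v_1,\dots,v_n$ of $N$ whose first $i$ vectors $v_1,\dots,v_i$ are the primitive ray generators of $\sigma$. Let $v_1^{*},\dots,v_n^{*}$ be the dual basis of $M$. A character $m=\sum_j a_j v_j^{*}$ lies in $\sigma^{\vee}\cap M$ exactly when $a_1,\dots,a_i\geq 0$, with no constraint on $a_{i+1},\dots,a_n$; hence the monoid $\sigma^{\vee}\cap M$ is generated by $v_1^{*},\dots,v_i^{*}$ together with $\pm v_{i+1}^{*},\dots,\pm v_n^{*}$, so that
$$k[\sigma^{\vee}\cap M]\cong k[x_1,\dots,x_i,x_{i+1}^{\pm 1},\dots,x_n^{\pm 1}],$$
where $x_j$ corresponds to $v_j^{*}$. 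This yields a toric isomorphism $U_{\sigma}\iso \BA^{i}\times\BG_m^{n-i}$, under which the $\BG_m^n$-action becomes coordinatewise multiplication and $Z=O(\sigma)$ is the locus $\{x_1=\cdots=x_i=0\}$, i.e. the origin of $\BA^{i}$ times the full torus $\BG_m^{n-i}$.

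I expect the main obstacle to be bookkeeping rather than anything conceptual: one must check carefully that $U_{\sigma}$ is genuinely open in $X$ and contains $Z$, which rests on the face relations in $\Sigma$, and that the constructed toric isomorphism respects the prescribed $\BG_m^n$-action. All of these follow from the orbit--cone correspondence together with the affine model above, and the resulting statement is precisely what is recorded in \cite[Cor.~2]{Su} and \cite[Thm.~1.10]{Oda}.
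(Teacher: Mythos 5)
Your proof is correct and is essentially the paper's own argument: the paper simply cites \cite[Cor.~2]{Su} (which underlies the fan/orbit--cone dictionary for a normal variety with torus action) and \cite[Thm.~1.10]{Oda} (the identification $U_{\sigma}\cong \BA^{i}\times\BG_m^{n-i}$ for a regular $i$-dimensional cone $\sigma$), and your write-up is precisely the standard content of those two references. Nothing is missing; taking $U=U_{\sigma}$ for the cone $\sigma$ attached to $Z$ and computing $k[\sigma^{\vee}\cap M]$ via a $\BZ$-basis adapted to $\sigma$ is exactly the intended proof.
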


Comme   cons\'equence, on a:

\begin{cor} Sous les hypoth\`eses du Th\'eor\`eme  \ref{torique},
soient $Z_1$, $Z_2$ deux $\BG_m^n$-orbites de codimension $1$ de $X$. Alors il existe un nombre fini de $\BG_m^n$-orbites de codimension $2$, notons-les $S_j$, telles que
leurs adh\'erences sch\'ematiques satisfont
 $\overline{Z_1}\cap\overline{Z_2}=\bigcup_j\overline{S_j}$.
\end{cor}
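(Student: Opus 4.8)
The plan is to reduce the statement to a transparent computation inside the affine toric charts provided by Theorem \ref{torique}. First I would record that $\overline{Z_1}\cap\overline{Z_2}$ is a closed $\BG_m^n$-invariant subset of $X$, hence a union of orbits, and that this union is finite because a toric variety of finite type has only finitely many orbits. Every orbit $O$ occurring in $\overline{Z_1}\cap\overline{Z_2}$ has codimension at least $2$: the only codimension-$1$ orbit contained in $\overline{Z_1}$ is $Z_1$ itself, and $Z_1\neq Z_2$ forces $Z_1\not\subset\overline{Z_2}$, so no codimension-$1$ orbit lies in both closures. I would then let $S_1,\dots,S_m$ be the (finitely many) codimension-$2$ orbits contained in $\overline{Z_1}\cap\overline{Z_2}$. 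Since the intersection is closed, the inclusion $\bigcup_j\overline{S_j}\subset\overline{Z_1}\cap\overline{Z_2}$ is immediate.

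The real content is the reverse inclusion, i.e. that each orbit $O\subset\overline{Z_1}\cap\overline{Z_2}$ satisfies $O\subset\overline{S_j}$ for some $j$. This is where I would use Theorem \ref{torique}: writing $i=\codim O$, choose an open toric subvariety $U\cong\BG_m^{n-i}\times\BA^i$ with $O\subset U$, so that $O$ is the closed orbit $\BG_m^{n-i}\times\{0\}$ and the orbits of $U$ are the products $\BG_m^{n-i}\times O_J$, where $J\subset\{1,\dots,i\}$ and $O_J=\{x_l\neq 0\ (l\in J),\ x_l=0\ (l\notin J)\}$ has codimension $i-|J|$. Because $\overline{Z_1}$ is irreducible, invariant, and contains $O$, its trace $\overline{Z_1}\cap U$ is a nonempty irreducible invariant divisor of $U$, hence a single coordinate hyperplane $\{x_{l_1}=0\}$; likewise $\overline{Z_2}\cap U=\{x_{l_2}=0\}$, and $l_1\neq l_2$ since $Z_1\neq Z_2$. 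I would then exhibit the required orbit directly: put $J=\{1,\dots,i\}\setminus\{l_1,l_2\}$ and $S=\BG_m^{n-i}\times O_J$. By construction $S$ has codimension $2$ and is contained in $\{x_{l_1}=0\}\cap\{x_{l_2}=0\}\subset\overline{Z_1}\cap\overline{Z_2}$, so $S$ is one of the $S_j$; and $0\in\overline{O_J}$ in $\BA^i$ gives $O\subset\overline{S}$. This yields $\overline{Z_1}\cap\overline{Z_2}\subset\bigcup_j\overline{S_j}$, hence equality of the underlying closed sets. For the scheme-theoretic assertion I would add that, $X$ being smooth, in the chart $U$ the intersection is cut out by the two coordinates $x_{l_1},x_{l_2}$ and is therefore reduced, so it agrees with the reduced scheme $\bigcup_j\overline{S_j}$.

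The main obstacle is this reverse inclusion, and the delicate point within it is the identification of $\overline{Z_1}\cap U$ and $\overline{Z_2}\cap U$ as two \emph{distinct} coordinate hyperplanes of $U$: one must check that the trace on $U$ of an irreducible invariant divisor passing through $O$ is again an irreducible invariant divisor of $U$, and that these two traces cannot coincide (which is where $Z_1\neq Z_2$ enters). Once this is secured, the product structure $\BG_m^{n-i}\times\BA^i$ makes the location of the codimension-$2$ orbit $S$ and the containment $O\subset\overline{S}$ completely explicit.
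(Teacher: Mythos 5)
Your proof is correct, and it follows exactly the route the paper intends: the paper states this corollary without proof, as an immediate consequence of Th\'eor\`eme \ref{torique}, and your argument is precisely the natural fleshing-out of that claim via the local charts $U\cong\BG_m^{n-i}\times\BA^{i}$ (identifying the traces of $\overline{Z_1}$, $\overline{Z_2}$ with two distinct coordinate hyperplanes, producing the codimension-$2$ orbit through $O$, and noting that $V(x_{l_1},x_{l_2})$ is reduced so the equality is scheme-theoretic).
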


\subsection{Vari\'et\'es cellulaires}
 
\begin{defi}[{\cite[D\'efinition 3.2]{K97}}]{\rm
Une vari\'et\'e $X$ sur $k$ a une d\'ecomposition cellulaire (bri\`evement: est cellulaire) si elle est vide ou s'il existe un sous-ensemble ferm\'e propre $Z\sbt X$ tel que $X\setminus Z$ soit isomorphe \`a un espace affine et $Z$ ait une d\'ecomposition cellulaire.}
\end{defi}

\begin{prop}[{\cite[Prop. 2.2]{C2}}] \label{exam1}
 Soit $k$  un corps alg\'ebriquement clos.
\begin{itemize}
\item[\rm (1)] Une surface projective, lisse, $k$-rationnelle est cellulaire.
\item[\rm (2)] (\cite[Lemme, p. 103]{Ful93}) Une vari\'et\'e torique, projective, lisse sur $k$ est cellulaire.
\item[\rm (3)] Soient $T$ un tore sur $k$ et $T^c$ une $T$-vari\'et\'e torique,  projective, lisse.
Soient $X$ une vari\'et\'e cellulaire sur $k$ et  $Y\ra X$ un $T$-torseur.
 Alors $Y^c:=Y\times^TT^c$ est cellulaire.
\end{itemize}
\end{prop}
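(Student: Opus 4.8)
The plan is to prove the three assertions in turn, after isolating two elementary stability properties of cellular varieties that follow directly from the recursive definition. \emph{Gluing Lemma}: if $M$ is a $k$-variety, $A\sbt M$ is open and cellular, and $B:=M\setminus A$ is closed and cellular, then $M$ is cellular. One argues by induction on the number of cells of $A$: let $C\cong\BA^d$ be the top open cell of $A$, so that $C$ is open in $M$; then $A\setminus C$ is open and cellular with fewer cells in $M\setminus C$ while $B$ stays closed and cellular, so $M\setminus C$ is cellular by induction, whence $M$ is cellular since $M\setminus(M\setminus C)=C\cong\BA^d$. \emph{Cellular Fibration Lemma}: if $f\colon Y\to X$ has $X$ cellular and $f^{-1}(C)\sq C\times F$ for every cell $C$ of $X$, with $F$ a fixed cellular variety, then $Y$ is cellular. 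Indeed $\BA^d\times F$ is cellular (multiply each cell of $F$ by $\BA^d$), and one combines this with the Gluing Lemma along the cellular filtration of $X$: the preimage of the big cell is the open cellular piece, and the preimage of the rest is closed and cellular by induction.

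For assertion (2) I would invoke Fulton \cite{Ful93}, or equivalently produce the Bialynicki--Birula decomposition attached to a generic one-parameter subgroup of the torus, whose attracting sets at the (isolated) fixed points are affine spaces assembling into a cellular decomposition. For assertion (3), since $k=\bk$ the torus $T$ is split, so a $T$-torsor is Zariski-locally trivial, being a product of $\BG_m$-torsors, each the complement of the zero section of a line bundle. Over a cell $C\cong\BA^d$ one has $\Pic(\BA^d)=0$, hence $Y|_C$ is the trivial torsor and $Y|_C\sq C\times T$; therefore $Y^c|_C=(Y|_C)\times^T T^c\sq C\times T^c$. As $T^c$ is cellular by (2), the projection $Y^c\to X$ satisfies the hypotheses of the Cellular Fibration Lemma, and $Y^c$ is cellular.

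Assertion (1) is the delicate one. By the minimal model theory of surfaces over $\bk$, the surface $X$ is obtained from a relatively minimal rational surface $X_0$ by a finite chain of blow-ups at closed points, and $X_0$ is either $\BP^2$ or a Hirzebruch surface $\BF_m$; in all cases $X_0$ is toric, hence cellular by (2). Thus it suffices to establish a \emph{Blow-up Lemma}: the blow-up $\pi\colon\mathrm{Bl}_pS\to S$ of a smooth cellular surface $S$ at a closed point $p$ is cellular. Write $S=C_0\sqcup Z_0$ with $C_0\cong\BA^2$ the big cell (open) and $Z_0$ its cellular complement, a tree of rational curves. If $p\in C_0$, then $\pi^{-1}(C_0)=\mathrm{Bl}_0\BA^2$ is the total space of $\CO_{\BP^1}(-1)$, a line bundle over $\BP^1$ and so cellular, while $\pi^{-1}(Z_0)\cong Z_0$ is unchanged. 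If $p\in Z_0$, then $C_0\cong\BA^2$ survives as an open cell while $\pi^{-1}(Z_0)$ is the union of the strict transform of $Z_0$ with the exceptional curve $E\cong\BP^1$. In either case one piece is the line-bundle (or preserved affine) part, and the other is a union of rational curves whose dual graph is the tree $Z_0$, possibly with the new vertex $E$ appended at a leaf (if $p$ is a smooth point of $Z_0$) or subdividing an edge (if $p$ is a node); this remains a tree and hence cellular, so the Gluing Lemma yields that $\mathrm{Bl}_pS$ is cellular.

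The main obstacle is precisely this configuration analysis in the Blow-up Lemma: one must carry, as an inductive invariant through the chain of blow-ups, that the cellular boundary $Z_0$ is a tree of smooth rational curves meeting transversally, so that inserting the exceptional $\BP^1$ can never create a cycle (which would produce nonvanishing $H^1$ and destroy cellularity). Verifying that blowing up a point of such a tree only appends or subdivides an edge, and that the resulting closed curve is genuinely cellular and glues correctly to the preserved big cell, is where the real work lies; the remaining steps for (1), (2) and (3) are then formal consequences of the two stability lemmas.
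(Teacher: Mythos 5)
Your two stability lemmas (gluing and cellular fibration) are correct, and with them your treatments of (2) and (3) are complete; note that the paper itself offers no proof at all for this proposition (it cites \cite[Prop.~2.2]{C2}, and \cite{Ful93} for (2)), so the only question is whether your argument closes, and the only delicate point is (1). There the gap is real, and it is not where you locate it. Your induction along the chain of blow-ups carries the invariant that the boundary $Z_0$ (the complement of the big cell) is a tree of smooth rational curves meeting transversally, and you only discuss preserving it for blow-ups centered on $Z_0$. But when $p\in C_0$, the decomposition produced by your gluing argument has a strictly larger boundary: the new big cell is the complement in $\mathrm{Bl}_pC_0\cong \mathrm{Bl}_0\BA^2$ (total space of $\CO_{\BP^1}(-1)$) of one fibre $L_\infty$, so the complement of the new big cell is $Z_0\cup\overline{L_\infty}$, where $\overline{L_\infty}$ is the closure in $\mathrm{Bl}_pS$ of the strict transform of a ``line'' of $C_0$ through $p$. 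Which curves are lines depends on the auxiliary isomorphism $C_0\cong\BA^2$, whose behaviour near $Z_0$ you never control: for $S=\BP^2$ with $Z_0$ the line at infinity, composing the standard chart with $(u,v)\mapsto(u,v+u^3)$ turns the line $v=0$ into $y=x^3$, whose closure is the cuspidal cubic $yz^2=x^3$ with its cusp lying on $Z_0$; composing with a second triangular automorphism in the other variable, one can arrange that no line through $p$ closes up to a curve both smooth and transverse to $Z_0$. So after one blow-up centered in the big cell your invariant is destroyed, and the next center of the minimal-model chain (which you do not get to choose) may lie on the boundary, where your tree analysis no longer applies. Note that in the example $Z_0\cup\overline{L_\infty}$ is still cellular (a cuspidal cubic minus its cusp is $\BA^1$): it is your invariant, not cellularity, that breaks, which signals that the invariant is the wrong tool.

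The repair is to drop the invariant entirely and prove: if $Z\subsetneq S$ is a closed cellular subvariety of a smooth surface and $p\in Z$, then the total transform $\pi^{-1}(Z)\sbt\mathrm{Bl}_pS$ is cellular. Induct on the cells of $Z$ itself, writing $Z=C\sqcup Z'$ with $C$ the big cell. If $p\notin C$, then $\pi^{-1}(C)\cong C$ is the new big cell and $\pi^{-1}(Z')$ is cellular by induction. If $p\in C$, then $Z$ is smooth at $p$, because $C$ is smooth and open in $Z$; hence the exceptional curve $E$ meets the strict transform $\tilde{Z}$ in the single point $q$ given by the tangent direction, $E\setminus\{q\}\cong\BA^1$ is open in $\pi^{-1}(Z)$ with closed complement $\tilde{Z}$, and $\tilde{Z}=\mathrm{Bl}_pZ\cong Z$ is cellular, since a smooth point is a Cartier divisor on the curve $Z$. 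All the local types you were worried about (cusps, tangencies, several branches, triple points) are bypassed: a point lying in a cell is automatically a smooth point of $Z$, and every other point is pushed down the induction. With this lemma, your two cases for $\mathrm{Bl}_pS$ (for $p\in C_0$ glue $\mathrm{Bl}_pC_0$ with $Z_0$; for $p\in Z_0$ glue $C_0$ with $\pi^{-1}(Z_0)$) close the induction for (1) with no configuration analysis and no invariant to carry.
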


Rappelons que $Y\times^TT^c$ est le quotient $(Y\times T^c)/T$, 
o\`u l'action de $T$ est d\'efini par $t\cdot (y,a):= (t\cdot y, t^{-1}\cdot a)$
pour tous $t\in T$, $y\in Y$ et $a\in T^c$.
Ce quotient existe par \cite[Thm. 4.19]{PV} et \cite[Appendice 1, Thm. 6]{An73}.

Par \cite[Exemple 19.1.11]{Ful}, on a

\begin{thm}\label{cellulairecoho}
Supposons que $k$ est alg\'ebriquement clos. Soient $X$ une vari\'et\'e lisse cellulaire sur $k$ et $n$ un entier.
Pour tout entier $i$, le groupe $\CCH^i(X)$ est de type fini et sans torsion, et le
morphisme classe de cycle $\CCH^i(X)\otimes \BZ/n\ra H^{2i}(X,\BZ/n)$ est un isomorphisme.
Pour tout entier $i$ impair, on  a $H^i(X,\BZ/n)=0 $ et donc $H^{i}(X,\BZ_{l})=0$ pour tout nombre premier $l$.
\end{thm}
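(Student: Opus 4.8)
The plan is to prove the two structural assertions — freeness of the Chow groups and bijectivity of the cycle class map together with the vanishing of the odd cohomology — simultaneously, by induction on the length of a cellular filtration, but phrased \emph{homologically} so that the (possibly singular) lower strata cause no trouble. Unwinding the recursive definition produces a filtration $X = X_0 \supset X_1 \supset \cdots \supset X_{r+1} = \emptyset$ by closed subvarieties with each successive open stratum $C_t := X_{t-1}\setminus X_t \iso \BA^{d_t}$ an affine space; in particular there are finitely many cells, so all groups in sight will be finitely generated. I would work throughout with Chow homology $\CCH_\bullet$ and $\ell$-adic (resp. $\BZ/n$) Borel--Moore \'etale homology $H^{\mathrm{BM}}_\bullet$, both covariant for closed immersions, both carrying the expected localization sequences, and — crucially — both defined without any smoothness hypothesis on the strata. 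Only at the very end would I invoke Poincar\'e duality on the smooth $X$ to convert the homological conclusions into the cohomological statement asserted.

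The base case is $X = \BA^m$: homotopy invariance of Chow groups gives $\CCH_j(\BA^m) = \BZ$ for $j = m$ and $0$ otherwise, while homotopy invariance of \'etale cohomology over the algebraically closed $k$ gives $H^{\mathrm{BM}}_i(\BA^m, \BZ/n) = \BZ/n$ for $i = 2m$ and $0$ otherwise, so everything sits in a single even degree and the cycle class map is the evident isomorphism. For the inductive step I set $U := C_1 = X \setminus X_1$ (open) and $Z := X_1$ (closed, cellular, of shorter filtration). The Borel--Moore localization sequence reads
\[
\cdots \to H^{\mathrm{BM}}_i(Z, \BZ/n) \to H^{\mathrm{BM}}_i(X, \BZ/n) \to H^{\mathrm{BM}}_i(U, \BZ/n) \to H^{\mathrm{BM}}_{i-1}(Z, \BZ/n) \to \cdots,
\]
and since $H^{\mathrm{BM}}_\bullet(U)$ is concentrated in the even degree $2m$ while $H^{\mathrm{BM}}_\bullet(Z)$ is concentrated in even degrees by the inductive hypothesis, every connecting map vanishes for parity reasons. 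This forces $H^{\mathrm{BM}}_{\mathrm{odd}}(X) = 0$ and yields short exact sequences $0 \to H^{\mathrm{BM}}_{2j}(Z) \to H^{\mathrm{BM}}_{2j}(X) \to H^{\mathrm{BM}}_{2j}(U) \to 0$. Tracking classes then shows that the cell closures $[\overline{C_t}]$ of dimension $j$ form a $\BZ/n$-basis of $H^{\mathrm{BM}}_{2j}(X,\BZ/n)$: the top class $[\overline U]$ restricts to the generator on $U$, whereas the classes with $t \geq 2$ lie in $Z$ and are a basis there by induction.

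For the Chow groups I run the parallel induction using the right-exact sequence $\CCH_j(Z) \to \CCH_j(X) \to \CCH_j(U) \to 0$. Here $\CCH_j(U) = \BZ\cdot[U]$ for $j = m$ and vanishes otherwise, and $[U]$ lifts to $[\overline U]$; combined with the inductive description of $\CCH_\bullet(Z)$ this shows the cell closures $[\overline{C_t}]$ \emph{generate} $\CCH_\bullet(X)$. The step where right-exactness alone is insufficient — and what I regard as the main obstacle — is proving there are \emph{no relations}, i.e. that $\CCH_\bullet(X)$ is free of the expected rank and torsion-free (equivalently, that the left-hand map above is injective). I would import this from the cohomological side: applying the integral cycle class map into $H^{\mathrm{BM}}_\bullet(X, \BZ_\ell)$ and using that, by the same induction run with $\BZ_\ell$-coefficients, the generators $[\overline{C_t}]$ map to a $\BZ_\ell$-basis of $H^{\mathrm{BM}}_\bullet(X,\BZ_\ell)$, hence are linearly independent over $\BZ$. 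A generating set mapping to a basis is itself a free basis, so $\CCH_\bullet(X)$ is free of finite rank and torsion-free.

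It then remains to assemble the conclusion. Both $\CCH_j(X)\otimes\BZ/n$ and $H^{\mathrm{BM}}_{2j}(X,\BZ/n)$ are free $\BZ/n$-modules with bases indexed by the $j$-dimensional cells, and the cycle class map carries one basis onto the other, hence is an isomorphism, while $H^{\mathrm{BM}}_{\mathrm{odd}}(X) = 0$ is already established. Since $X$ is smooth of dimension $d$, the duality $H^{\mathrm{BM}}_i(X,\BZ/n) \iso H^{2d-i}(X,\BZ/n)$ (the Tate twist being irrelevant over the algebraically closed $k$) together with $\CCH_j(X) = \CCH^{d-j}(X)$ transports these statements to the asserted isomorphism $\CCH^i(X)\otimes\BZ/n \iso H^{2i}(X,\BZ/n)$ and to $H^{2i+1}(X,\BZ/n) = 0$; passing to the limit over $n = \ell^m$ gives $H^{2i+1}(X,\BZ_\ell) = 0$. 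The only genuinely delicate bookkeeping, beyond the parity argument, is the torsion-freeness of the Chow groups, for which the interplay between the two localization sequences through the cycle class map is essential.
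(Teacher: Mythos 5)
Your proof is correct. It is worth saying up front that the paper gives no argument at all for this statement: it is quoted directly from Fulton, \emph{Intersection theory}, Example 19.1.11 (together with Example 1.9.1, which is exactly your ``cell closures generate $\CCH_\bullet$'' step). What you have written is essentially a self-contained reconstruction of the argument behind that citation, and every step checks out: the unwinding of the recursive definition into a filtration with affine-space strata, the two localization sequences, the parity vanishing of the connecting maps, the identification of the cell closures as a basis on the homological side, and the lemma that a generating set of an abelian group mapping to a $\BZ_\ell$-linearly independent family is a free basis. The genuine difference from the paper's route is one of setting: Fulton's example is stated over $\BC$ for singular Borel--Moore homology with integral coefficients, so invoking it in the paper's context (\'etale cohomology over an arbitrary algebraically closed field of characteristic zero) tacitly requires the comparison theorems or the Lefschetz principle. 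Your version avoids that transfer entirely by running the induction directly in \'etale Borel--Moore homology, at the cost of replacing Fulton's integral coefficients by $\BZ_\ell$-coefficients to certify freeness and torsion-freeness of the Chow groups --- which is exactly the right substitute, since it is the linear independence of the generators, not integrality of the ambient theory, that the argument needs. The only points you should flag as inputs rather than prove are standard: the existence of \'etale Borel--Moore homology with localization sequences, fundamental classes of (possibly singular) closed subvarieties, and a cycle class map compatible with proper pushforward and restriction to opens (e.g.\ defined via $Rf^!$), the exactness of the $\BZ_\ell$-coefficient sequences as inverse limits (available here by finiteness of all groups involved), and the compatibility of the Borel--Moore cycle class with the cohomological one under Poincar\'e duality on the smooth $X$. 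Granting these, your proof is complete and, unlike the bare citation, actually covers the setting in which the paper uses the theorem.
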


Soit $X$ une vari\'e\'te lisse. On a un homomorphisme naturel:
\begin{equation}\label{H1K2controle}
\Pic (X)\otimes k^{\times}\to H^1(X,\CK_2).
\end{equation}
En effet, on a le diagramme commutatif suivant
$$ \xymatrix{k(X)^{\times}\otimes k^{\times}\ar[r]\ar[d] &\oplus_{x\in X^{(1)}}k^{\times}\ar[r]\ar[d]^{\phi_0}& \Pic(X)\otimes k^{\times} \ar[r]&0\\
K_2k(X)\ar[r]^{d_2} & \oplus_{x\in X^{(1)}}k(x)^{\times}\ar[r]^{d_1} & \oplus_{x\in X^{(2)}}\BZ&,
}  $$
o\`u la premi\`ere ligne est obtenue \`a partir de la suite exacte d\'efinissant le groupe $\Pic(X)$ par tensorisation avec $k^{\times}$ et la deuxi\`ere ligne est la r\'esolution de Gersten.
On v\'erifie que la compos\'e $d_1\circ \phi_0$ vaut z\'ero, ce qui permet de d\'efinir l'homomorphisme (\ref{H1K2controle}) par chasse au diagramme.

\begin{prop}\label{H1K2control}
Supposons que $k$ est alg\'ebriquement clos. Soit $X$ une vari\'et\'e lisse, connexe, rationnelle sur $k$. 
Supposons que $X$ est projective ou cellulaire.
Alors l'homomorphisme (\ref{H1K2controle}) est un isomorphisme.
\end{prop}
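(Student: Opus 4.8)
Le plan est de traiter les deux hypoth\`eses s\'epar\'ement. Je commencerais par le cas cellulaire, qui fournit en particulier le cas de l'espace projectif $\BP^n_k$; puis je ram\`enerais le cas projectif g\'en\'eral \`a celui de $\BP^n_k$. La caract\'eristique nulle de $k$ autorise l'emploi du th\'eor\`eme de factorisation faible, qui est l'ingr\'edient clef de cette r\'eduction. Dans les deux situations, l'id\'ee directrice est que, pour de telles vari\'et\'es, le groupe $H^1(X,\CK_2)$ ne poss\`ede aucune partie ``transcendante'' et est enti\`erement engendr\'e par les points de codimension $1$, de sorte qu'il co\"incide avec l'image de $\Pic(X)\otimes k^{\times}$.

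\emph{Cas cellulaire.} Je proc\'ederais par r\'ecurrence sur la dimension \`a l'aide de la suite exacte de localisation en $\CK$-cohomologie, dans l'esprit de la preuve du Th\'eor\`eme~\ref{cellulairecoho}. On \'ecrit $X$ comme r\'eunion d'une cellule ouverte $U\cong \BA^{n}$ et d'un ferm\'e cellulaire de codimension $\geq 1$, et l'on filtre $X$ par la dimension des cellules; chaque strate localement ferm\'ee est une r\'eunion disjointe d'espaces affines, donc lisse. L'invariance par homotopie de la $K$-th\'eorie donne $H^p(\BA^m,\CK_q)=K_q(k)$ si $p=0$ et $0$ sinon, et la puret\'e pour les strates lisses permet de reconstituer par d\'evissage la $\CK_2$-cohomologie de $X$. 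On obtient ainsi un isomorphisme
\begin{equation*}
H^1(X,\CK_2)\cong \bigoplus_{i}k^{\times},
\end{equation*}
o\`u la somme porte sur les cellules de codimension $1$, l'isomorphisme \'etant donn\'e par les points g\'en\'eriques de leurs adh\'erences via la r\'esolution de Gersten (\ref{resolutiongersten}). Comme $\Pic(X)$ est libre de base ces m\^emes adh\'erences (Th\'eor\`eme~\ref{cellulairecoho}), on a $\Pic(X)\otimes k^{\times}\cong\bigoplus_i k^{\times}$, et la description explicite de (\ref{H1K2controle}) montre que le morphisme envoie la base \'evidente du membre de gauche sur celle du membre de droite; c'est donc un isomorphisme.

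\emph{Cas projectif.} Soit $X$ lisse, projective et rationnelle. Par le th\'eor\`eme de factorisation faible, il existe une cha\^ine de vari\'et\'es lisses projectives reliant $X$ \`a $\BP^n_k$, deux termes cons\'ecutifs \'etant li\'es par un \'eclatement de centre lisse. Il suffit donc de voir que la propri\'et\'e ``(\ref{H1K2controle}) est un isomorphisme'' est invariante par \'eclatement de centre lisse. Soit $\pi:\widetilde X\to X$ l'\'eclatement d'un ferm\'e lisse connexe $Y$ de codimension $c\geq 2$, de diviseur exceptionnel $E$. La formule d'\'eclatement en $\CK$-cohomologie donne
\begin{equation*}
H^1(\widetilde X,\CK_2)\cong H^1(X,\CK_2)\oplus\bigoplus_{j=1}^{c-1}H^{1-j}(Y,\CK_{2-j})=H^1(X,\CK_2)\oplus k^{\times},
\end{equation*}
car seul le terme $j=1$, \'egal \`a $H^0(Y,\CK_1)=k^{\times}$, est non nul. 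Parall\`element, $\Pic(\widetilde X)=\pi^{*}\Pic(X)\oplus\BZ[E]$, d'o\`u $\Pic(\widetilde X)\otimes k^{\times}\cong(\Pic(X)\otimes k^{\times})\oplus k^{\times}$. En suivant le diviseur $E$ et son point g\'en\'erique, on v\'erifie que (\ref{H1K2controle}) respecte ces d\'ecompositions, le nouveau facteur $\BZ[E]\otimes k^{\times}$ s'envoyant isomorphiquement sur le nouveau facteur $k^{\times}$. Ainsi (\ref{H1K2controle}) est un isomorphisme pour $\widetilde X$ si et seulement s'il en est un pour $X$; comme c'est le cas pour $\BP^n_k$ par le cas cellulaire, on conclut de proche en proche. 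Si $Y$ n'est pas connexe, on remplace $k^{\times}$ par une somme index\'ee par ses composantes, en accord avec les composantes de $E$.

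\emph{Obstacle principal.} Le point d\'elicat n'est pas l'existence des isomorphismes abstraits --- formule d'\'eclatement et d\'evissage cellulaire sont standards --- mais la v\'erification de leur compatibilit\'e avec le morphisme pr\'ecis (\ref{H1K2controle}), c'est-\`a-dire le suivi des g\'en\'erateurs le long de la r\'esolution de Gersten et de la formule d'\'eclatement. C'est cette comptabilit\'e, jointe \`a l'obtention de la formule d'\'eclatement au niveau des faisceaux $\CK_q$ (et non seulement de la $K$-th\'eorie globale), qui constitue l'essentiel du travail; on pourrait aussi, pour le cas projectif, invoquer les r\'esultats connus de Colliot-Th\'el\`ene et Raskind sur $H^1(X,\CK_2)$. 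La raison conceptuelle sous-jacente est l'absence de classes ``transcendantes'' dans $H^1(X,\CK_2)$ pour ces vari\'et\'es, qui provient in fine de l'invariance par homotopie de la $K$-th\'eorie sur les cellules affines.
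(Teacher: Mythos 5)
Votre d\'emonstration est correcte dans ses grandes lignes, mais elle suit une route sensiblement diff\'erente de celle de l'article, surtout pour le cas projectif. Pour le cas cellulaire, l'article ne fait pas de d\'evissage complet par r\'ecurrence sur les strates : il n'utilise que la suite de localisation du grand ouvert $U\cong \BA^n$, de compl\'ementaire $Z$, laquelle donne $H^1(X,\CK_2)\cong \ker(\psi)$ o\`u $\psi:\oplus_{x\in Z^{(0)}}k(x)^{\times}\to \oplus_{x\in Z^{(1)}}\BZ$ est le morphisme diviseur; le point clef est l'identification directe $\ker(\psi)=\oplus_{x}k^{\times}\cong \Div_Z(X)\otimes k^{\times}\cong \Pic(X)\otimes k^{\times}$, obtenue en remarquant que chaque point de codimension $1$ de $X$ situ\'e dans $Z$ poss\`ede une cellule dense $V_x\cong \BA^{n-1}$ dans son adh\'erence, sur laquelle une fonction sans diviseur est constante. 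Votre r\'ecurrence aboutit au m\^eme r\'esultat, mais elle oblige \`a manipuler la cohomologie \`a supports dans des strates ferm\'ees \'eventuellement singuli\`eres, ce que l'argument de l'article contourne en restant au niveau des complexes de Gersten. Pour le cas projectif, la diff\'erence est r\'eelle : l'article cite simplement Pirutka \cite[Prop. 2.6]{Pi} (dont la preuve repose sur des r\'esultats de Colliot-Th\'el\`ene et Raskind, l'alternative que vous mentionnez vous-m\^eme), tandis que vous le d\'eduisez du cas cellulaire de $\BP^n_k$ par factorisation faible et formule d'\'eclatement pour $H^1(-,\CK_2)$. Votre approche a l'avantage d'\^etre autonome modulo deux \'enonc\'es standards (la factorisation faible en caract\'eristique nulle, et la formule d'\'eclatement, que l'on peut \'etablir par puret\'e et comparaison des suites de localisation de $(X,Y)$ et $(\widetilde X,E)$, ou via le triangle d'\'eclatement en cohomologie motivique); en contrepartie, elle mobilise des outils plus lourds et exige les v\'erifications de compatibilit\'e que vous signalez (fonctorialit\'e par image r\'eciproque du produit $\Pic(X)\otimes k^{\times}=H^1(X,\CK_1)\otimes k^{\times}\to H^1(X,\CK_2)$, et identification du facteur $\BZ[E]\otimes k^{\times}$ avec l'image de Gysin de $H^0(Y,\CK_1)=k^{\times}$); ces v\'erifications sont de routine mais n\'ecessaires pour que l'\'equivalence ``isomorphisme pour $\widetilde X$ si et seulement si isomorphisme pour $X$'' soit l\'egitime.
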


\begin{proof}
Si $X$ est projective, ceci r\'esulte de A. Pirutka \cite[Prop. 2.6]{Pi}.

Si $X$ est cellulaire, on fixe une d\'ecomposition cellulaire de $X$.
Soient $n=\dim (X)$ et $U\sbt X$ l'ouvert isomorphe \`a $\BA^n$ dans la d\'ecomposition cellulaire.
Notons $Z:=X\setminus U$.
On a  $H^0(U,\CK_2)=K_2(k)$, $H^1(U,\CK_2)=0$ et $\Pic(X)\cong \Div_{Z}(X)$.
Par la r\'esolution de Gersten (\ref{resolutiongersten}), on a une suite exacte:
$$0\to H^0(X,\CK_2)\to H^0(U,\CK_2) \to \ker (\psi)\xrightarrow{\phi_1} H^1(X,\CK_2)\to H^1(U,\CK_2)=0,$$
o\`u 
$\psi:  \oplus_{x\in Z^{(0)} }k(x)^{\times}\xrightarrow{div} \oplus_{x\in Z^{(1)}}\BZ .$
Alors $H^0(X,\CK_2)\cong H^0(U,\CK_2)  $ et $\phi_1$ est un isomorphisme.

Puisque $div(k^\times)=0$, on a $\oplus_{x\in Z^{(0)}}k^{\times}\sbt \ker (\psi)$.
Pour tout $x\in Z^{(0)}$, il existe une unique sous-vari\'et\'e r\'eduite localement ferm\'ee $V_x$ de codimension $1$ dans la d\'ecomposition cellulaire
telle que $x\in V_x$, $V_x\cong \BA^{n-1}$ et $V_x\cap V_{x'}=\emptyset$ pour $x\neq x'\in Z^{(0)}$. 
Donc 
$$k^{\times}\cong \ker (k(x)^{\times}\xrightarrow{div} \oplus_{y\in V_x^{(1)}}\BZ)\ \ \ \text{et}\ \ \  \ker (\psi)\sbt \oplus_{x\in Z^{(0)}}k^{\times}.$$
Alors $\ker (\psi)\cong \oplus_{x\in Z^{(0)}}k^{\times} \cong \Div_Z(X)\otimes  k^{\times}\cong  \Pic(X)\otimes  k^{\times}  $.
Puisque $\phi_0$ et $\phi_1$ sont induits par les inclusions $\{k^{\times}\sbt k(x)^{\times}\}_{x\in X^{(1)}}$, on a $\phi_0=\phi_1$ et le r\'esultat en d\'ecoule.
\qed
\end{proof}

\subsection{Torseurs universels}

 Soient $X$ une $k$-vari\'et\'e lisse, $T$ un $k$-tore et $\CT\to X$ un $T$-torseur.
 La composition 
$H^1(X,T) \to H^1(X_{\bk},T_{\bk}) \to Hom(T^*, \Pic(X_{\bk}))$
associe \`a   $\CT\to X$ un homomorphisme galoisien $T^*\xrightarrow{type}\Pic(X_{\bk})$,
appel\'e \emph{le type du torseur}. Lorsque le type est un isomorphisme,
on dit \cite{CTS} que $\CT \to X$ est \emph{un torseur universel} sur $X$.

\begin{thm}\label{torsoruniv}
Soit $X$ une $k$-vari\'et\'e lisse g\'eom\'e\-triquement int\`egre, avec $\bk^{\times}  \cong  \bk[X]^{\times} $ et $\Pic (X_{\bk})$ de type fini et sans torsion. 
Soit $\CT \ra X$ un  torseur universel. 
Soit $T^c$ une $T$-vari\'et\'e torique, projective,  lisse.
Soit  $\CT^{c}= \CT\times^TT^c$.

On a les propri\'et\'es suivantes :
\begin{itemize}
\item[\rm (i)] {\rm \cite[Thm. 2.1.2]{CTS}} $ \bk^{\times}  \cong\bk[\CT]^{\times}$.

\item[\rm (ii)] {\rm \cite[Thm. 2.1.2]{CTS}}  $\Pic (\CT_{\bk})\cong 0$.

\item[\rm (iii)] {\rm \cite[Thm. 2.1.2]{CTS}}  On a un isomorphisme de $\Gamma_k$-modules  $\ \Div_{\CT^c-\CT} (\CT^c_{\bk})  \cong  \Pic (\CT^c_{\bk})$.
En particulier $\Pic (\CT^c_{\bk})$ est un module de permutation de type fini.
 
\item[\rm (iv)] {\rm \cite[Thm. 1.6]{HS}}  L'application naturelle  $\Br(X_{\bk}) \to \Br(\CT_{\bk})$ est un isomorphisme. 
 
\item[\rm (v)] {\rm \cite[Rem. 2.8.4]{CTS}} Pour toute extension $K/k$ de corps, si $X_K$ est projective et $K$-rationnelle, alors  $\CT_K$ est stablement $K$-rationnelle. 
  Si de plus $K={\overline k}$, alors $\CT_K$ est rationnelle.
\end{itemize}
\end{thm}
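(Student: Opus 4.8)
The plan is to deduce all five assertions from a single tool, the fundamental exact sequence attached to a torsor under a torus. Over $\bk$ the torus $T$ is split, and for the $T_{\bk}$-torsor $\CT_{\bk}\to X_{\bk}$ one has, because $X_{\bk}$ is smooth with $\bk[X_{\bk}]^\times=\bk^\times$ and $\Pic(X_{\bk})$ finitely generated, the exact sequence of $\Gamma_k$-modules of Colliot-Th\'el\`ene--Sansuc
\begin{equation*}
0\to \bk^\times \to \bk[\CT_{\bk}]^\times \to T^*\xrightarrow{type}\Pic(X_{\bk})\to \Pic(\CT_{\bk})\to 0,
\end{equation*}
in which the connecting map is exactly the type of the torsor (surjectivity onto $\Pic(\CT_{\bk})$ holds because the geometric fibres are split tori with trivial Picard group). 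By hypothesis the type is an isomorphism, so I would read off (i): the kernel of $type$ is zero, whence $\bk[\CT_{\bk}]^\times=\bk^\times$; and (ii): the cokernel of $type$ is zero, whence $\Pic(\CT_{\bk})=0$. Both are immediate once the sequence is in hand.

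For (iii) I would pass to the open immersion $\CT_{\bk}\hookrightarrow \CT^c_{\bk}$, whose complement is the fibrewise toric boundary $\CT_{\bk}\times^{T_{\bk}}(T^c_{\bk}\setminus T_{\bk})$, a divisor of pure codimension $1$ whose irreducible components are indexed by the rays of the fan of $T^c$ and therefore form a finite $\Gamma_k$-permutation set. The fibration $\CT^c\to X$ is proper with geometrically connected fibres on which regular functions are constant, so $\bk[\CT^c_{\bk}]^\times=\bk^\times$. Feeding $\bk[\CT^c_{\bk}]^\times=\bk[\CT_{\bk}]^\times=\bk^\times$ and $\Pic(\CT_{\bk})=0$ into the localization sequence
\begin{equation*}
\bk[\CT^c_{\bk}]^\times\to \bk[\CT_{\bk}]^\times\to \Div_{\CT^c-\CT}(\CT^c_{\bk})\to \Pic(\CT^c_{\bk})\to \Pic(\CT_{\bk})\to 0
\end{equation*}
makes the function-to-divisor map vanish and collapses the sequence to the claimed $\Gamma_k$-isomorphism $\Div_{\CT^c-\CT}(\CT^c_{\bk})\iso\Pic(\CT^c_{\bk})$; since the left-hand group is free on the boundary components it is a finitely generated permutation module, which yields the last sentence of (iii).

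For (v) with $K=\bk$ I would use that over an algebraically closed field the split torsor $\CT_{\bk}\to X_{\bk}$ is Zariski-locally trivial by Hilbert~90, hence contains a dense open isomorphic to $\BA^n\times\BG_m^n$ lying over a rational cell of $X_{\bk}$, and is therefore rational. The stable rationality for a general $K$ with $X_K$ projective and $K$-rational is genuinely more delicate, since there $T$ need not be split and the torsor need not be locally trivial; here I would invoke the no-name/descent argument of Colliot-Th\'el\`ene--Sansuc showing that a torsor under a torus over a stably rational base is stably rational, rather than reprove it.

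I expect (iv) to be the main obstacle. The isomorphism $\Br(X_{\bk})\to\Br(\CT_{\bk})$ does \emph{not} follow from the elementary sequence above: it requires analysing the Leray spectral sequence of $\CT_{\bk}\to X_{\bk}$, where one must combine homotopy invariance of the Brauer group along the $\BG_m$-factors of the (locally trivial) fibration with the vanishing $\Pic(\CT_{\bk})=0$ and $\bk[\CT_{\bk}]^\times=\bk^\times$ to kill the transgression and the low-degree terms. This is precisely the content I would take from Harari--Skorobogatov rather than reconstruct here.
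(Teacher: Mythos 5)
Your proposal is correct; the comparison is lopsided, however, because the paper gives no proof of this statement at all --- it appears in the section of \emph{rappels}, with every part cited to the literature: (i)--(iii) and (v) to \cite[Thm. 2.1.2 and Rem. 2.8.4]{CTS}, and (iv) to \cite[Thm. 1.6]{HS}. What you have written is essentially a reconstruction of the arguments behind those citations. Your deduction of (i) and (ii) from the exact sequence $0\to \bk^\times\to\bk[\CT_{\bk}]^\times\to T^*\xrightarrow{type}\Pic(X_{\bk})\to\Pic(\CT_{\bk})\to 0$ is exactly the mechanism of the cited theorem, and this sequence is the one the paper itself uses elsewhere (cf. \eqref{e97}, from \cite[Prop. 1.4.2]{CTS}, and the Sansuc sequence invoked in the proof of the Lemme \ref{codim1}); one small point to tighten is the surjectivity on the right, which is cleanest as the tail of Sansuc's sequence, ending in $\Pic(T_{\bk})=0$ for the split torus, rather than as an assertion about fibres. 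Your localization argument for (iii) is likewise correct, granted the standard toric facts that $T^c_{\bk}\setminus T_{\bk}$ is a union of divisors indexed by the rays of the fan and permuted by $\Gamma_k$, and that each component $\CT_{\bk}\times^{T_{\bk}}\overline{Z}$ is irreducible --- it is a torsor under a quotient torus over $X_{\bk}$, exactly as in the paper's Lemme \ref{codim1}. Your treatment of (v) over $\bk$ coincides with the paper's own argument in the Corollaire \ref{H0K2UD} (Zariski-local triviality via $H^1_{\mathrm{Zar}}(-,\BG_m)=H^1(-,\BG_m)$), and your decision to quote Harari--Skorobogatov for (iv) and Colliot-Th\'el\`ene--Sansuc for stable rationality in (v) matches the paper's citations; identifying (iv) as the one part that cannot be extracted from the elementary exact sequence is accurate. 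In short: where the paper cites, you prove, correctly; where a genuinely harder input is needed, you cite the same sources the paper does.
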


Par la suite spectrale de Hochschild-Serre, on a: $k^{\times}\cong k[\CT]^{\times}$, $\Pic(\CT)=0$ et $\Br(k)\cong \Br_1(\CT)$,
o\`u $\Br_1(\CT):=\Ker(\Br(\CT)\to \Br(\CT_{\bk}))$.
De plus, si $X_{\bk}$ est $\bk$-rationnelle, par le Corollaire \ref{H0K2UD} ci-dessous, on a $\Br(k)\cong \Br(\CT) $.

\begin{cor}\label{H0K2UD}
Sous les hypoth\`eses du Th\'eor\`eme \ref{torsoruniv}, supposons que $k$ est alg\'e\-briquement clos et $X$ est projective et rationnelle.
Alors $\CT$ est rationnelle, $ \Br(\CT)=0$, $H^1(\CT,\BZ/n)=0$, $H^2(\CT,\BZ/n)= 0$ et le groupe $H^0(\CT,\CK_2)$ est uniquement divisible.
\end{cor}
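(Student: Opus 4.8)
The plan is to dispose of the first four assertions quickly, using Theorem \ref{torsoruniv} and the Kummer sequence, and to reserve the real work for the unique divisibility of $H^0(\CT,\CK_2)$, which I would extract from the motivic triangle (\ref{triangleZ2}). Rationality of $\CT$ is exactly Theorem \ref{torsoruniv}(v) applied with $K=\bk=k$, since $X$ is projective and $k$-rational. For the Brauer group, $X$ is a smooth projective rational variety over the algebraically closed field $k$, so, the Brauer group being a birational invariant of smooth projective $k$-varieties and $\Br(\BP^n_k)$ being $0$, one gets $\Br(X)=0$; as $k=\bk$, Theorem \ref{torsoruniv}(iv) then gives $\Br(\CT)\cong\Br(X_{\bk})=\Br(X)=0$.

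Next I would compute the cohomology with finite coefficients from the Kummer sequence $1\to\mu_n\to\BG_m\xrightarrow{\ n\ }\BG_m\to1$, which yields the two exact sequences
\begin{gather*}
0\to \bk[\CT]^{\times}/n\to H^1(\CT,\mu_n)\to \Pic(\CT)[n]\to 0,\\
0\to \Pic(\CT)/n\to H^2(\CT,\mu_n)\to \Br(\CT)[n]\to 0.
\end{gather*}
By Theorem \ref{torsoruniv}(i) one has $\bk[\CT]^{\times}=\bk^{\times}$, which is divisible because $k$ is algebraically closed, so the two left-hand terms vanish; by Theorem \ref{torsoruniv}(ii) one has $\Pic(\CT)=0$, and $\Br(\CT)=0$ by the previous paragraph, so the two right-hand terms vanish as well. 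Hence $H^1(\CT,\mu_n)=H^2(\CT,\mu_n)=0$, that is $H^1(\CT,\BZ/n)=H^2(\CT,\BZ/n)=0$ since $\mu_n\cong\BZ/n$ over $k$.

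For the last assertion, note that $\CT$, being a torsor under a (smooth, connected) torus over the smooth geometrically integral $X$, is itself smooth and geometrically integral, so Theorem \ref{motivic}(i) identifies $H^0(\CT,\CK_2)$ with $\BH^2(\CT,\BZ(2))$; it then suffices to prove that multiplication by $n$ is bijective on $\BH^2(\CT,\BZ(2))$ for every $n\geq1$. The long exact sequence of the triangle (\ref{triangleZ2}) for $r=2$ reads, around degree $2$,
\[
\BH^1(\CT,\BZ/n(2))\to \BH^2(\CT,\BZ(2))\xrightarrow{\ n\ }\BH^2(\CT,\BZ(2))\to \BH^2(\CT,\BZ/n(2)),
\]
so the bijectivity of $n$ follows once the two flanking groups vanish. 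Here is the crux of the argument: over the algebraically closed field $k$ one has, by the Merkurjev--Suslin theorem, a quasi-isomorphism $\BZ/n(2)\cong\mu_n^{\otimes2}\cong\BZ/n$ in $D^+_{\et}(\CT)$, whence $\BH^i(\CT,\BZ/n(2))\cong H^i(\CT,\BZ/n)$; these groups vanish for $i=1,2$ by the previous paragraph. The only non-formal input is thus the identification of $\BZ/n(2)$ with $\mu_n^{\otimes2}$ (equivalently, the $n$-divisibility of $\CK_{2,\et}$ as an \'etale sheaf); granting it, the vanishing of $H^1$ and $H^2$ with $\BZ/n$-coefficients propagates through the motivic long exact sequence and forces $H^0(\CT,\CK_2)$ to be uniquely divisible.
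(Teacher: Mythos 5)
Your proof is correct and follows essentially the same route as the paper: Theorem \ref{torsoruniv} plus birational invariance of the Brauer group for the first two claims, the Kummer sequence with (i), (ii) and $\Br(\CT)=0$ for the vanishing of $H^1$ and $H^2$ with $\BZ/n$-coefficients, and the motivic triangle (\ref{triangleZ2}) together with Theorem \ref{motivic}(i) for the unique divisibility of $H^0(\CT,\CK_2)$. The only cosmetic differences are that you invoke Theorem \ref{torsoruniv}(v) for rationality where the paper trivializes the torsor Zariski-locally over $X$, and that you make explicit the identification $\BZ/n(2)\cong\mu_n^{\otimes 2}$ (Merkurjev--Suslin/Lichtenbaum) which the paper uses implicitly when writing $H^i(\CT,\BZ/n(2))\cong 0$.
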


\begin{proof}
Dans ce cas, $T\cong \BG_m^r$ pour un certain $r\in \BZ_{>0}$. 
Puisque $H^1_{\mathrm{Zari}}(-,\BG_m)=H^1(-,\BG_m)$, il existe un ouvert $U\sbt X$ tel que $\CT\times_XU\cong U\times \BG_m^r$. 
Donc $\CT$ est rationnelle.

Par le Th\'eor\`eme \ref{torsoruniv} (iv),  $ \Br(\CT)=0$.
Par la suite de Kummer, 
et le Th\'eor\`eme \ref{torsoruniv} (i) et (ii), ceci implique
$H^1(\CT,\BZ/n)=0$ et $H^2(\CT,\BZ/n)= 0$.
Par le Th\'eor\`eme \ref{motivic} et le triangle (\ref{triangleZ2}) sur $\BZ(2)$,
on a $0\cong H^1(\CT,\BZ/n(2))\twoheadrightarrow H^0(\CT,\CK_2)[n]$ et $H^0(\CT,\CK_2)/n\hookrightarrow H^2(\CT,\BZ/n(2))\cong 0$.
Ainsi $H^0(\CT,\CK_2)$ est uniquement divisible.
\qed
\end{proof}

Comme cons\'equence, on a $\bk[\CT^c]^\times /\bk^\times =0$, $\Br(\CT^c_{\bk})=0$, et $H^1(\CT^c_{\bk},\BZ/n)= 0$, mais ceci r\'esulte d\'ej\`a de la rationalit\'e de la
$\bk$-vari\'et\'e projective et lisse $\CT^c_{\bk}$.

\begin{cor}\label{zerorationnel}
Sous les hypoth\`eses du Th\'eor\`eme \ref{torsoruniv}, supposons qu'il existe une extension finie $K/k$ de corps de degr\'e $d$
 telle que la vari\'et\'e $X_K$ soit projective et $K$-rationnelle.
 Alors, pour tous $i\in \BN,j\in \BZ$, le groupe $\frac{H^i_{\nr}(\CT^c,\BQ/\BZ(j))}{H^i(k,\BQ/\BZ(j))}$ est annul\'e par $d$.
\end{cor}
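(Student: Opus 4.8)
Le plan est de mener un argument de restriction--corestriction (argument de norme) qui ram\`ene l'annulation du quotient sur $k$ au cas du corps $K$, sur lequel $X$, et donc $\CT$, devient rationnelle.

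D'abord, je traiterais la situation sur $K$. Comme $K/k$ est fini et $\Char k=0$, le changement de base $\CT^c_K:=\CT^c\times_kK$ est encore une compactification lisse de $\CT_K=\CT\times_kK$, et $\CT^c_K=\CT_K\times^{T_K}T^c_K$. Par hypoth\`ese $X_K$ est projective et $K$-rationnelle, donc le Th\'eor\`eme \ref{torsoruniv}(v) entra\^ine que $\CT_K$ est stablement $K$-rationnelle ; par cons\'equent $\CT^c_K\times_K\BP^m_K$ est lisse, projective et $K$-rationnelle pour un $m$ convenable. En appliquant \`a $\CT^c_K\times_K\BP^m_K$ l'isomorphisme $H^i(K,\BQ/\BZ(j))\iso H^i_{\nr}(-,\BQ/\BZ(j))$ valable pour les vari\'et\'es projectives, lisses et $K$-rationnelles (\cite[Thm. 4.1.1 et Prop. 4.1.4]{CT95}), et en utilisant la r\'etraction fournie par un $K$-point de $\BP^m_K$, on conclurait que l'application structurale $H^i(K,\BQ/\BZ(j))\to H^i_{\nr}(\CT^c_K,\BQ/\BZ(j))$ est elle-m\^eme un isomorphisme ; autrement dit $\frac{H^i_{\nr}(\CT^c_K,\BQ/\BZ(j))}{H^i(K,\BQ/\BZ(j))}=0$ pour tous $i,j$.

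Ensuite, j'utiliserais que $\CT^c_K\to\CT^c$ est fini \'etale de degr\'e $d$, de sorte que la cohomologie non ramifi\'ee poss\`ede une application de restriction $\Res$ et une application de corestriction (transfert) $\mathrm{Cor}$ v\'erifiant $\mathrm{Cor}\circ\Res=d$, toutes deux compatibles, via les morphismes structuraux, avec la restriction et la corestriction habituelles en cohomologie galoisienne reliant $H^i(k,\BQ/\BZ(j))$ et $H^i(K,\BQ/\BZ(j))$. Pour $\alpha\in H^i_{\nr}(\CT^c,\BQ/\BZ(j))$, on aurait alors $d\alpha=\mathrm{Cor}(\Res(\alpha))$ ; par l'\'etape pr\'ec\'edente $\Res(\alpha)$ appartient \`a l'image de $H^i(K,\BQ/\BZ(j))$, et la corestriction envoie cette image dans l'image de $H^i(k,\BQ/\BZ(j))$. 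Donc $d\alpha$ est constant, c'est-\`a-dire $d\alpha=0$ dans le quotient, ce qui est exactement l'\'enonc\'e cherch\'e.

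Les points de routine sont les compatibilit\'es de changement de base. L'obstacle principal que j'anticipe est de nature structurale : v\'erifier que la cohomologie non ramifi\'ee \`a coefficients $\BQ/\BZ(j)$ admet bien une corestriction pour le rev\^etement fini \'etale $\CT^c_K\to\CT^c$ et que celle-ci est compatible avec la corestriction galoisienne sur les parties constantes. Il s'agit du transfert en cohomologie non ramifi\'ee (cons\'equence formelle du transfert en cohomologie \'etale d'un morphisme fini et plat, respectant la filtration non ramifi\'ee) ; une fois ce point acquis, l'identit\'e $\mathrm{Cor}\circ\Res=d$ issue de la formule de projection et l'annulation sur $K$ ach\`event imm\'ediatement la d\'emonstration.
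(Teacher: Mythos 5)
Your proposal is correct and follows essentially the same route as the paper: establish the vanishing of $\frac{H^i_{\nr}(\CT^c_K,\BQ/\BZ(j))}{H^i(K,\BQ/\BZ(j))}$ via the stable $K$-rationality of $\CT_K$ (Th\'eor\`eme \ref{torsoruniv}(v)), then descend to $k$ by the transfer (corestriction) for the degree-$d$ covering $\CT^c_K\to\CT^c$, using $\mathrm{Cor}\circ\Res=d$ and the compatibility of the transfer with the constant parts. The paper's phrase \og argument de restriction-inflation \fg{} is a slip for restriction--corestriction, which is exactly what you carried out, including correctly identifying the well-definedness of the transfer on unramified classes as the one point needing verification.
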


\begin{proof}
D'apr\`es le Th\'eor\`eme \ref{torsoruniv}\,(v), la\,$K$-vari\'et\'e\,$\CT_K$\,est stablement\,$K$-rationnelle. 
Ainsi\,$\frac{H^i_{\nr}(\CT^c_K,\BQ/\BZ(j))}{H^i(K,\BQ/\BZ(j))}$ $=0$ pour tous $i,j$.
Puisque le transfert est bien d\'efini pour $H^i_{\nr}(-,\BQ/\BZ(2))$ et $H^i(-,\BQ/\BZ(2))$, on peut d\'efinir le transfert
$\frac{H^i_{\nr}(\CT^c_K,\BQ/\BZ(j))}{H^i(K,\BQ/\BZ(j))}\xrightarrow{tr}\frac{H^i_{\nr}(\CT^c,\BQ/\BZ(j))}{H^i(k,\BQ/\BZ(j))} $.
Un argument de restriction-inflation 
donne le r\'esultat.
\qed
\end{proof}

En utilisant la Proposition \ref{exam1}(i), dans un pr\'ec\'edent article 
j'ai \'etabli :

   \begin{thm}[{\cite[Thm. 2.7]{C2}}]  \label{thmdecao}
Soit $X$ une $k$-surface projective, lisse, g\'eom\'e\-triquement rationnelle.
Soit $\mathcal{T} \to X$ un torseur universel sur $X$ et soit $\mathcal{T}^c$
une $k$-compactification lisse de  $\mathcal{T}$. Alors  le groupe
$\frac{H^3_{\nr}(\mathcal{T}^c,\BQ/\BZ(2))}{H^3(k, \BQ/\BZ(2))}$ est fini.
\end{thm}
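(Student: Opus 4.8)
Le principe est d'exprimer $H^3_{\nr}(\CT^c,\BQ/\BZ(2))$ via le complexe motivique de poids deux, puis d'analyser la partie variable de $\BH^4(\CT^c,\BZ(2))$ par descente de $\bk$ \`a $k$, l'ingr\'edient essentiel \'etant que $\Pic(\CT^c_{\bk})$ est un module de permutation.

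Comme $H^3_{\nr}(-,\BQ/\BZ(2))$ est un invariant $k$-birationnel des vari\'et\'es projectives et lisses, on peut remplacer $\CT^c$ par la compactification lisse $\CT\times^TT^c$ du th\'eor\`eme \ref{torsoruniv}; par la proposition \ref{exam1}(3) sa fibre g\'eom\'etrique $\CT^c_{\bk}$ est cellulaire. La suite exacte (\ref{e1}) du th\'eor\`eme \ref{motivic} appliqu\'ee \`a $\CT^c$ s'\'ecrit
$$0\to\CCH^2(\CT^c)\xrightarrow{\mathrm{cl}}\BH^4(\CT^c,\BZ(2))\to H^3_{\nr}(\CT^c,\BQ/\BZ(2))\to 0,$$
et la m\^eme suite pour $\Spec k$ identifie $H^3(k,\BQ/\BZ(2))$ \`a $\BH^4(k,\BZ(2))$ (puisque $\CCH^2(\Spec k)=0$). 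Ainsi $\frac{H^3_{\nr}(\CT^c,\BQ/\BZ(2))}{H^3(k,\BQ/\BZ(2))}$ est le conoyau de $\mathrm{cl}(\CCH^2(\CT^c))+\Im(\BH^4(k,\BZ(2)))$ dans $\BH^4(\CT^c,\BZ(2))$. Comme les coefficients $\BQ/\BZ$ rendent ce groupe de torsion, il suffit de montrer que ce quotient est de type fini.

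J'utiliserais ensuite la suite spectrale de Hochschild--Serre $E_2^{p,q}=H^p(k,\BH^q(\CT^c_{\bk},\BZ(2)))\Rightarrow\BH^{p+q}(\CT^c,\BZ(2))$ et je calculerais les termes g\'eom\'etriques pour $q\leq 4$ \`a l'aide du th\'eor\`eme \ref{motivic}(i), de la proposition \ref{H1K2control} et de la cellularit\'e (th\'eor\`eme \ref{cellulairecoho}). Comme $\CT^c_{\bk}$ est projective, lisse et rationnelle (corollaire \ref{H0K2UD}), on a $H^3_{\nr}(\CT^c_{\bk},\BQ/\BZ(2))=0$, d'o\`u $\BH^4(\CT^c_{\bk},\BZ(2))\cong\CCH^2(\CT^c_{\bk})$, groupe ab\'elien libre de type fini; de plus $\BH^3(\CT^c_{\bk},\BZ(2))=H^1(\CT^c_{\bk},\CK_2)\cong\Pic(\CT^c_{\bk})\otimes\bk^{\times}$, tandis que $\BH^2$ et $\BH^1$ valent $H^0(\CT^c_{\bk},\CK_2)$ et $K_{3,indec}(\bk(\CT^c))$. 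Parmi les quatre termes $E_2^{p,q}$ avec $p+q=4$, le terme d\'ecisif est $E_2^{1,3}=H^1(k,\Pic(\CT^c_{\bk})\otimes\bk^{\times})$: comme $\Pic(\CT^c_{\bk})$ est un module de permutation (th\'eor\`eme \ref{torsoruniv}(iii)), le lemme de Shapiro et le th\'eor\`eme 90 de Hilbert donnent $E_2^{1,3}=0$. Le terme $E_2^{0,4}=H^0(k,\CCH^2(\CT^c_{\bk}))$ est de type fini.

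Il reste \`a traiter les contributions $E_2^{2,2}=H^2(k,H^0(\CT^c_{\bk},\CK_2))$ et $E_2^{3,1}=H^3(k,K_{3,indec}(\bk(\CT^c)))$, ce qui constitue \`a mon avis l'obstacle principal. Je comparerais la suite spectrale de $\CT^c$ \`a celle de $\Spec k$, dont le terme de degr\'e total $4$ est pr\'ecis\'ement construit \`a partir de $H^2(k,K_2(\bk))$ et $H^3(k,K_{3,indec}(\bk))$. Le point est de montrer d'une part que les tir\'es en arri\`ere $K_2(\bk)\to H^0(\CT^c_{\bk},\CK_2)$ et $K_{3,indec}(\bk)\to K_{3,indec}(\bk(\CT^c))$ sont des isomorphismes de $\Gk$-modules (c'est-\`a-dire que ces invariants de la vari\'et\'e rationnelle $\CT^c_{\bk}$ sont r\'eduits \`a leur partie constante), d'autre part de contr\^oler les diff\'erentielles issues de $E_2^{0,3}=(\Pic(\CT^c_{\bk})\otimes\bk^{\times})^{\Gk}$, qui ne font que r\'etr\'ecir ces termes; on obtient alors que l'image de $\BH^4(k,\BZ(2))$ est \'egale au sous-groupe de filtration $F^1\BH^4(\CT^c,\BZ(2))$ b\^ati sur $E_\infty^{2,2}$ et $E_\infty^{3,1}$. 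Ceci admis, le quotient $\BH^4(\CT^c,\BZ(2))/F^1\cong E_\infty^{0,4}\subseteq H^0(k,\CCH^2(\CT^c_{\bk}))$ est de type fini; donc $\frac{H^3_{\nr}(\CT^c,\BQ/\BZ(2))}{H^3(k,\BQ/\BZ(2))}$, qui en est un quotient, est de type fini, et \'etant de torsion il est fini. Les deux isomorphismes de comparaison reposent sur le fait que le $K_2$ non ramifi\'e et le $K_3$ ind\'ecomposable du corps des fonctions d'une vari\'et\'e rationnelle sur $\bk$ co\"incident avec ceux de $\bk$, ce qui est l\`a que sert la rationalit\'e g\'eom\'etrique de $\CT^c_{\bk}$.
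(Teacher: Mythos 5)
Your proposal is correct and is in substance the same argument as the one behind the paper's statement: the paper does not reprove this theorem but quotes it from \cite[Thm. 2.7]{C2}, and that proof (which, as the paper indicates, relies on the cellularity statement of la proposition \ref{exam1}) rests on exactly the ingredients you assemble — la suite (\ref{e1}), la suite spectrale de Hochschild--Serre pour la cohomologie de Lichtenbaum de poids deux, l'identification $H^1(\CT^c_{\bk},\CK_2)\cong \Pic(\CT^c_{\bk})\otimes\bk^{\times}$ (proposition \ref{H1K2control}), la structure de permutation de $\Pic(\CT^c_{\bk})$ (th\'eor\`eme \ref{torsoruniv} (iii)) qui tue $E_2^{1,3}$ par Shapiro et Hilbert 90, et le fait que $\CCH^2(\CT^c_{\bk})$ est de type fini par cellularit\'e (th\'eor\`eme \ref{cellulairecoho}), la finitude r\'esultant alors de « de type fini $+$ de torsion ». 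La seule diff\'erence est d'emballage : votre comparaison « \`a la main » des suites spectrales de $\CT^c$ et de $\Spec\ k$ (constance de $H^0(\CK_2)$ et de $K_{3,indec}$, contr\^ole des diff\'erentielles issues de $E_2^{0,3}$) red\'emontre la machinerie de descente que \cite[\S 2.2]{CT1} fournit toute faite et que le pr\'esent article invoque directement dans la preuve du th\'eor\`eme \ref{H4Z2}; vous pourriez donc citer ce r\'esultat pour abr\'eger cette \'etape, qui est par ailleurs correctement men\'ee.
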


\subsection{Accouplements avec les 0-cycles}

Soit $X$ une vari\'et\'e projective, lisse, g\'eom\'e\-triquement int\`egre. 
Pour toute extension de corps $K/k$ et tous $i \in \BN,j\in \BZ$,
on a un accouplement naturel :
$$A_0(X_K)\times H^i_{\nr}(X,\BQ/\BZ(j))\to H^i(K,\BQ/\BZ(j)):\ (\sum_in_iP_i, \alpha)\mapsto \sum_i \mathrm{Norm}_{K(P_i)/K}P_i^*(\alpha),$$
o\`u $P_i: \Spec\ K(P_i)\to X$ (voir \cite[formule (3)]{M3}).

\begin{prop}\label{chow0paring}
Soit $X$ une $k$-surface projective, lisse, g\'eom\'etriquement rationnelle.
Soit $\mathcal{T} \to X$ un torseur universel sur $X$ et soit $\mathcal{T}^c$
une $k$-compactification lisse de  $\mathcal{T}$.
Supposons que $\CT^c(k)\neq\emptyset$.
Alors l'homomorphisme 
$$H^i_{\nr}(X,\BQ/\BZ(j))\to \frac{H^i_{\nr}(\CT^c,\BQ/\BZ(j))}{H^i(k,\BQ/\BZ(j))}$$
 est nul pour tous $i \in \BN,j\in \BZ$.
\end{prop}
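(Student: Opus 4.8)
The plan is to exploit the $k$-point $P_0 \in \CT^c(k)$ together with the $0$-cycle pairing recalled just above and the fact that, for a smooth integral variety, unramified cohomology injects into the cohomology of the function field. Throughout, $\pi:\CT^c\to X$ denotes the structure morphism and $x_0:=\pi(P_0)\in X(k)$. The first step is formal: evaluation at $P_0$ gives a map $P_0^*:H^i_{\nr}(\CT^c,\BQ/\BZ(j))\to H^i(k,\BQ/\BZ(j))$ splitting the constant inclusion, so $H^i_{\nr}(\CT^c)/H^i(k)\cong\ker(P_0^*)$, and the image of $\alpha\in H^i_{\nr}(X)$ in the quotient is represented by $\pi^*\alpha-P_0^*\pi^*\alpha=\pi^*(\alpha-x_0^*\alpha)$. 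Replacing $\alpha$ by $\alpha-x_0^*\alpha$, it suffices to prove: if $\alpha\in H^i_{\nr}(X,\BQ/\BZ(j))$ satisfies $x_0^*\alpha=0$, then $\pi^*\alpha=0$ in $H^i_{\nr}(\CT^c,\BQ/\BZ(j))$. (For $i=0$ there is nothing to prove, so assume $i\geq 1$.)

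Since $\CT^c$ is smooth and integral, $\pi^*\alpha=0$ as soon as it vanishes at the generic point. Let $\eta:\Spec k(\CT^c)\to\CT^c$ be the generic point, let $\eta_X':=\pi\circ\eta$, regarded as a $k(\CT^c)$-rational point of $X$, and let $(x_0)'$ be the constant $k(\CT^c)$-point $x_0$. A direct computation (or the projection formula for $\pi$ applied to the pairing) gives, using $x_0^*\alpha=0$,
\[ (\pi^*\alpha)(\eta)=\langle \eta_X',\alpha\rangle_{k(\CT^c)}=\langle \eta_X'-(x_0)',\alpha\rangle_{k(\CT^c)}. \]
Hence everything reduces to showing that the degree-zero $0$-cycle $\zeta:=\eta_X'-(x_0)'$ pairs trivially with every such $\alpha$; for this it is enough that $\zeta=0$ in $A_0(X_{k(\CT^c)})$, which then settles all $(i,j)$ at once.

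The main point — and the hard part — is thus the vanishing of $\zeta$ in $A_0(X_{k(\CT^c)})$, and this is exactly where the universal torsor is used. Because the generic point of $\CT^c$ lies in the dense open $\CT$, the map $\eta$ is a $k(\CT^c)$-point of $\CT$; hence the universal torsor $\CT_{k(\CT^c)}\to X_{k(\CT^c)}$ of the surface $X_{k(\CT^c)}$ carries a $k(\CT^c)$-rational point lying above $\eta_X'$. Since $X_{\bk}$ is rational we have $A_0(X_{\overline{k(\CT^c)}})=0$, so $A_0(X_{k(\CT^c)})$ is entirely ``algebraic'', and by the Colliot-Th\'el\`ene--Sansuc descent theory for geometrically rational surfaces it is governed by $H^1(k(\CT^c),\Pic(X_{\bk}))$. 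The existence of a rational point on a universal torsor, whose type is precisely $\Pic(X_{\bk})$, should trivialise this obstruction, forcing $A_0(X_{k(\CT^c)})=0$ and therefore $\zeta=0$. Equivalently, one may phrase this as an $R$-equivalence statement on $\CT^c_{k(\CT^c)}$ and push it down along $\pi$.

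I expect the genuine obstacle to be this last step: converting ``the universal torsor of $X_{k(\CT^c)}$ has a rational point'' into the rational equivalence $\eta_X'\sim(x_0)'$, i.e. into $A_0(X_{k(\CT^c)})=0$. The reductions of the first two paragraphs are purely formal and uniform in $(i,j)$, but isolating the precise descent statement — that a universal torsor of type $\Pic(X_{\bk})$ bearing a rational point kills the algebraic part of $A_0$ over the function field — and checking its hypotheses is where the real content lies. A secondary point to verify carefully is the functoriality (projection formula) of the $0$-cycle pairing under the proper morphism $\pi$, which underlies the identity displayed above.
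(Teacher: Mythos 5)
Your formal reductions in the first two paragraphs are sound, and they in fact run parallel to the paper's own proof: the paper also reduces everything, via the pairing between $A_0$ and unramified cohomology, to the rational-equivalence statement $\eta_X'\sim x_0'$ in $A_0(X_{k(\CT^c)})$. (One caveat there: an arbitrary smooth compactification $\CT^c$ of $\CT$ carries no morphism to $X$ extending $\CT\to X$, so your ``structure morphism $\pi$'' does not exist a priori; the paper first replaces $\CT^c$ by $\CT\times^T T^c$, which is legitimate because $H^i_{\nr}(-,\BQ/\BZ(j))$ and the existence of a $k$-point are $k$-birational invariants of smooth projective varieties.)

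The genuine gap is your key step: the claim $A_0(X_{k(\CT^c)})=0$ is false in general. Since $P_0$ is a smooth $k$-point of $\CT^c$, specialization at $P_0$ splits the base-change map, so $A_0(X)\hookrightarrow A_0(X_{k(\CT^c)})$; but there exist geometrically rational conic bundle (Ch\^atelet-type) surfaces with $X(k)\neq\emptyset$ and $A_0(X)\neq 0$ (see \cite{CTCS}, \cite{CTS81}, \cite{CTSSD}), and for such $X$ a universal torsor with a $k$-point does exist (twist so that the given $k$-point lifts). Hence the principle ``a universal torsor of type $\Pic(X_{\bk})$ with a rational point kills $A_0$'' cannot hold. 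What descent theory actually provides --- and what the paper uses --- is the \emph{injectivity}, for every field extension $K/k$, of the map $\theta:A_0(X_K)\to H^1(K,T)$, $\sum_i n_ix_i\mapsto \sum_i n_i\Res_{K(x_i)/K}x_i^*[\CT]$ (\cite[Prop. 4]{CT83}, \cite[Thm. 3]{CTS81}); note that the target is $H^1(K,T)$ with $T$ the N\'eron--Severi torus, not $H^1(K,\Pic(X_{\bk}))$, and that a rational point on $\CT$ does not make this group vanish --- it only makes the individual evaluations $x^*[\CT_K]$ vanish at those points $x$ which lift to $\CT_K$. Even granting this injectivity, your argument still needs one idea you do not supply: the point $x_0=\pi(P_0)$ need not lift to $\CT_K$, since $P_0$ may lie on the boundary $\CT^c\setminus\CT$. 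The paper handles this with the easy moving lemma for zero-cycles on the smooth variety $\CT^c_K$: every class in $A_0(\CT^c_K)$ is represented by a cycle supported on $\CT_K$, where $\pi^*[\CT]$ is trivial, so $\theta\circ\pi_*=0$ on $A_0(\CT^c_K)$; injectivity of $\theta$ then gives $\pi_*=0$, and in particular $\zeta=\eta_X'-(x_0)'=-\pi_*(P_0-\eta)=0$ in $A_0(X_{k(\CT^c)})$, which is exactly what your reduction requires.
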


\begin{proof}
Soit $T$ un tore tel que $T^*\cong \Pic(X_{\bk})$.
Alors $\CT\to X$ est un $T$-torseur, i.e. $[\CT]\in H^1(X,T)$.

Il existe  une $T$-vari\'et\'e torique lisse $T^c$ (cf. \cite[Cor. 1]{CTHS}).
Puisque le groupe $H^i_{\nr}(\bullet,\BQ/\BZ(j))$ est un invariant birationnel des vari\'et\'es projectives et lisses,
et que l'existence d'un $k$-point est un invariant $k$-birationnel des $k$-vari\'et\'es projectives et lisses (lemme de Nishimura),
 on peut supposer que $\CT^c\cong T^c\times^T\CT$.
Il  existe alors un morphisme $\CT^c\xrightarrow{\pi} X$ \'etendant $\CT \to X$.  
 
Par \cite[\S II.B]{CTSA},  pour toute extension $K/k$ de corps, $[\CT]$ induit un homomorphisme 
$$A_0(X_K)\xrightarrow{\theta} H^1(K,T): \sum_ix_i\mapsto \sum_i\Res_{K(x_i)/K}x_i^*[\CT]$$
Puisque $[\CT]|_{\CT}=0\in H^1(\CT,T)$ et  que tout \'el\'ement de $A_0(\CT^c_K)$ \'equivaut \`a un \'el\'ement support\'e sur $\CT_K$
(lemme de d\'eplacement facile sur les z\'ero-cycles sur une vari\'et\'e lisse),
la composition des homomorphismes
$$A_0(\CT^c_K)\xrightarrow{\pi_*}A_0(X_K)\xrightarrow{\theta}H^1(K,T): \sum_it_i\mapsto \sum_i\Res_{K(t_i)/K}t_i^*(\pi^*[\CT])$$
est nulle.

D'apr\`es  \cite[Prop. 4]{CT83} et \cite[Thm. 3]{CTS81}, pour toute extension\,$K/k$\,de corps, 
le morphisme\,$A_0(X_K)\xrightarrow{\theta} H^1(K,T) $ est injectif.
Ainsi le morphisme $A_0(\CT^c_K)\xrightarrow{\pi_*}A_0(X_K)$ est nul.

Pour toute extension $K/k$ de corps, on a des accouplements compatibles:
$$\xymatrix{A_0(\CT^c_K)\ar[d]^{\pi_*}\ar@{}[r]|-{\times}&H^i_{\nr}(\CT^c,\BQ/\BZ(j))\ar[r]^{(,)_{\CT^c}}&H^i(K,\BQ/\BZ(j))\ar[d]^=\\
A_0(X_K)\ar@{}[r]|-{\times}&H^i_{\nr}(X,\BQ/\BZ(j))\ar[r]^{(,)_X}\ar[u]^{\pi^*}&H^i(K,\BQ/\BZ(j)).
}$$
Soit $t\in \CT^c(k)$, $K:=k(\CT)$ et $\eta\in \CT$ le point g\'en\'erique.
Alors $t-\eta\in A_0(\CT^c_K)$ et, pour tout $\alpha\in H^i_{\nr}(X,\BQ/\BZ(j))$, on a 
$$(t,\pi^*\alpha)_{\CT^c}-(\eta, \pi^*\alpha)_{\CT^c}=(t-\eta, \pi^*\alpha)_{\CT^c}=(\pi_*(t-\eta ),\alpha)_X=(0,\alpha)_X=0\in H^i(K,\BQ/\BZ(j)).$$ 
Puisque  $(\eta,-)_{\CT^c}$ est l'inclusion canonique $H^i_{\nr}(\CT^c,\BQ/\BZ(j))\sbt H^i(k(\CT^c),\BQ/\BZ(j))$
et
$$\Im (t,-)_{\CT^c}\sbt \Im (H^i(k,\BQ/\BZ(j))\to H^i(K,\BQ/\BZ(j))),$$
on a $\Im (\pi^*)\in \Im (H^i(k,\BQ/\BZ(j))\to H^i_{\nr}(\CT^c,\BQ/\BZ(j)) ) $.
\qed
\end{proof}

\section{Cohomologie motivique \`a coefficients $\BZ(2)$ d'un torseur sous un tore}

Dans  \cite{Mer}, A. Merkurjev a \'etudi\'e la cohomologie motivique \`a coefficients $\BZ(2)$ pour un torseur
sous un groupe semisimple. Nous reprenons sa m\'ethode pour \'etudier
les torseurs sous un tore.
Soient $T$ un tore sur $k$ de dimension $N$, et $f:Y\ra X$ un torseur sous $T$ sur $X$, o\`u $X$ est une $k$-vari\'et\'e lisse g\'eom\'etriquement int\`egre.
 On calcule la relation entre la cohomologie motivique de $X$ et celle  de $Y$ \`a coefficients dans le complexe $\BZ(2)$, 
 i.e. on calcule la cohomologie de $\BZ_f(2)$ ci-dessous (Th\'eor\`eme \ref{mainz2}).

La plan de la d\'emonstraction du Th\'eor\`eme \ref{mainz2} est:
 (i) on calcule les faisceaux cohomologiques de $\BZ_f(2)$, en utilisant une m\'ethode de Merkurjev (\cite{Mer}), qui g\'en\'eralise le travail de Sansuc sur $\BZ_f(1)$;
 (ii) on  calcule la cohomologie de $\BZ_f(2)$ sur $X_{\bk}$.
 Ce th\'eor\`eme sera appliqu\'e \`a calculer la cohomologie de $\BZ_f(2)$ sur $X$ (Th\'eor\`eme \ref{H4Z2}) en utilisant la suite spectrale de Hochschild-Serre.

Dans ce cas,  on d\'efinit $\BZ_f(i)$ le c\^one du morphisme naturel  $\BZ_X(i)\to Rf_*\BZ_Y(i)$ pour $i=1,2$ (\cite[(4.3)]{Mer}),
i.e. on a deux triangles dans $D^+_{\acute{e}t}(X)$:
\begin{equation}\label{Zde1}
\BZ_X(1)\to Rf_*\BZ_Y(1)\to \BZ_f(1)\ra \BZ_X(1)[1],
\end{equation}
et
\begin{equation}\label{Zde2}
\BZ_X(2)\xrightarrow{f^{\#}(2)} Rf_*\BZ_Y(2)\xrightarrow{d(2)} \BZ_f(2)\ra \BZ_X(2)[1].
\end{equation}

Pour chaque $\Gamma_k$-module $M$ continu discret,
 on peut voir $M$ comme un faisceau \'etale sur le petit site \'etale (\'et/$k$), et son image inverse sur le grand site \'etale est not\'e \'egalement par $M$. 
Ainsi  $\bigwedge^*T^*$ est un faisceau \'etale. 

Soient 
\begin{equation}
R^{\leq 2}f_*\BZ_Y(1):=\tau^{\leq 2} Rf_*\BZ_Y(1)\ \ \ \text{et} \ \ \ \BZ^{\leq 2}_f(1):=\tau^{\leq 2}\BZ_f(1).\end{equation}

\begin{prop}\label{Zf1}
On a $ \BZ_f^{\leq 2}(1)\cong T^*[-1]$ et donc  $\BZ_X(1)\otimes \BZ_f^{\leq 2}(1)\cong \BG_m\otimes T^*[-2]$ et  
$$\tau^{\leq 2}(\BZ_X(1)\otimes^L\BZ_f(1))\cong \BG_m\otimes T^*[-2].$$
\end{prop}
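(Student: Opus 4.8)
The plan is to read off the cohomology sheaves of $\BZ_f(1)$ from the defining triangle \eqref{Zde1} and then to transport the resulting formula through the derived tensor product. Since $\BZ_X(1)\cong \BG_m[-1]$ and $\BZ_Y(1)\cong \BG_{m,Y}[-1]$, the triangle \eqref{Zde1} is, up to the shift $[-1]$, the cone triangle of the natural pullback $f^{\#}\colon \BG_{m,X}\to Rf_*\BG_{m,Y}$. Taking cohomology sheaves of \eqref{Zde1} yields a long exact sequence relating $\CH^i(\BZ_f(1))$ to $\CH^1(\BZ_X(1))=\BG_{m,X}$ and to $\CH^i(Rf_*\BZ_Y(1))=R^{i-1}f_*\BG_{m,Y}$. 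In low degrees this gives $\CH^0(\BZ_f(1))=\ker(f^{\#}\colon \BG_{m,X}\to f_*\BG_{m,Y})$, $\CH^1(\BZ_f(1))=\coker(f^{\#})$, and $\CH^2(\BZ_f(1))=R^1f_*\BG_{m,Y}$.

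First I would compute these three sheaves stalkwise. As $X$ is smooth over $k$, each strict henselization $R=\CO_{X,\bar x}^{sh}$ at a geometric point is a regular local ring; over $\Spec R$ the $T$-torsor $f$ is trivial, so $f^{-1}(\Spec R)\cong \BG_{m,R}^{N}$ (geometrically $T\cong \BG_m^N$). Hence the stalk of $f_*\BG_{m,Y}$ is $\Gamma(\BG_{m,R}^N,\CO^\times)=R^\times\oplus T^*$, with $f^{\#}$ the inclusion of the first summand; this forces $\CH^0(\BZ_f(1))=0$ and identifies the canonical character map $f_*\BG_{m,Y}\to T^*$ as a map with kernel $\BG_{m,X}$ which is surjective on stalks, so that $\CH^1(\BZ_f(1))\cong T^*$. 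Likewise the stalk of $R^1f_*\BG_{m,Y}$ is $\Pic(\BG_{m,R}^N)=\Pic(R)=0$, so $\CH^2(\BZ_f(1))=0$. A complex whose only nonzero cohomology sheaf is $T^*$, placed in degree $1$, is isomorphic in $D^+_{\acute{e}t}(X)$ to $T^*[-1]$, which gives $\BZ_f^{\leq 2}(1)=\tau^{\leq 2}\BZ_f(1)\cong T^*[-1]$ and recovers Sansuc's weight-one computation.

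The second isomorphism is then formal: using $\BZ_X(1)\cong \BG_m[-1]$ and the fact that $T^*$ is a torsion-free $\BZ$-module, so that $\BG_m\otimes^L_{\BZ}T^*=\BG_m\otimes_{\BZ}T^*$, one obtains $\BZ_X(1)\otimes^L\BZ_f^{\leq 2}(1)\cong \BG_m[-1]\otimes^L T^*[-1]\cong (\BG_m\otimes T^*)[-2]$, which is concentrated in degree $2$.

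For the last isomorphism I would compare $\BZ_f(1)$ with its truncation. The truncation triangle $\BZ_f^{\leq 2}(1)\to \BZ_f(1)\to \tau^{\geq 3}\BZ_f(1)\to \BZ_f^{\leq 2}(1)[1]$ has third term in $D^{\geq 3}_{\acute{e}t}(X)$; tensoring with $\BZ_X(1)\in D^{\geq 1}_{\acute{e}t}(X)$ and invoking \eqref{0opluse1} shows $\BZ_X(1)\otimes^L\tau^{\geq 3}\BZ_f(1)\in D^{\geq 3}_{\acute{e}t}(X)$, so it is annihilated by $\tau^{\leq 2}$. Applying $\tau^{\leq 2}$ to the tensored triangle therefore identifies $\tau^{\leq 2}(\BZ_X(1)\otimes^L\BZ_f(1))$ with $\tau^{\leq 2}(\BZ_X(1)\otimes^L\BZ_f^{\leq 2}(1))=(\BG_m\otimes T^*)[-2]$, as asserted. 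The main obstacle is the sheaf computation of the second step, and most pointedly the vanishing $R^1f_*\BG_{m,Y}=0$: this is exactly where local triviality of a torus torsor over a strictly henselian regular local ring, together with the identity $\Pic(\BG_{m,R}^N)=\Pic(R)$, is essential; everything after it is derived-category bookkeeping driven by \eqref{0opluse1}.
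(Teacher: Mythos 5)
Your proof is correct and follows essentially the same route as the paper's: the long exact sequence of cohomology sheaves attached to the triangle \eqref{Zde1}, identification of those sheaves via \'etale-local triviality of the torsor over strictly henselian regular local bases (giving $\CH^i(\BZ_f(1))=0$ for $i\le 0$, $i=2$ and $\CH^1(\BZ_f(1))\cong T^*$), flatness of the lattice $T^*$ for the second claim, and the truncation argument powered by \eqref{0opluse1} for the third. The only difference is cosmetic: where the paper cites \cite[Prop.~1.4.2]{CTS} for the exact sequence $0\to\BG_{m,X}\to f_*\BG_{m,Y}\to T^*\to 0$, you re-derive that sequence stalkwise.
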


\begin{proof}
Puisque $\BZ_X(1)=\BG_m[-1]|_X$ et $\BZ_Y(1)=\BG_m[-1]|_Y$, 
on a: $\CH^1(\BZ_X(1))\cong \BG_m|_X$, $\CH^i(\BZ_X(1))=0$ pour tout $i\neq 1$ et $\CH^i(Rf_*\BZ_Y(1))=R^{i-1}f_*(\BG_m|_Y)$ pour tout $i$.
Ainsi $\CH^i(Rf_*\BZ_Y(1))=0$ pour $i\leq 0$ et
le faisceau $\CH^2(Rf_*\BZ_Y(1))$ est le faisceau \'etale associ\'e au pr\'efaisceau:
$$U\lto \BH^2(f^{-1}U,\BZ(1))=\Pic(f^{-1}U).$$
Localement pour la topologie \'etale,  $f^{-1}U\iso U\times \BG^N_m$ et $U$ est le spectre d'un anneau local r\'egulier. 
Dans ce cas on a $0\cong \Pic(U)\cong \Pic(f^{-1}U)$.
 Donc $\CH^2(Rf_*\BZ_Y(1))=0$.
 Ainsi
 \begin{equation}\label{eq3.1}
 \CH^i(\BZ_X(1))=0\ \ \text{pour}\ \ i\neq 1,\ \    \CH^i(Rf_*\BZ_Y(1))=0\ \ \text{pour}\ \  i\leq 0\ \ \text{et}\ \ i=2.
 \end{equation}

D'apr\`es (\ref{Zde1}), on a une suite exacte longue:
$$\cdots \to \CH^i(\BZ_X(1))\xrightarrow{\phi_i} \CH^i(Rf_*\BZ_Y(1))\to \CH^i(\BZ_f(1))\to \CH^{i+1}(\BZ_X(1))\xrightarrow{\phi_{i+1}} \cdots.$$
D'apr\`es  \cite[Prop. 1.4.2]{CTS}, on a une suite exacte:
\begin{equation}\label{e97}
0\to \BG_{m,X}\to f_*\BG_{m,Y}\to T^*\to 0
\end{equation}
et donc $\phi_1$ est injectif de conoyau $T^*$.
D'apr\`es \eqref{eq3.1}, on a $\CH^1(\BZ_f(1))\cong T^*$ et $\CH^i(\BZ_f(1))=0$ pour $i\leq 0$ et $i=2$.
Ceci donne le premier \'enonc\'e.

 Le deuxi\`eme \'enonc\'e r\'esulte du fait que: $T^*\otimes^L-\cong T^*\otimes -$.
 
Le troisi\`eme \'enonc\'e r\'esulte du fait que: d'apr\`es (\ref{0opluse1}), on a $\BG_m\otimes^L (D_{\acute{e}t}^{\geq 3}(X))\sbt D_{\acute{e}t}^{\geq 2}(X)$.
\qed
\end{proof}

La Proposition \ref{Zf1} induit un triangle dans $D^+_{\acute{e}t}(X)$:
\begin{equation}\label{Zde12}
\BZ_X(1)\xrightarrow{f^{\#}(1)} R^{\leq 2}f_*\BZ_Y(1)\xrightarrow{d(1)} \BZ^{\leq 2}_f(1)\ra \BZ_X(1)[1].
\end{equation}
et un isomorphisme:
\begin{equation}\label{Zf1-e}
\BG_m\otimes T^*\cong \CH^2(\BZ_X(1)\otimes^L\BZ^{\leq 2}_f(1))\cong \CH^2(\BZ_X(1)\otimes^L\BZ_f(1)).
\end{equation}

Si $Y\cong X\times T$, la projection $X\times T\to T$ induit une section de (\ref{Zde12}):
\begin{equation}\label{Zf1-e1}
T^*[-1]\cong \tau^{\leq 2}\BZ_f(1)\to Rf_*\BZ_Y(1).
\end{equation}

\medskip

Le r\'esultat principal de cette section est la Proposition \ref{cohzf2},
qui calcule   $\BZ_f(2)$.

\medskip

Soit $T_0:=\BG_m^N$.
Puisque $H^0(T_0,\CK_1)=k[T_0]^{\times}\supset T_0^*$,
ceci induit un homomorphisme canonique $T_0^* \to H^0(T_0,\CK_1)$.
Pour chaque vari\'et\'e lisse int\`egre $U$ et tout $n\in \BN$, le cup-produit 
$$  [H^n(U,\CK_2)\otimes H^0(T_0,\CK_0)]\oplus [H^n(U,\CK_1)\otimes H^0(T_0,\CK_1)]\xrightarrow{\cup} H^n(U\times T_0,\CK_2) $$
induit un homomorphisme:
\begin{equation}\label{Ktore-e1}
H^n(U,\CK_2)\oplus [H^n(U,\CK_1)\otimes T_0^*]\xrightarrow{\cup} H^n(U\times T_0,\CK_2). 
\end{equation}

\begin{lem}\label{Ktore}
Soit $T_0:=\BG_m^N$ et $U$ une vari\'et\'e lisse int\`egre.
Alors, pour  tout $n\in \BN$, 
on a une suite exacte canonique (o\`u $\cup$ est d\'efini dans (\ref{Ktore-e1})): 
$$0\to H^0(U,\CK_2)\oplus [H^0(U,\CK_1)\otimes T_0^*]\xrightarrow{\cup} H^0(U\times T_0,\CK_2)\to
 H^0(U,\CK_0)\otimes\wedge^2T_0^* \to 0.$$
 et un isomorphisme $H^n(U,\CK_2)\oplus [H^n(U,\CK_1)\otimes T_0^*]\xrightarrow{\cup} H^n(U\times T_0,\CK_2)$  pour $n\geq 1$.
 \end{lem}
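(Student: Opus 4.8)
The plan is to reduce to the case of a single multiplicative factor and then induct on $N$. The basic input is the K\"unneth-type decomposition
$$H^n(V\times \BG_m,\CK_j)\cong H^n(V,\CK_j)\oplus \big(H^n(V,\CK_{j-1})\cup[t]\big),$$
valid for any smooth integral $V$, where $t$ is the coordinate on $\BG_m$ and $[t]\in H^0(\BG_m,\CK_1)=k[\BG_m]^\times$ its class. Granting this for each factor, an induction on $N$ produces
$$H^n(U\times \BG_m^N,\CK_j)\cong \bigoplus_{i=0}^{N}H^n(U,\CK_{j-i})\otimes \wedge^iT_0^*,$$
after which it only remains to specialise $j=2$ and to identify the resulting map with the cup product (\ref{Ktore-e1}).

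First I would establish the single-factor formula. Write $V\times\BA^1\supset V\times\BG_m$ with closed complement $V\times\{0\}\cong V$ of codimension $1$. The localisation exact sequence for the Gersten complexes (\ref{resolutiongersten}), combined with Quillen homotopy invariance $H^n(V\times\BA^1,\CK_j)\cong H^n(V,\CK_j)$, gives a long exact sequence
$$\cdots\to H^{n-1}(V,\CK_{j-1})\xrightarrow{\partial} H^n(V,\CK_j)\xrightarrow{\pi^*}H^n(V\times\BG_m,\CK_j)\to H^n(V,\CK_{j-1})\xrightarrow{\partial}\cdots,$$
where $\pi:V\times\BG_m\to V$ is the projection. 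The unit section $s:V\to V\times\BG_m$, $v\mapsto(v,1)$, satisfies $s^*\pi^*=\mathrm{id}$, so $\pi^*$ is split injective; by exactness every boundary map $\partial$ vanishes, leaving short exact sequences $0\to H^n(V,\CK_j)\to H^n(V\times\BG_m,\CK_j)\to H^n(V,\CK_{j-1})\to 0$. These split canonically: cup product with $[t]$ is a section, the residue at $0$ of $[t]$ being $1$, so that $\partial(\pi^*\beta\cup[t])=\beta$ by the Leibniz rule for the boundary.

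Iterating over the factors $t_1,\dots,t_N$ of $T_0=\BG_m^N$ decomposes $H^n(U\times T_0,\CK_j)$ into summands $H^n(U,\CK_{j-|S|})\cup[t_S]$ indexed by subsets $S\subseteq\{1,\dots,N\}$, where $[t_S]$ is the ordered cup product of the classes $[t_i]$, $i\in S$. Since the product on the K-cohomology sheaves is graded-anticommutative on $\CK_1$-classes, $[t_a]\cup[t_b]=-[t_b]\cup[t_a]$, the subsets of size $i$ assemble into $\wedge^iT_0^*$ rather than a tensor or symmetric power, giving the displayed $\bigoplus_i H^n(U,\CK_{j-i})\otimes\wedge^iT_0^*$. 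For $j=2$ only $i=0,1,2$ survive, as $\CK_m=0$ for $m<0$: the term $i=0$ is $H^n(U,\CK_2)$, the term $i=1$ is $H^n(U,\CK_1)\otimes T_0^*$, and the term $i=2$ is $H^n(U,\CK_0)\otimes\wedge^2T_0^*$. As $\CK_0=\BZ$ is the constant sheaf and $U$ is integral, $H^0(U,\CK_0)=\BZ$ while $H^n(U,\CK_0)=0$ for $n\geq1$. Thus for $n\geq1$ the last term drops out, yielding the asserted isomorphism, and for $n=0$ the first two summands form the image of $\cup$ with cokernel $H^0(U,\CK_0)\otimes\wedge^2T_0^*$, giving the short exact sequence.

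The \emph{main point} requiring care is to verify that the abstract splitting produced by the localisation sequences coincides with the explicit cup product (\ref{Ktore-e1}). Concretely, the inclusion $T_0^*\hookrightarrow H^0(T_0,\CK_1)=k[T_0]^\times$ sends a character $\chi=\prod_i t_i^{a_i}$ to $\sum_i a_i[t_i]$, so $\pi^*\beta\cup[\chi]=\sum_i a_i(\pi^*\beta\cup[t_i])$ is exactly the linear combination dictated by the decomposition; hence the graded pieces $i=0$ and $i=1$ are precisely the image of (\ref{Ktore-e1}). The compatibility of the residue with cup product (the Leibniz rule invoked for the section) and the graded-anticommutativity that produces $\wedge^2T_0^*$ in place of $\Sym^2T_0^*$ are the two technical verifications on which the whole computation rests.
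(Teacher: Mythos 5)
Your proof is correct, and at bottom it is the same computation as the paper's: everything rests on the statement that $H^*(U\times\BG_m^N,\CK_*)$ is a free module over $H^*(U,\CK_*)$ with basis the classes $[t_{i_1}]\cup\cdots\cup[t_{i_q}]$, $1\le i_1<\cdots<i_q\le N$, followed by the same bookkeeping for $j=2$. The difference is in how this decomposition is obtained. The paper does not prove it: it transposes the problem into Milnor $K$-theory, i.e.\ into the groups $A^n(-,K^M_j)$ of the cycle-module formalism of Rost \cite{Ro}, quotes Merkurjev (\cite[Prop. 5.5]{Mer1} for $N=1$, \cite[Cor. 5.6]{Mer1} for the induction on $N$), and only at the end returns to Quillen $K$-theory via $K^M_i\iso K_i$ for $i\le 2$ and the Gersten resolution (\ref{resolutiongersten}). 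You instead reprove the case $N=1$ directly for the Quillen sheaves $\CK_j$ --- localization for $V\times\{0\}\sbt V\times\BA^1$, homotopy invariance, split injectivity of $\pi^*$ via the unit section, and the section $\beta\mapsto\pi^*\beta\cup[t]$ of $\partial$ given by the projection formula --- which is precisely the argument behind Merkurjev's proposition, and then induct on the factors. What the paper's detour buys is that localization and the projection formula (your two ``technical verifications'') are available off the shelf in the cycle-module setting; your route needs them for Quillen $K$-cohomology, which is legitimate but is most cheaply justified exactly as the paper does, by observing that in the range $j\le 2$ the two theories agree. What your route buys is a self-contained argument with no change of theory midstream, and it makes visible why the splitting is realized by the explicit cup product (\ref{Ktore-e1}).

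One point should be tightened. Skew-symmetry $[t_a]\cup[t_b]=-[t_b]\cup[t_a]$ alone does not make a pairing factor through $\wedge^2T_0^*$: one must also kill the diagonal, and $\{\chi,\chi\}=\{\chi,-1\}$ need not vanish. What is true, and what is needed, is that $a\cup\{\chi,\chi\}=a\cup\{\chi,-1\}=\pm\,(a\cup\{-1\})\cup\{\chi\}$ lies in the image of the summand $H^n(U,\CK_1)\otimes T_0^*$, because $-1$ is a constant unit pulled back from $U$; hence the pairing becomes alternating exactly in the cokernel. This is what makes the surjection onto $H^0(U,\CK_0)\otimes\wedge^2T_0^*$ canonical, i.e.\ independent of the chosen basis of $T_0^*$ and compatible with automorphisms of $T_0$ --- the form in which the lemma is actually used afterwards (Lemme \ref{Ktoremotivic}, Proposition \ref{cohzf2}) to glue the local computation into a statement about \'etale sheaves.
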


\begin{proof}
D'apr\`es (\ref{resolutiongersten}), on a $H^n(U,\CK_0)=0$ pour $n\geq 1$.

Notons $K_i^M$ les groupes de $K$-th\'eorie alg\'ebrique de Milnor. 
Par \cite[\S 4]{Mer1}, pour toute vari\'et\'e lisse $X$, on a un complexe de Gersten: 
$$0\to \oplus_{x\in X^{(0)}}K^M_j(k(x))\to \oplus_{x\in X^{(1)}}K^M_{j-1}(k(x))\to \cdots \to \oplus_{x\in X^{(j)}}\BZ \to 0$$
et on note $A^i(X,K^M_j)$ les groupes de cohomologie de ce complexe.
Donc $A^i(X,K^M_0)=0$ pour $i\geq 1$.

Soit $f_1,\dots,f_N$ une base de $T_0^*$. Ainsi $\{f_{i_1},\dots,f_{i_q}\}_{1\leq i_1<\dots<i_q\leq N}$ est un \'el\'ement de $A^0(T_0,K_q)$.
Le $A^*(U,K_*)$-module $A^*(U\times T_0,K_*)$ est libre avec une base consistant en les \'el\'ements 
$\{f_{i_1},\dots,f_{i_q}\}$ pour $q=0,1,\cdots, N$ et tout $q$-uplet $1\leq i_1< i_2<\cdots <i_q\leq N$.
Pour $N=1$, ceci est  \cite[Proposition 5.5]{Mer1}. Le cas g\'en\'eral en d\'ecoule par r\'ecurrence
(cf. \cite[Corollaire 5.6]{Mer1}).

Pour $K_2^M$, on a 
$$A^n(U\times T_0,K_2^M)\cong A^n(U,K_2^M)\oplus [\oplus_{i}A^n(U,K_1^M)\cup \{f_i\}]\oplus
 [\oplus_{i<j}A^n(U,K_0^M)\cup \{f_i,f_j\}]. $$
Alors le cup-produit induit une injection (l'analogue de (\ref{Ktore-e1}))
$$A^n(U,K_2^M)\oplus [A^n(U,K_1^M)\otimes T_0^*]\stackrel{\cup_{1,1}}{\hookrightarrow} A^n(U\times T_0,K_2^M)$$ 
et $\coker (\cup_{1,1})\cong \oplus_{i<j}A^n(U,K_0^M)\cup \{f_i,f_j\}$.
Donc le morphisme (cf. \cite[(5.8)]{Mer1}):
$$A^n(U,K_0^M)\otimes\wedge^2T_0^*\to \coker (\cup_{1,1}): a\otimes (f_i\wedge f_j)\to a\cup \{f_i,f_j\} $$
est bien d\'efini et c'est un isomorphisme.
Ceci donne une suite exacte 
$$0\to A^n(U,K^M_2)\oplus [A^n(U,K^M_1)\otimes T_0^*]\xrightarrow{\cup} A^n(U\times T_0,K^M_2)\to A^n(U,K^M_0)\otimes\wedge^2T_0^* \to 0.$$
Donc $\cup:\  A^n(U,K^M_2)\oplus [A^n(U,K^M_1)\otimes T_0^*]\iso A^n(U\times T_0,K^M_2)$ pour $n\geq 1$.

Pour tout corps $F$ et $i=0,1,2$, on a $K_i^M(F)\iso K_i(F)$ (voir D3 dans \cite[(1.12)]{Ro}). 
D'apr\`es (\ref{resolutiongersten}), pour $j=0,1,2$, on a donc un isomorphisme canonique $A^i(X,K_j^M)\iso H^i(X,\CK_j)$.
Ceci donne le r\'esultat annonc\'e.
\qed
\end{proof}

\begin{lem}\label{BZf0}
Les faisceaux $\CH^i(\BZ_f(2))$ sont nuls pour $i\leq 1$ et $i=3$.
\end{lem}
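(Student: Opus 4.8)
The plan is to compute each sheaf $\CH^i(\BZ_f(2))$ stalk by stalk. Fix a geometric point $\overline{x}$ of $X$ and set $S:=\Spec\CO^{sh}_{X,\overline{x}}$. Over this strictly henselian, regular local scheme one has $\Pic(S)=0$ and $T_S\cong\BG_m^N=:T_0$ (as $\mathrm{char}\,k=0$ and the residue field is separably closed), so the torsor trivialises: $Y_S:=Y\times_XS\cong S\times T_0$, and the projection $f_S$ carries the unit section $s\colon S\to S\times T_0$. Since $S$ is strictly henselian, $\CH^i(C)_{\overline{x}}=\BH^i(S,C|_S)$ for every complex $C$, and \'etale base change gives $(Rf_*\BZ_Y(2))|_S\cong R(f_S)_*\BZ_{Y_S}(2)$. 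Restricting the triangle \eqref{Zde2} to $S$ thus produces a long exact sequence
$$\cdots\to\BH^i(S,\BZ(2))\xrightarrow{\alpha_i}\BH^i(S\times T_0,\BZ(2))\to\CH^i(\BZ_f(2))_{\overline{x}}\to\BH^{i+1}(S,\BZ(2))\to\cdots,$$
with $\alpha_i=f_S^*$. The section gives $s^*\alpha_i=\mathrm{id}$, so every $\alpha_i$ is split injective and the sequence collapses to
$$\CH^i(\BZ_f(2))_{\overline{x}}\cong\coker\big(\alpha_i\colon\BH^i(S,\BZ(2))\to\BH^i(S\times T_0,\BZ(2))\big).$$
It therefore suffices to show this cokernel vanishes for $i\le1$ and $i=3$.

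For $i\le0$ the target $\BH^i(S\times T_0,\BZ(2))$ is zero, because $\BZ(2)$ is supported in degrees $1$ and $2$; hence $\coker\alpha_i=0$. For $i=3$, Theorem \ref{motivic}(i) identifies the target with $H^1(S\times T_0,\CK_2)$. Applying Lemma \ref{Ktore} with $n=1$ over the smooth \'etale neighbourhoods $U$ of $\overline{x}$ and passing to the limit gives $H^1(S\times T_0,\CK_2)\cong H^1(S,\CK_2)\oplus(\Pic(S)\otimes T_0^*)$; both summands vanish, since $\Pic(S)=0$ and, the Gersten complex \eqref{resolutiongersten} being a resolution, the regular local scheme $S$ carries no higher $\CK_2$-cohomology. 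So $\coker\alpha_3=0$ as well.

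The remaining case $i=1$ is the crux. By Theorem \ref{motivic}(i), $\alpha_1$ is the pullback $K_{3,indec}(F)\to K_{3,indec}(F(t_1,\dots,t_N))$ along the purely transcendental extension $F:=k(S)\hookrightarrow F(t_1,\dots,t_N)=k(S\times T_0)$, and I must show it is an isomorphism. Reducing to one variable, Quillen's localisation sequence for $\BA^1_F$ together with homotopy invariance yields a split exact sequence $0\to K_3(F)\to K_3(F(t))\to\bigoplus_xK_2(F(x))\to0$, and the Bass--Tate sequence provides the analogous one for Milnor $K$-theory; since $K_2=K^M_2$ by Matsumoto's theorem the boundary terms agree, so passing to the quotients $K_{3,indec}=K_3/K^M_3$ gives $K_{3,indec}(F)\iso K_{3,indec}(F(t))$, and iteration over the $N$ variables finishes the case. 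The main obstacle is exactly this invariance of indecomposable $K_3$ under a purely transcendental extension; by contrast the cases $i\le0$ and $i=3$ are soft, resting only on the support of $\BZ(2)$, Lemma \ref{Ktore}, and Gersten's vanishing over a regular local ring.
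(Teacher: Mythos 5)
Your proof is correct and is essentially the paper's own argument recast stalkwise: both reduce to the \'etale-local situation where the torsor becomes $S\times\BG_m^N$ over a (strictly henselian) local base, use the zero/unit section to get injectivity of the pullback maps, dispose of degrees $\leq 0$ by the support of $\BZ(2)$, handle degree $3$ via Lemma \ref{Ktore} (together with the vanishing of $\Pic$ and of higher $\CK_2$-cohomology over the local base), and reduce degree $1$ to the invariance of $K_{3,indec}$ under purely transcendental extensions. The only real divergence is that the paper simply cites \cite[Lem. 6.2]{EKLV} for that last invariance, whereas you re-derive it from Quillen localization, Bass--Tate and Matsumoto; that derivation is sound, provided one also invokes the standard (but not purely formal) compatibility of the Milnor tame symbol with the boundary map of Quillen's localization sequence, which your snake-lemma step tacitly uses.
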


\begin{proof}
On suit la d\'emonstration dans \cite[pp. 10]{Mer}

Puisque $\BZ_X(2)$ et  $\BZ_Y(2)$ sont support\'es en degr\'es 1 et 2, 
on a $\CH^i(\BZ_f(2))=0$ pour $i<0$,  $R^{i}f_*\BZ_Y(2)=0$ pour $i\leq 0$, $R^if_*\BZ_Y(2)\cong \CH^i(\BZ_f(2))$ pour $i\geq 3$ et $\CH^3(\BZ_X(2))=0$. 
Ainsi  
on a une suite exacte longue dans la cat\'egorie des faisceaux \'etales sur $X$ :
$$\xymatrix@R=1em{0\ar[r]&\CH^0(\BZ_f(2))\ar[r]&\CH^1(\BZ_X(2))\ar[r]^s&R^1f_*\BZ_Y(2)\ar[r]&\\
\CH^1(\BZ_f(2))\ar[r]&\CH^2(\BZ_X(2))\ar[r]^{s_1}&R^2f_*\BZ_Y(2)\ar[r]&\CH^2(\BZ_f(2))\ar[r]&0.
}$$

Les faisceaux $\CH^1(\BZ_X(2))$ et $R^1f_*\BZ_Y(2)$ sont les faisceaux \'etales associ\'es aux pr\'efaisceaux:
$$U\lto \BH^1(U,\BZ(2))=K_{3,ind}k(U)\ \ \mathrm{et}\ \ U\lto \BH^1(f^{-1}U,\BZ(2))=K_{3,ind}k(f^{-1}U).$$
Localement pour la topologie \'etale, $f^{-1}U\iso U\times \BG^N_m$, et dans ce cas on a $K_{3,ind}k(U)\cong K_{3,ind}k(f^{-1}U)$, car $K_{3,ind}k(U)\cong K_{3,ind}k(U\times \BG_m^N)$ (cf. \cite[Lem. 6.2]{EKLV}). Donc $s$ est un isomorphisme.

Les faisceaux $\CH^2(\BZ_X(2))$ et $R^2f_*\BZ_Y(2)$ sont les faisceaux \'etales associ\'es aux pr\'efaisceaux:
$$U\lto \BH^2(U,\BZ(2))=H^0(U,\CK_2)\ \ \mathrm{et}\ \ U\lto \BH^2(f^{-1}U,\BZ(2))=H^0(f^{-1}U,\CK_2).$$
Localement pour la topologie \'etale, $f^{-1}U\iso U\times \BG^N_m$, et dans ce cas le morphisme canonique $H^0(U,\CK_2)\ra H^0(f^{-1}U,\CK_2)$ est injectif 
puisque que $f^{-1}U \to U$ admet alors une section.
Donc $s_1$ est injectif.

On a donc \'etabli $\CH^i(\BZ_f(2))=0$  pour $i\leq 1$.

Le faisceau $R^3f_*\BZ_Y(2)$ est le faisceau \'etale associ\'e au pr\'efaisceau:
$$U\lto \BH^3(f^{-1}U,\BZ(2))=H^1(f^{-1}U,\CK_2).$$
Localement pour la topologie \'etale, $f^{-1}U\iso U\times \BG^N_m$. 
Notons $T_0:=\BG_m^N$.
Dans ce cas, par le lemme \ref{Ktore},
on a un isomorphisme:
$$ H^1(U,\CK_2)\oplus [H^1(U,\CK_1)\otimes T_0^*]\to H^1(f^{-1}U,\CK_2).$$
Puisque $\CK_*$ est un faisceau de Zariski, le faisceau associ\'e \`a \{$U\lto H^1(U,\CK_*)$\} est 0
et donc  le faisceau associ\'e \`a \{$U\lto H^1(f^{-1}U,\CK_2)$\} est 0.
Ainsi $R^3f_*\BZ_Y(2)$ est 0, et $\CH^3(\BZ_f(2))=0$.
\qed
\end{proof}

Notons  $\BZ_f(\otimes )$ le c\^one du morphisme $\BZ_X(1)\otimes^L \BZ_X(1)\to R^{\leq 2}f_*\BZ_Y(1)\otimes^L R^{\leq 2}f_*\BZ_Y(1)$. 
Soit $\theta$ la composition des morphismes
$$R^{\leq 2}f_*\BZ_Y(1)\otimes^L R^{\leq 2}f_*\BZ_Y(1)\to Rf_*\BZ_Y(1)\otimes^L Rf_*\BZ_Y(1) \xrightarrow{\theta_1} $$
$$Rf_*(\BZ_Y(1)\otimes^L f^*Rf_*\BZ_Y(1)) \xrightarrow{\theta_2}Rf_*(\BZ_Y(1)\otimes^L \BZ_Y(1))\to Rf_*\BZ_Y(2).$$
o\`u $\theta_1$ est le morphisme canonique induit par $\otimes^L$ (cf. \cite[p. 306]{Fu})
 et $\theta_2$ est induit par le morphisme d'adjonction $f^*Rf_*\BZ_Y(1)\to \BZ_Y(1)$.
Le morphisme $\theta$ induit un diagramme commutatif de triangles :
\begin{equation}\label{h11def}
\begin{gathered}\xymatrix@C=1.7em{\BZ_X(1)\otimes^L \BZ_X(1)\ar[r]\ar[d]^=&\BZ_X(1)\otimes^L R^{\leq 2}f_*\BZ_Y(1)\ar[r]^{id\times d(1)}\ar[d]^{f^{\#}(1)\times id}&\BZ_X(1)\otimes^L \BZ^{\leq 2}_f(1)\ar[r]\ar@{-->}[d]^{\exists\ h_1}\ar@{}[rd]|{(1)}&\BZ_X(1)\otimes^L \BZ_X(1)[1]\ar[d]\\
\BZ_X(1)\otimes^L \BZ_X(1)\ar[r]\ar[d]^{\cup}&R^{\leq 2}f_*\BZ_Y(1)\otimes^L R^{\leq 2}f_*\BZ_Y(1)\ar[r]\ar[d]^{\theta}&\BZ_f(\otimes )\ar[r]\ar@{-->}[d]^{\exists\ h_{\otimes}}\ar@{}[rd]|{(2)}&\BZ_X(1)\otimes^L \BZ_X(1)[1]\ar[d]^{\cup[1]}\\
\BZ_X(2)\ar[r]&Rf_*\BZ_Y(2)\ar[r]^{d(2)}&\BZ_f(2)\ar[r]&\BZ_X(2)[1], 
}\end{gathered}\end{equation}
o\`u $h_1$ et $h_{\otimes} $ sont donn\'es par les axiomes des cat\'egories triangul\'ees.
Le morphisme $h_{\otimes}\circ h_1$ induit un morphisme
$$h_{1,1}:\ \BG_{m,X}\otimes T^* \stackrel{(\ref{Zf1-e})}{\cong} \CH^2(\BZ_X(1)\otimes^L \BZ^{\leq 2}_f(1))\ra \CH^2(\BZ_f(2)).$$

 Par le Lemme \ref{Ktore}, il y a une composition de morphismes de groupes
 $$H^0(Y\times T,\CK_2)\ra H^0((Y\times T)_{\bk},\CK_2)^{\Gamma_k}\ra (H^0(Y_{\bk},\CK_0)\otimes \wedge^2T^*)^{\Gamma_k}\cong (\wedge^2T^*)^{\Gamma_k}\iso (\wedge^2T^*)(X),$$
 o\`u $(\wedge^2T^*)(X)$ est le groupe de sections du faisceau $(\wedge^2T^*)$ sur $X$.
D\'efinissons:
$$h'_{0,2}(X): H^0(Y,\CK_2)\xrightarrow{\rho^*} H^0(Y\times T,\CK_2)\ra (\wedge^2T^*)(Y)\cong (\wedge^2T^*)(X),$$
o\`u $\rho: Y\times T\to Y: (y,t)\mapsto t\cdot y$ est l'action.
Ceci induit un morphisme de faisceaux:
$$h'_{0,2}: R^2f_*\BZ_Y(2)\to \wedge^2T^*.$$

\begin{lem}\label{Ktoremotivic}
Supposons que $T\cong \BG_m^N$ et $Y\to X$ est un $T$-torseur trivial.
Alors le complexe 
$$0\to \CH^2(\BZ_X(2))\oplus (\BG_{m,X}\otimes T^*)\xrightarrow{ (f^{\#}(2),h_{1,1})} R^2f_*\BZ_Y(2) \xrightarrow{h'_{0,2}}  \wedge^2T^*\to 0$$
est exactement la suite exacte de faisceaux \'etales associ\'e \`a la suite exacte de pr\'efaisceaux qui envoie $U\in \mathrm{Sch}/X$ \`a la suite dans le Lemme \ref{Ktore}.
\end{lem}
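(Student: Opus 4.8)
Le plan est de montrer que le complexe de l'énoncé n'est autre que l'image, par le foncteur de faisceautisation étale $a$, de la suite exacte de préfaisceaux fournie (fonctoriellement en $U$) par le Lemme \ref{Ktore}. Comme $a$ est exact, l'exactitude du complexe de faisceaux en résultera automatiquement, et tout le travail consiste à identifier, terme à terme et flèche à flèche, les deux suites.

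Je commencerais par les quatre termes. Par le Théorème \ref{motivic} (i) (cf. la preuve du Lemme \ref{BZf0}), $\CH^2(\BZ_X(2))$ est le faisceau associé à $U\mapsto\BH^2(U,\BZ(2))=H^0(U,\CK_2)$. Comme $Y\cong X\times T$, on a $f^{-1}U\cong U\times T$, de sorte que $R^2f_*\BZ_Y(2)$ est le faisceau associé à $U\mapsto\BH^2(U\times T,\BZ(2))=H^0(U\times T,\CK_2)$. Puisque $\CK_1\cong\BG_m$ et que la trivialisation $T\cong\BG_m^N$ fait de $T^*\cong\BZ^N$ un faisceau constant, le faisceau $\BG_{m,X}\otimes T^*$ est le faisceau associé à $U\mapsto H^0(U,\CK_1)\otimes T^*$; enfin $H^0(U,\CK_0)=\BZ$ pour $U$ connexe, donc $\wedge^2T^*$ est le faisceau associé à $U\mapsto H^0(U,\CK_0)\otimes\wedge^2T^*$. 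Chaque terme est ainsi le faisceautisé du terme correspondant du Lemme \ref{Ktore}.

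Je vérifierais ensuite les flèches. Le morphisme $f^{\#}(2)$ étant l'unité d'adjonction $\BZ_X(2)\to Rf_*\BZ_Y(2)$, il induit sur $\CH^2$ le tiré en arrière $H^0(U,\CK_2)\to H^0(U\times T,\CK_2)$, soit la première composante de $\cup$. Le morphisme $h'_{0,2}$ est, par sa définition même, induit par $\rho^*$ suivi de la projection sur le facteur $\wedge^2T^*$; un calcul direct sur la décomposition en module libre du Lemme \ref{Ktore} (avec $\rho^*\{f_i\}=\{s_i\}+\{t_i\}$ pour les coordonnées $s,t$ des deux copies de $T$, et la formule de Milnor pour $\{ab,cd\}$) montre que seule la composante $\{t_i,t_j\}$ subsiste, de sorte que $h'_{0,2}$ coïncide avec la projection sur le conoyau du Lemme \ref{Ktore}.

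L'obstacle principal est l'identification de $h_{1,1}$ avec la restriction de $\cup$ au facteur $H^0(U,\CK_1)\otimes T^*$. Comme $Y\cong X\times T$, la section \eqref{Zf1-e1} scinde le morphisme $id\times d(1)$ de \eqref{h11def}, ce qui relève $h_{1,1}$ en un morphisme $\widetilde h_{1,1}:\BG_{m,X}\otimes T^*\to R^2f_*\BZ_Y(2)$, obtenu en composant ce scindage avec $f^{\#}(1)\times id$ puis avec $\theta$, et dont la composée avec $d(2)$ redonne $h_{1,1}$; c'est ce relèvement qui figure dans l'énoncé. L'ingrédient décisif est que, par \eqref{motivice1} et \eqref{motivice2}, le cup-produit motivique $\BZ_Y(1)\otimes^L\BZ_Y(1)\to\BZ_Y(2)$ apparaissant dans la définition de $\theta$ induit sur les faisceaux $\CH^2$ exactement le produit en K-théorie $\CK_1\otimes\CK_1\to\CK_{2,\et}$. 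En suivant une classe $g\otimes\chi$ via l'isomorphisme \eqref{Zf1-e} — où $g\in H^0(-,\CK_1)$ et où $\chi\in T^*\subset H^0(T,\CK_1)$ provient de \eqref{Zf1-e1} — on obtient que $\widetilde h_{1,1}(g\otimes\chi)=g\cup\chi$, c'est-à-dire précisément la seconde composante de $\cup$. La difficulté technique réside dans le contrôle des tronqués $\BZ_f^{\leq 2}(1)$, $R^{\leq 2}f_*\BZ_Y(1)$ et des morphismes de liaison de \eqref{h11def}, mais il est assuré par la Proposition \ref{Zf1}, qui fournit $\BZ_f^{\leq 2}(1)\cong T^*[-1]$ et l'isomorphisme \eqref{Zf1-e}. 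Ces trois identifications acquises, le complexe est, flèche par flèche, la faisceautisation de la suite du Lemme \ref{Ktore}, dont l'exactitude est préservée par le foncteur exact $a$.
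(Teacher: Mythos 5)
Votre démonstration est correcte et suit pour l'essentiel la même voie que celle de l'article : l'identification de $h_{1,1}$ s'y fait exactement de la même manière, en relevant ce morphisme grâce à la section \eqref{Zf1-e1} (disponible parce que le torseur est trivial) puis en identifiant $\theta$ au cup-produit de K-théorie via \eqref{motivice1}, ce qui fait de $(f^{\#}(2),h_{1,1})$ le morphisme induit par \eqref{Ktore-e1}. La seule différence est mineure et porte sur $h'_{0,2}$ : vous développez directement les symboles ($\rho^*\{f_i\}=\{s_i\}+\{t_i\}$, seule la composante en $\{t_i,t_j\}$ survivant à la projection), là où l'article obtient la même identité $h'_{0,2}(U)=\phi_U$ de façon formelle à partir de la relation $(id_X\times m)\circ(id_X\times e_T\times id_T)=id_{X\times T}$ et de la commutativité du carré correspondant; les deux vérifications sont équivalentes.
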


\begin{proof}
Dans le diagramme (\ref{h11def}),  la section (\ref{Zf1-e1}) induit une section de $id\times d(1)$: $$s:\  \BZ_X(1)\otimes^L \BZ_f(1) \to \BZ_X(1)\otimes^L R^{\leq 2}f_*\BZ_Y(1) .$$
Donc $h_{1,1}$ est la composition $d(2)\circ\theta\circ (f^{\#}(1)\times id)  \circ s$.
D'apr\`es (\ref{motivice1}), $\theta$ est induit par le cup-produit de groupes de K-th\'eorie  $H^0(U\times T,\CK_1)\otimes H^0(U\times T,\CK_1)\to H^0(U\times T,\CK_2)$  
pour tout $U$ sur $X$,
 et donc $(f^{\#}(2),h_{1,1})$ est le morphisme induit par (\ref{Ktore-e1}) pour tout $ U$ sur $X$.

 Pour tout $U\in \mathrm{Sch}/X$, soit $\phi_U: H^0(U\times T,\CK_2)\to \wedge^2T^*$ le morphisme dans le Lemme \ref{Ktore}. Ceci induit un diagramme avec le carr\'e (5) commutatif
 $$\xymatrixcolsep{5pc}\xymatrix{H^0(X\times T,\CK_2)\ar[r]^-{(id_X\times m)^*}\ar[d]^{\phi_X}\ar@{}[rd]|{(4)}&
 H^0(X\times T\times T,\CK_2)\ar[r]^-{(id_X\times e_T\times id_T)^*}\ar[d]^{\phi_Y\circ (\tau\times id_T)^*}\ar@{}[rd]|{(5)}
 &H^0(X\times T)\ar[d]^{\phi_X}\\
 \wedge^2T^*\ar[r]^=&\wedge^2T^*\ar[r]^=&\wedge^2T^*,
 }$$
 o\`u $m: T\times T\to T$ la multiplication et $\tau: Y\cong X\times T$ une trivialisation.
 Puisque $(id_X\times m) \circ (id_X\times e_T\times id_T)=id_{X\times T}$, le carr\'e (4) est aussi commutatif.
 Donc $h'_{0,2}(X)=\phi_X$, et $h'_{0,2}$ est le morphisme associ\'e \`a $\phi_U$ pour tout $U\in \mathrm{Sch}/X$.
 \qed
\end{proof}

D'apr\`es le Lemme \ref{Ktoremotivic},
la composition $\CH^2(\BZ_X(2))\ra R^2f_*\BZ_Y(2)\xrightarrow{h'_{0,2}} \wedge^2T^*$ est nulle et il existe un morphisme
$$h_{0,2}: \CH^2(\BZ_f(2))\ra \wedge^2T^*.$$

 Nous pouvons maintenant \'etablir :

\begin{prop}\label{cohzf2}
On a $\tau^{\leq 3}\BZ_f(2)\cong \CH^2(\BZ_f(2))[-2]$ et une suite exacte de faisceaux \'etales sur $X$:
\begin{equation}\label{h12}\xymatrix{0\ar[r]&\BG_{m,X}\otimes T^*\ar[r]^{h_{1,1}}&\CH^2(\BZ_f(2))\ar[r]^{h_{0,2}}&\wedge^2T^*\ar[r]&0.
}\end{equation}
\end{prop}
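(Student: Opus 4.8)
On d\'eduit les deux assertions des deux suites exactes courtes d\'ej\`a disponibles. D'abord, l'isomorphisme $\tau^{\leq 3}\BZ_f(2)\cong \CH^2(\BZ_f(2))[-2]$ r\'esulte directement du lemme \ref{BZf0}: celui-ci donne $\CH^i(\BZ_f(2))=0$ pour $i\leq 1$ et pour $i=3$, et comme $\tau^{\leq 3}$ annule de plus la cohomologie en degr\'e $>3$, le complexe $\tau^{\leq 3}\BZ_f(2)$ a sa cohomologie concentr\'ee en le seul degr\'e $2$. Dans $D^+_{\acute{e}t}(X)$, un tel complexe est canoniquement isomorphe au d\'ecal\'e de son unique faisceau de cohomologie (on applique successivement $\tau^{\geq 2}$ et $\tau^{\leq 2}$), d'o\`u l'isomorphisme annonc\'e.

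Pour la suite exacte (\ref{h12}), on extrait d'abord du triangle (\ref{Zde2}) sa suite exacte longue de faisceaux de cohomologie. L'annulation $\CH^1(\BZ_f(2))=0$ (lemme \ref{BZf0}) assure l'exactitude \`a gauche et l'annulation $\CH^3(\BZ_X(2))=0$ (car $\BZ_X(2)$ est support\'e en degr\'es $1$ et $2$) assure l'exactitude \`a droite, d'o\`u la suite exacte courte
\begin{equation}
0\to \CH^2(\BZ_X(2))\xrightarrow{f^{\#}(2)} R^2f_*\BZ_Y(2)\xrightarrow{d(2)} \CH^2(\BZ_f(2))\to 0. \tag{A}
\end{equation}
Par ailleurs, lorsque $T$ est d\'eploy\'e et le torseur trivial, le lemme \ref{Ktoremotivic} fournit la suite exacte
\begin{equation}
0\to \CH^2(\BZ_X(2))\oplus(\BG_{m,X}\otimes T^*)\xrightarrow{(f^{\#}(2),\,\widetilde{h}_{1,1})} R^2f_*\BZ_Y(2)\xrightarrow{h'_{0,2}}\wedge^2T^*\to 0, \tag{B}
\end{equation}
o\`u $\widetilde{h}_{1,1}\colon \BG_{m,X}\otimes T^*\to R^2f_*\BZ_Y(2)$ est le rel\`evement de $h_{1,1}$ issu de la construction du lemme \ref{Ktoremotivic}, v\'erifiant $d(2)\circ\widetilde{h}_{1,1}=h_{1,1}$.

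Comme l'exactitude d'une suite de faisceaux \'etales est locale pour la topologie \'etale, et qu'\'etale-localement sur $X$ le tore $T$ se d\'eploie et le torseur $f\colon Y\to X$ se trivialise, on peut v\'erifier l'exactitude de (\ref{h12}) \'etale-localement, o\`u (B) est disponible. Le point technique, et l'obstacle principal, est de s'assurer que les morphismes canoniques globaux $h_{1,1}$ et $h_{0,2}$ (ainsi que $h'_{0,2}$ et l'annulation $h'_{0,2}\circ f^{\#}(2)=0$ servant \`a d\'efinir $h_{0,2}$) se restreignent bien aux morphismes du lemme \ref{Ktoremotivic}; cela tient \`a ce que tous les ingr\'edients (cup-produit, adjonction, action $\rho$, r\'esolutions de Gersten) commutent au changement de base \'etale. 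Ceci admis, (\ref{h12}) r\'esulte d'une chasse au diagramme combinant (A) et (B). Si $h_{1,1}(x)=d(2)(\widetilde{h}_{1,1}(x))=0$, alors $\widetilde{h}_{1,1}(x)\in\ker d(2)=\Im f^{\#}(2)$ par (A), donc $\widetilde{h}_{1,1}(x)=f^{\#}(2)(a)$ et $(f^{\#}(2),\widetilde{h}_{1,1})(-a,x)=0$; l'injectivit\'e dans (B) force $x=0$, d'o\`u l'injectivit\'e de $h_{1,1}$. Ensuite $h_{0,2}$ est surjectif car $h'_{0,2}$ l'est et $h_{0,2}\circ d(2)=h'_{0,2}$. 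Enfin, $d(2)$ \'etant surjectif, $\ker h_{0,2}=d(2)(\ker h'_{0,2})$; comme $\ker h'_{0,2}=\Im f^{\#}(2)+\Im\widetilde{h}_{1,1}$ par (B) et $d(2)\circ f^{\#}(2)=0$, on obtient $\ker h_{0,2}=d(2)(\Im\widetilde{h}_{1,1})=\Im h_{1,1}$. La suite (\ref{h12}) est donc exacte.
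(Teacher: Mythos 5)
Your proof is correct and takes essentially the same approach as the paper: reduce to the \'etale-local situation where the torus splits and the torsor trivializes, then apply Lemma \ref{Ktoremotivic}. The global short exact sequence (A) extracted from the triangle (\ref{Zde2}) and the diagram chase combining it with (B) are precisely what the paper leaves implicit in its one-line deduction.
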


\begin{proof}
D'apr\`es le Lemme \ref{BZf0}, il suffit de montrer que localement pour la topologie \'etale la suite est exacte.
Localement pour la topologie \'etale, $Y\iso X\times \BG_m^N$, et le r\'esultat d\'ecoule donc du Lemme \ref{Ktoremotivic}.
\qed
\end{proof}

Explicitons ce que donne la cohomologie de cette suite exacte de faisceaux.

\begin{thm}\label{mainz2}
Soient $X$ une $k$-vari\'et\'e lisse g\'eom\'etriquement connexe,
 $T$ un $k$-tore et $f:Y\to X$ un $T$-torseur.
Alors on a deux suites exactes longues de $\Gamma_k$-modules:
$$0\to\bk[X]^\times \otimes T^* \to \BH^2(X_{\bk},\BZ_f(2))\to \wedge^2T^*\xrightarrow{h}
\Pic(X_{\bk})\otimes T^*\to \BH^3(X_{\bk},\BZ_f(2))\to 0$$
et
$$\xymatrix@R=1em{0\ar[r]&H^0(X_{\bk},\CK_2)\ar[r]&H^0(Y_{\bk},\CK_2)\ar[r]&\BH^2(X_{\bk},\BZ_f(2))\ar[r]&H^1(X_{\bk},\CK_2)\\
\ar[r]&H^1(Y_{\bk},\CK_2)\ar[r]&\BH^3(X_{\bk},\BZ_f(2))\ar[r]&\BH^4(X_{\bk},\BZ_X(2))\ar[r]&\BH^4(Y_{\bk},\BZ_Y(2)),
}$$
o\`u :
\begin{itemize}
\item[\rm (1)] le morphisme $h$ est donn\'e par: $a\wedge b\mapsto a\otimes \partial (b)-b\otimes \partial (a)$ avec $\partial : T^*\ra \Pic(X_{\bk})$
 le morphisme associ\'e au torseur $Y \to X$ sous le tore $T$ (induit par (\ref{e97}));
\item[\rm (2)] la composition de morphismes
$$\Pic(X_{\bk})\otimes T^*\to \BH^3(X_{\bk},\BZ_f(2))\to \BH^4(X_{\bk},\BZ_X(2)),$$
est induite par l'intersection:
 $$\Pic(X_{\bk})\otimes T^*\xrightarrow{id\times \partial} \Pic(X_{\bk})\times \Pic(X_{\bk})\ra \CCH^2(X_{\bk})\to \BH^4(X_{\bk},\BZ_X(2)).$$
\end{itemize}
\end{thm}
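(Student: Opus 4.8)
Le plan est de d\'eduire les deux suites exactes longues respectivement de la suite exacte courte de faisceaux (\ref{h12}) et du triangle (\ref{Zde2}), puis d'identifier la fl\`eche $h$ et le compos\'e de l'assertion (2) en suivant les cup-produits. Posons $\CF:=\CH^2(\BZ_f(2))$. D'apr\`es la proposition \ref{cohzf2}, on a $\tau^{\leq 3}\BZ_f(2)\cong\CF[-2]$, et par le lemme \ref{BZf0} les faisceaux $\CH^i(\BZ_f(2))$ sont nuls pour $i\leq 1$ et $i=3$; le morphisme $\CF[-2]\to\BZ_f(2)$ a donc un c\^one dans $D^{\geq 4}_{\acute{e}t}(X)$, ce qui donne $\BH^n(X_{\bk},\BZ_f(2))\cong H^{n-2}(X_{\bk},\CF)$ pour $n\leq 3$. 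En particulier $\BH^1(X_{\bk},\BZ_f(2))=0$, $\BH^2(X_{\bk},\BZ_f(2))\cong H^0(X_{\bk},\CF)$ et $\BH^3(X_{\bk},\BZ_f(2))\cong H^1(X_{\bk},\CF)$.

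Pour la premi\`ere suite, j'appliquerai la suite exacte longue de cohomologie \'etale sur $X_{\bk}$ \`a la suite courte (\ref{h12}). Comme $T^*$ et $\wedge^2T^*$ sont des r\'eseaux, on a $H^i(X_{\bk},\BG_m\otimes T^*)\cong H^i(X_{\bk},\BG_m)\otimes T^*$ et $H^i(X_{\bk},\wedge^2T^*)\cong(\wedge^2T^*)\otimes H^i(X_{\bk},\BZ)$; d'o\`u $H^0(X_{\bk},\BG_m\otimes T^*)\cong\bk[X]^\times\otimes T^*$, $H^1(X_{\bk},\BG_m\otimes T^*)\cong\Pic(X_{\bk})\otimes T^*$ et, $X_{\bk}$ \'etant connexe, $H^0(X_{\bk},\wedge^2T^*)\cong\wedge^2T^*$. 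Le point cl\'e assurant la surjection finale est la nullit\'e $H^1(X_{\bk},\BZ)=\Hom_{\mathrm{cont}}(\pi_1^{\et}(X_{\bk}),\BZ)=0$, tout homomorphisme continu d'un groupe profini vers $\BZ$ discret \'etant d'image finie, donc nul; ainsi $H^1(X_{\bk},\wedge^2T^*)=0$. Avec les identifications de $\BH^2$ et $\BH^3$ du paragraphe pr\'ec\'edent, ceci fournit exactement la premi\`ere suite.

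Pour la deuxi\`eme suite, j'appliquerai la suite exacte longue d'hypercohomologie sur $X_{\bk}$ au triangle (\ref{Zde2}). Par d\'efinition de $Rf_*$ on a $\BH^n(X_{\bk},Rf_*\BZ_Y(2))\cong\BH^n(Y_{\bk},\BZ_Y(2))$, et le th\'eor\`eme \ref{motivic}(i) identifie $\BH^2(-,\BZ(2))$ \`a $H^0(-,\CK_2)$ et $\BH^3(-,\BZ(2))$ \`a $H^1(-,\CK_2)$, pour $X_{\bk}$ comme pour $Y_{\bk}$. La nullit\'e $\BH^1(X_{\bk},\BZ_f(2))=0$ fournit l'injection initiale $H^0(X_{\bk},\CK_2)\hookrightarrow H^0(Y_{\bk},\CK_2)$, et la suite se poursuit jusqu'au terme $\BH^4(Y_{\bk},\BZ_Y(2))$, comme annonc\'e.

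Le point central de la preuve sera l'identification explicite des fl\`eches (1) et (2). Pour (1), le morphisme de connexion $h$ est le produit de Yoneda avec la classe de l'extension (\ref{h12}); le torseur $Y\to X$ \'etant localement trivial pour la topologie \'etale, cette extension est gouvern\'ee par le type $\partial:T^*\to\Pic(X_{\bk})$ provenant de (\ref{e97}), et l'antisym\'etrie du cup-produit en K-th\'eorie (lemmes \ref{Ktore} et \ref{Ktoremotivic}) donnera la formule $a\wedge b\mapsto a\otimes\partial(b)-b\otimes\partial(a)$. Pour (2), je suivrai le diagramme (\ref{h11def}) : la fl\`eche $\Pic(X_{\bk})\otimes T^*\to\BH^3(X_{\bk},\BZ_f(2))$ provient de $h_{1,1}$, lui-m\^eme induit par le cup-produit $\BZ_X(1)\otimes^L\BZ_X(1)\to\BZ_X(2)$, tandis que le bord $\BH^3(X_{\bk},\BZ_f(2))\to\BH^4(X_{\bk},\BZ_X(2))$ du triangle (\ref{Zde2}) fait appara\^itre le morphisme de connexion $\BZ_f(1)\to\BZ_X(1)[1]$ qui r\'ealise $\partial$ (proposition \ref{Zf1}); en combinant, ce cup-produit devient le produit d'intersection $\Pic(X_{\bk})\otimes T^*\xrightarrow{id\times\partial}\Pic(X_{\bk})\times\Pic(X_{\bk})\to\CCH^2(X_{\bk})$ gr\^ace \`a (\ref{motivice2}). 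C'est ce dernier suivi des cup-produits \`a travers (\ref{h11def}), bien plus que l'exactitude formelle, qui constitue l'obstacle principal.
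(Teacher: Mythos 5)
The formal half of your plan is correct and coincides with the paper's: Proposition \ref{cohzf2} plus Lemme \ref{BZf0} give $\BH^n(X_{\bk},\BZ_f(2))\cong H^{n-2}(X_{\bk},\CH^2(\BZ_f(2)))$ for $n\leq 3$; the first sequence is then the cohomology sequence of (\ref{h12}), using $H^1(X_{\bk},\wedge^2T^*)=0$ (your argument via $\Hom_{\mathrm{cont}}(\pi_1^{\et}(X_{\bk}),\BZ)=0$ is fine, but note that the identification $H^1_{\et}(X_{\bk},\BZ)=\Hom_{\mathrm{cont}}(\pi_1^{\et}(X_{\bk}),\BZ)$ requires $X$ normal, which is where the smoothness enters); and the second sequence is the hypercohomology sequence of (\ref{Zde2}) combined with Th\'eor\`eme \ref{motivic}(i) and $\BH^1(X_{\bk},\BZ_f(2))=0$. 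Your reading of (2) --- commutativity of the squares (1), (2) of (\ref{h11def}) together with (\ref{motivice2}) --- is also exactly the paper's argument and can be completed as you describe.

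The genuine gap is in (1), and it is the heart of the theorem (it is precisely what yields $\BH^3(X_{\bk},\BZ_g(2))\cong\Sym^2\Pic(X_{\bk})$ in Th\'eor\`eme \ref{S2T}). Saying that $h$ is the Yoneda product with the class of the extension (\ref{h12}) only displaces the problem: one must \emph{compute} that extension class, i.e.\ relate it to the extension (\ref{e97}) which defines $\partial$, and the tools you cite cannot do this. Lemmes \ref{Ktore} and \ref{Ktoremotivic} identify the sequence (\ref{h12}) \'etale-locally, i.e.\ exactly where the torsor is trivial, $\partial=0$ and $h$ vanishes; they carry no information about the global boundary map, and ``l'antisym\'etrie du cup-produit en K-th\'eorie'' is not a statement about that boundary. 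What the paper actually does here is a substantial derived-category construction: it introduces $\BZ_f(\otimes)$, builds the $3\times 3$ diagram of triangles, invokes \cite[Prop.~1.1.11]{BBD} and Lemme \ref{leme97} to produce the two triangles (\ref{e98}) and (\ref{e99}) presenting $\BZ_f(\otimes)$ through $\BZ_X(1)\otimes^L\BZ_f^{\leq 2}(1)$ and through $\BZ_f^{\leq 2}(1)\otimes^L\BZ_X(1)$, uses the symmetry of $\BZ(1)\otimes^L\BZ(1)\to\BZ(2)$ to get $h_{\otimes}\circ h_1\circ\tau=h_{\otimes}\circ h_2$ and the auxiliary cone $\BZ(\wedge)$ with $\tau^{\leq 3}\BZ(\wedge)\cong\wedge^2T^*[-2]$, identifies $h_{\cup}$ with the cup-product $T^*\otimes T^*\to\wedge^2T^*$ by reduction to the trivial torsor, and only then, after applying $\CH^2(-)$ and $H^*(X_{\bk},-)$ to the resulting maps of extensions, reads off $\partial_{\otimes}(a\otimes b)=(\partial(a)\otimes b,\,a\otimes\partial(b))$ and hence $h(a\wedge b)=a\otimes\partial(b)-b\otimes\partial(a)$, the sign coming from the twist $\tau_1\colon t\otimes x\mapsto -x\otimes t$. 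Nothing in your plan plays the role of this comparison (nor proposes any substitute, e.g.\ an explicit cocycle computation), so the formula in (1) remains an unproved assertion rather than a consequence of what you set up.
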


\begin{proof}
Sur $\bk$, on a $T^*\cong \BZ^N$, donc 
$$H^1(X_{\bk},\wedge^2T^*)=0\ \ \ \text{ et}\ \ \  H^i(X_{\bk},\BG_{m,X}\otimes T^*)\cong H^i(X_{\bk},\BG_{m,X})\otimes T^*$$
(le\,premier\,\'enonc\'e\,utilisant\,la\,lissit\'e\,de\,$X$).
D'apr\`es la Proposition\,\ref{cohzf2},\,$\BH^i(X_{\bk},\BZ_f(2))\cong H^{i-2}(X_{\bk},\CH^2(\BZ_f(2)))$ pour chaque $i\leq 3$.
En appliquant $H^i(X_{\bk},-)$ aux suites exactes (\ref{Zde2}) et (\ref{h12}), on d\'eduit 
les deux suites exactes longues.

La composition dans l'\'enonc\'e (2) est induite par $\BZ_X(1)\otimes^L \BZ^{\leq 2}_f(1)\ra \BZ_f(2)\ra \BZ_X(2)[1]$.
D'apr\`es les carr\'es (1), (2) dans le diagramme (\ref{h11def}) et (\ref{motivice2}), on a un diagramme commutatif:
$$\xymatrix{\Pic(X_{\bk})\otimes T^*\ar[r]^-{id\times \partial}\ar[d]&H^2(X_{\bk},\BZ_X(1))\otimes H^2(X_{\bk},\BZ_X(1))\ar[d]\ar@{}[rd]|{(3)}
&\Pic(X_{\bk})\otimes \Pic(X_{\bk})\ar[d]^{\cup}\ar[l]_-{\cong}\\
\BH^3(X_{\bk},\BZ_f(2))\ar[r]&H^4(X_{\bk},\BZ_X(2))&\CCH^2(X_{\bk}),\ar@{_{(}->}[l]^{\eqref{e1}}
}$$
o\`u $\cup$ est l'intersection.
 L'\'enonc\'e (2) en d\'ecoule.

\'Etablissons l'\'enonc\'e (1). 
On a un diagramme commutatif de triangles:
$$\xymatrix{\BZ_X(1)\otimes^L \BZ_X(1)\ar[r]\ar[d]&\BZ_X(1)\otimes^L R^{\leq 2}f_*\BZ_Y(1)\ar[r]\ar[d]&
\BZ_X(1)\otimes^L \BZ^{\leq 2}_f(1)\ar[r]^-{+1}\ar[d]&\\
R^{\leq 2}f_*\BZ_Y(1)\otimes^L \BZ^{\leq 2}_X(1)\ar[r]\ar[d]&R^{\leq 2}f_*\BZ_Y(1)\otimes^L R^{\leq 2}f_*\BZ_Y(1)\ar[r]\ar[d]&
R^{\leq 2}f_*\BZ_Y(1)\otimes^L \BZ^{\leq 2}_f(1)\ar[r]^-{+1}\ar[d]&\\
\BZ^{\leq 2}_f(1)\otimes^L \BZ_X(1)\ar[r]\ar[d]^{+1}&\BZ^{\leq 2}_f(1)\otimes^L R^{\leq 2}f_*\BZ_Y(1)\ar[r]\ar[d]^{+1}&
\BZ^{\leq 2}_f(1)\otimes^L \BZ^{\leq 2}_f(1)\ar[r]^-{+1}\ar[d]^{+1}&\\
&&&.
}$$
 Par la d\'efinition de $\BZ_f(\otimes )$ et la d\'emonstration de \cite[Prop. 1.1.11]{BBD} appliqu\'e au diagramme ci-dessus, 
 on a les triangles (\cite[pp. 25, Diagramme (1) et (2)]{BBD}):
$$\BZ_X(1)\otimes^L \BZ^{\leq 2}_f(1)\xrightarrow{h_1}\BZ_f(\otimes )\to \BZ^{\leq 2}_f(1)\otimes^L R^{\leq 2}f_*\BZ_Y(1)\xrightarrow{+1},$$
$$\BZ^{\leq 2}_f(1)\otimes^L \BZ_X(1)\xrightarrow{h_2}\BZ_f(\otimes )\to R^{\leq 2}f_*\BZ_Y(1)\otimes^L \BZ^{\leq 2}_f(1) \xrightarrow{+1}$$
et donc on a un diagramme commutatif de triangles (le lemme \ref{leme97} ci-dessous):
\begin{equation}\label{e98} \begin{gathered} \xymatrixcolsep{1pc} \xymatrix{
[\BZ_X(1)\otimes^L \BZ^{\leq 2}_f(1)]\oplus [\BZ^{\leq 2}_f(1)\otimes^L \BZ_X(1)]\ar[r]^-{h_1+h_2}\ar[d]^{pr_1}&\BZ_f(\otimes)\ar[r]\ar[d]& \BZ^{\leq 2}_f(1)\otimes^L \BZ^{\leq 2}_f(1)\ar[r]^-{+1}\ar[d]&\\
\BZ_X(1)\otimes^L \BZ^{\leq 2}_f(1)\ar[r]& R^{\leq 2}f_*\BZ_Y(1)\otimes^L \BZ^{\leq 2}_f(1)\ar[r] & \BZ^{\leq 2}_f(1)\otimes^L \BZ^{\leq 2}_f(1)\ar[r]^-{+1}&
.} \end{gathered}\end{equation}

Notons   $\BZ(\wedge)$ le c\^one de $\BZ_X(1)\otimes^L \BZ^{\leq 2}_f(1)\xrightarrow{h_{\otimes}\circ h_1}\BZ_f(2)$, 
o\`u $\BZ_f(\otimes)\xrightarrow{h_{\otimes}}\BZ_f(2)$ est le morphisme du diagramme (\ref{h11def}). 
Par les Propositions \ref{Zf1} et \ref{cohzf2}, on a $\tau^{\leq 3}\BZ(\wedge)\cong \wedge^2T^*[-2]$.

Notons    $\tau: \BZ^{\leq 2}_f(1)\otimes^L \BZ_X(1)\to \BZ_X(1)\otimes^L \BZ^{\leq 2}_f(1)$ l'isomorphisme canonique.
 Puisque le produit $\BZ(1)\otimes^L \BZ(1)\to \BZ(2)$ est sym\'etrique, on a $h_{\otimes }\circ h_1\circ \tau =h_{\otimes } \circ h_2$ et un diagramme commutatif de triangles :
\begin{equation}\label{e99}
\begin{gathered}
\xymatrix{[\BZ_X(1)\otimes^L \BZ^{\leq 2}_f(1)]\oplus [\BZ^{\leq 2}_f(1)\otimes^L \BZ_X(1)]\ar[r]^-{h_1+h_2}\ar[d]^{id+ \tau}& \BZ_f(\otimes)\ar[r]\ar[d]^{h_{\otimes}}&
 \BZ^{\leq 2}_f(1)\otimes^L \BZ^{\leq 2}_f(1)\ar[r]^-{+1}\ar@{-->}[d]^{h_{\cup}}&\\
\BZ_X(1)\otimes^L \BZ^{\leq 2}_f(1)\ar[r]^-{h_{\otimes}\circ h_1}&\BZ_f(2)\ar[r]&\BZ(\wedge)\ar[r]^{+1}&,
}\end{gathered}\end{equation}
et l'homomorphisme $h$ est induit par le deuxi\`eme triangle.
Puisque $[\BZ^{\leq 2}_f(1)\otimes^L \BZ^{\leq 2}_f(1)]\cong (T^*\otimes T^*)[-2]$, 
le morphisme $h_{\cup}$ induit un morphisme de faisceaux $\bk$-constants: $T^*\otimes T^* \xrightarrow{h'_{\cup}} \wedge^2T^*$.
On peut calculer $h'_{\cup}$ en se ramenant au cas  $Y\iso X\times T$, et dans ce cas, $h'_{\cup}$ est exactement  le cup-produit $a\otimes b\to a\wedge b$.
En g\'en\'eral, $h'_{\cup}$ est donc le cup-produit.
Appliquons $\CH^2(-)$ au diagrammes (\ref{e98}) et (\ref{e99}), on a un diagramme commutatif \`a lignes exactes
$$\xymatrix{0\ar[r]&\BG_{m,X}\otimes T^*\ar[r]&R^1f_*\BZ_Y(1) \otimes T^*   \ar[r]& T^*\otimes T^*\ar[r]&0\\
0\ar[r]& (\BG_{m,X}\otimes T^*)\oplus (T^*\otimes \BG_{m,X}) \ar[u]^{pr_1} \ar[r]\ar[d]^{id+ \tau_1}&\CH^2(\BZ_f(\otimes))\ar[r]\ar[d]\ar[u]&T^*\otimes T^*\ar[d]^{\mathrm{cup-produit}}\ar[r]\ar[u]^=&0\\
0\ar[r]&\BG_{m,X}\otimes T^*\ar[r]&\CH^2(\BZ_f(2))\ar[r]&  \wedge^2T^*\ar[r]&0
}$$
o\`u la premi\`ere ligne est (\ref{e97})$\otimes T^*$, la troisi\`eme ligne est la suite exacte (\ref{h12}) et $\tau_1$ l'homomorphisme induit par $\tau$ en degr\'e $1$, 
i.e. $$\tau_1: \ T^*\otimes \BG_{m,X}\to  \BG_{m,X}\otimes T^*: t\otimes x \mapsto -x\otimes t.$$

Appliquons $H^i(X_{\bk},-)$ au diagramme ci-dessus, on a un diagramme commutatif:
$$\xymatrixcolsep{5pc}\xymatrix{T^*\otimes T^*\ar[d]^{\partial \times id_{T^*}}&T^*\otimes T^*\ar[r]^{\mathrm{cup-produit}}\ar[l]_=\ar[d]^{\partial_{\otimes}}&\wedge^2 T^*\ar[d]^h\\
\Pic(X_{\bk})\otimes T^*& \Pic(X_{\bk})\otimes T^* \oplus T^*\otimes \Pic(X_{\bk})\ar[r]^-{id+\tau_1}\ar[l]_-{pr_1} &\Pic(X_{\bk})\otimes T^*.
}$$
 Donc $pr_1(\partial_{\otimes}(a\otimes b))=\partial(a)\otimes b$. Par le m\^eme argument, $pr_1(\partial_{\otimes}(a\otimes b))=a\otimes \partial(b)$. 
 Ainsi on a:
 $\partial_{\otimes}(a\otimes b)=(\partial(a)\otimes b,a\otimes \partial(b))$ et $h(a\wedge b)=a\otimes \partial (b)-b\otimes \partial (a)$.
 \qed
\end{proof}

\begin{lem}\label{leme97}
Soient $\CD$ une cat\'egorie triangul\'ee et $A_i\xrightarrow{h_i} B\xrightarrow{g_i} C_i\xrightarrow{+1}$ deux triangles pour $i=1,2$. Alors on a un diagramme commutatif de triangles:
$$\xymatrixcolsep{5pc}\xymatrix{A_1\oplus A_2\ar[r]^{h_1+h_2}\ar[d]_{pr_2}&B\ar[r]\ar[d]^{g_1}&D\ar[r]^{+1}\ar[d]^=&\\
A_2\ar[r]^{g_1\circ h_2}&C_1\ar[r]&D\ar[r]^{+1}&
}$$ 
o\`u $D$ est le c\^one de $(g_1\circ h_2)$.
\end{lem}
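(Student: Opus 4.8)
The plan is to obtain the entire diagram from a single application of the octahedral axiom in $\CD$, applied not to the composite $A_2\xrightarrow{h_2}B\xrightarrow{g_1}C_1$ but to the composite
\[
A_1\xrightarrow{\iota_1}A_1\oplus A_2\xrightarrow{h_1+h_2}B,
\]
whose total composite is $h_1$. The point is that the cones appearing in the octahedron for this pair are exactly $A_2$, $C_1$ and the cone of $h_1+h_2$, so that the octahedral triangle, read off together with its built-in commutativities, \emph{is} the asserted diagram.

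First I would complete $h_1$ and $h_2$ to the given distinguished triangles $A_i\xrightarrow{h_i}B\xrightarrow{g_i}C_i\to A_i[1]$. I would then record that the split triangle $A_1\xrightarrow{\iota_1}A_1\oplus A_2\xrightarrow{pr_2}A_2\xrightarrow{0}A_1[1]$ is distinguished (being the direct sum of $A_1\xrightarrow{=}A_1\to 0\to A_1[1]$ with $0\to A_2\xrightarrow{=}A_2\to 0$), so that $A_2$, with structural map $pr_2$, is a cone of $\iota_1$; and that $C_1$, with structural map $g_1$, is a cone of $(h_1+h_2)\circ\iota_1=h_1$.

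Next I would apply the octahedral axiom to the pair $(\iota_1,\,h_1+h_2)$, choosing the triangle on $\iota_1$ to be the split one, the triangle on the composite $h_1$ to be the given triangle for $i=1$, and the triangle on $h_1+h_2$ to be any completion, with cone $D'$ and structural map $c\colon B\to D'$. The axiom then produces a distinguished octahedral triangle $A_2\xrightarrow{\phi}C_1\xrightarrow{\psi}D'\to A_2[1]$ together with the two commutativities $\phi\circ pr_2=g_1\circ(h_1+h_2)$ (the right square of the morphism between the cones of $\iota_1$ and of $h_1$) and $c=\psi\circ g_1$ (the relation among the cone maps). These two relations are precisely the left and right squares of the diagram in the statement, the right vertical arrow being $\mathrm{id}_{D'}$, with top row the triangle on $h_1+h_2$ and bottom row the octahedral triangle.

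The only genuinely computational step is the identification $\phi=g_1\circ h_2$. Since $g_1\circ h_1=0$ (consecutive maps in the triangle for $i=1$), one has $g_1\circ(h_1+h_2)=(g_1\circ h_2)\circ pr_2$; comparing with $\phi\circ pr_2=g_1\circ(h_1+h_2)$ and cancelling the split epimorphism $pr_2$ (compose on the right with $\iota_2$, using $pr_2\circ\iota_2=\mathrm{id}_{A_2}$) gives $\phi=g_1\circ h_2$. Hence the octahedral triangle reads $A_2\xrightarrow{g_1\circ h_2}C_1\xrightarrow{\psi}D'\to A_2[1]$, so $D'$ is a cone $D$ of $g_1\circ h_2$; transporting along this identification yields exactly the asserted commutative diagram of triangles, with no five-lemma argument needed since the octahedral axiom delivers $D'\cong D$ and the two commutativities at once. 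I expect the main obstacle to be purely bookkeeping: matching the precise orientation and labelling conventions of the chosen formulation of the octahedral axiom with the two squares, and ensuring the split triangle is taken as the selected cone of $\iota_1$ so that the composite's cone is literally the given $C_1$.
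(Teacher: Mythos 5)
Your proof is correct and follows essentially the same route as the paper: the paper completes the commutative square built on the factorization $h_1=(h_1+h_2)\circ\iota_1$, with the split triangle $A_1\to A_1\oplus A_2\xrightarrow{pr_2}A_2$ as one column and the given triangle on $h_1$ (cone $C_1$) as the other, into a $3\times 3$ diagram via \cite[Exer. 10.6]{KS}, which is exactly the instance of the octahedral axiom you invoke. Your version is merely more self-contained, deriving the two commutativities and the identification $\phi=g_1\circ h_2$ (by cancelling the split epimorphism $pr_2$) directly from the octahedron instead of citing the $3\times 3$ lemma.
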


\begin{proof}
Puisque $\mathrm{Cone}(pr_2)\cong A_1[1]\cong \mathrm{Cone}(g_1)$,
d'apr\`es \cite[Exer. 10.6]{KS}, on a un diagramme commutatif de triangles:
$$\xymatrixcolsep{5pc}\xymatrix{A_1\ar[r]^=\ar[d]&A_1\ar[r]\ar[d]&0\ar[r]^{+1}\ar[d]&\\
A_1\oplus A_2\ar[r]^{h_1+h_2}\ar[d]_{pr_2}&B\ar[r]\ar[d]^{g_1}&\mathrm{Cone}(h_1+h_2)\ar[r]^{+1}\ar[d]&\\
A_2\ar[r]^{g_1\circ h_2}\ar[d]^{+1}&C_1\ar[r]\ar[d]^{+1}&D\ar[r]^{+1}\ar[d]^{+1}&\\
&&&.
}$$
Donc $\mathrm{Cone}(h_1+h_2)\cong \mathrm{Cone}(g_1\circ h_2)\cong D$, d'o\`u le r\'esultat.
\qed
\end{proof}

\section{Cohomologie motivique \`a coefficients $\BZ(2)$ des torseurs universels sur une surface g\'eom\'etriquement rationnelle}

Soit $k$ un corps de caract\'eristique 0.
 Soient $X$ une surface projective lisse g\'eom\'etriquement rationnelle sur $k$, et $g: \CT\ra X$ un torseur universel de $X$. 
 Le r\'esultat principal de cette section est le Th\'eor\`eme \ref{H4Z2}.

\begin{lem}\label{codim1}
Supposons que $k$ est alg\'ebriquement clos.
Soit $X$ une $k$-vari\'et\'e projective, lisse, rationnelle, connexe. 
Soit $\CT \ra X$ un  torseur universel sous le tore $T$. 
Soit $T^c$ une $T$-vari\'et\'e torique, projective, lisse.
Soit  $\CT^{c}= \CT\times^TT^c$.
Soit $Z$ une $T$-orbite de codimension $l$ dans $T^c$. Notons $\CZ:=Z\times^{T}\CT\sbt \CT^c$. Alors $\CZ$ est lisse, $k[\CZ]^{\times}=k^{\times}$, $\Pic(\CZ)\cong \BZ^l$, $\Br(\CZ)=0$, $H^1(\CZ,\BZ/n)=0$, $H^2_{\nr}(\CZ,\BQ/\BZ(1))=0$ et $H^2(\CZ, \BZ/n(1))\cong (\BZ/n)^l$.
\end{lem}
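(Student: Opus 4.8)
The plan is to realise $\CZ$ as the total space of a \emph{second} universal torsor, so that each assertion is inherited from the corresponding property of $\CT$. Set $N:=\dim T$. First I would use Th\'eor\`eme~\ref{torique} to describe $Z$: being a $T$-orbit of codimension $l$, it is the homogeneous space $T/T_Z$ for its pointwise stabiliser $T_Z$, a subtorus with $T_Z\cong\BG_m^l$, with quotient torus $T':=T/T_Z\cong\BG_m^{N-l}$. The contracted product then gives a canonical isomorphism $\CZ=Z\times^T\CT\cong\CT/T_Z$, so that $\CZ\to X$ is a torsor under $T'$, while the quotient map $\CT\to\CZ$ exhibits $\CT$ as a torsor under $T_Z$ over $\CZ$. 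As $T_Z$ acts freely on the smooth variety $\CT$, the quotient $\CZ$ is smooth and integral (equivalently, $\CZ\to X$ is Zariski-locally a product with $T'$).

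The key is now to compute $k[\CZ]^\times$ and $\Pic(\CZ)$ and to recognise $\CT\to\CZ$ as universal; for this I would apply the Colliot-Th\'el\`ene--Sansuc exact sequence (\cite[Prop. 1.4.2]{CTS}, whose degree-one part is (\ref{e97})) to the two torsors in turn. For $f:\CZ\to X$ it reads
$$0\to k[X]^\times/k^\times\to k[\CZ]^\times/k^\times\to (T')^*\xrightarrow{\lambda}\Pic(X)\to\Pic(\CZ)\to 0.$$
Here $k[X]^\times=k^\times$ since $X$ is projective and integral, and the type $\lambda$ is the composite $(T')^*\hookrightarrow T^*\xrightarrow{\sim}\Pic(X)$ of the inclusion dual to $T\twoheadrightarrow T'$ with the (bijective) type of the universal torsor $\CT$. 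Thus $\lambda$ is injective with cokernel $T^*/(T')^*\cong T_Z^*\cong\BZ^l$, whence $k[\CZ]^\times=k^\times$ and $\Pic(\CZ)\cong\BZ^l$. Feeding the $T_Z$-torsor $\CT\to\CZ$ into the same sequence and using $k[\CT]^\times=k^\times$ and $\Pic(\CT)=0$ (Th\'eor\`eme~\ref{torsoruniv}(i),(ii)), the connecting map $T_Z^*\to\Pic(\CZ)$ is forced to be an isomorphism: $\CT\to\CZ$ is again a universal torsor.

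Since $\CZ$ is smooth, integral, with $k[\CZ]^\times=k^\times$ and $\Pic(\CZ)\cong\BZ^l$ free of finite type, it satisfies the hypotheses of Th\'eor\`eme~\ref{torsoruniv}; part (iv) applied to $\CT\to\CZ$ gives $\Br(\CZ)\cong\Br(\CT)$ (no projectivity of the base being needed), and $\Br(\CT)=0$ by Corollaire~\ref{H0K2UD}, the ambient $X$ being projective and rational. Hence $\Br(\CZ)=0$. The $\BZ/n$-statements are then read off the Kummer sequence on $\CZ$, exactly as in Corollaire~\ref{H0K2UD}: as $k=\bk$ makes $k^\times$ divisible and $\Pic(\CZ)$ is torsion-free, $H^1(\CZ,\BZ/n(1))=0$, i.e. $H^1(\CZ,\BZ/n)=0$; and $0\to\Pic(\CZ)/n\to H^2(\CZ,\BZ/n(1))\to\Br(\CZ)[n]\to 0$ gives $H^2(\CZ,\BZ/n(1))\cong(\BZ/n)^l$. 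Finally, for any smooth variety over a field one has the identification $H^2_{\nr}(\CZ,\BQ/\BZ(1))\cong\Br(\CZ)$ (\cite{CT95}; the Gersten complex of the Zariski sheaf $\CH^1(\CZ,\BQ/\BZ(1))$ has length two, so the coniveau spectral sequence degenerates in this range and no properness of $\CZ$ is required), whence $H^2_{\nr}(\CZ,\BQ/\BZ(1))=0$.

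The decisive, non-formal step is the structural claim that $\CT\to\CZ$ is itself a universal torsor: once $\Pic(\CZ)$ is computed and matched with its type, all the vanishing cascades down from the known properties of the universal torsor $\CT$, and the cohomological computations become routine.
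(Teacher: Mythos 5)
Your proof is correct, and its first half coincides with the paper's own argument: both use le th\'eor\`eme \ref{torique} to identify $Z\cong T/\BG_m^l$ with $\BG_m^l\sbt T$ a subtorus, so that $\CZ\to X$ is a torsor under the quotient torus (hence $\CZ$ is smooth), and both then compute $k[\CZ]^\times=k^\times$ and $\Pic(\CZ)\cong\BZ^l$ from the Colliot-Th\'el\`ene--Sansuc exact sequence, the type of $\CZ\to X$ being the composite of $(T/\BG_m^l)^*\hookrightarrow T^*$ with the bijective type of $\CT$ (the paper packages this as a commutative diagram of the two CTS sequences). The genuine divergence is the Brauer step. The paper applies \cite[Thm. 1.6]{HS} \emph{downward}, to the torsor $\CZ\to X$ whose type is injective but not bijective, and concludes $\Br(\CZ)=0$ from $\Br(X)=0$ ($X$ rational); note that this invokes a more general form of the Harari--Skorobogatov theorem than the one quoted in the paper as th\'eor\`eme \ref{torsoruniv} (iv), which is stated only for universal torsors. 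You instead run the CTS sequence a second time, for $\CT\to\CZ$, and observe that $k[\CT]^\times=k^\times$ and $\Pic(\CT)=0$ force its type $(\BG_m^l)^*\to\Pic(\CZ)$ to be an isomorphism, i.e.\ $\CT\to\CZ$ is itself a universal torsor; then th\'eor\`eme \ref{torsoruniv} (iv), applied \emph{upward} with base $\CZ$ (whose hypotheses you have verified, projectivity of the base not being required), gives $\Br(\CZ)\cong\Br(\CT)=0$ by le corollaire \ref{H0K2UD}. Your route is slightly longer but has the merit of using only the statements in the form already quoted in the paper, and the intermediate fact that $\CT\to\CZ$ is universal is of independent interest; the paper's route is shorter but leans on the stronger injective-type version of \cite[Thm. 1.6]{HS}. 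The concluding steps (Kummer sequence for the $\BZ/n$-statements, and $H^2_{\nr}(\CZ,\BQ/\BZ(1))\sbt\Br(\CZ)$ for the smooth variety $\CZ$) agree with the paper.
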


\begin{proof}
Notons $l':=\dim (T)-l$.
Puisque $Z$ est une $T$-orbite de codimension $l$, par  le Th\'eor\`eme \ref{torique}, 
il y a une sous-vari\'et\'e torique affine $U\sbt T^c$ telle que $U\cong \BA^{l}\times \BG_m^{l'}$ comme vari\'et\'e torique et $Z\sbt U$.
Alors il y a un homomorphisme $\BG^l_m\ra T$
 tel que $Z\iso T/\BG^l_m$. 
 Notons $\phi: T\to  T/\BG^l_m \cong Z$.
 Alors $\CZ:=(T/\BG^l_m)\times^{T}\CT$.
 Par la construction g\'eom\'etrique de $H^1(X, T)\xrightarrow{\phi_*} H^1(X, T/\BG^l_m)$, 
 le morphisme $\CZ\to X$ est un torseur sous $T/\BG^l_m$ et  $[\CZ]=\phi_*([\CT])$.
  Ainsi $\CZ$ est lisse. Par \cite[\S 2.1]{CTS} (ou 
 \cite[Prop. 6.10]{S}),
 on a un diagramme commutatif \`a lignes exactes :
$$\xymatrix{0\ar[r]&k[X]^{\times}/k^{\times}\ar[r]\ar[d]^=&k[\CZ]^{\times}/k^{\times}\ar[r]\ar[d]&(T/\BG^l_m)^*\ar@{^{(}->}[d]\ar[r]&\Pic(X)\ar[r]\ar[d]^=&
\Pic(\CZ)\ar[r]\ar[d]&0\\
0\ar[r]&k[X]^{\times}/k^{\times}\ar[r]&k[\CT]^{\times}/k^{\times}\ar[r]&T^*\ar[r]^{\cong}&\Pic(X)\ar[r]&\Pic(\CT)\ar[r]&0.
}$$
Puisque $\Pic(\CT)=0$ (Th\'eor\`eme \ref{torsoruniv}), on a $k[\CZ]^{\times}=k^{\times}$ et $\Pic(\CZ)\cong \BZ^l$. Par \cite[Th\'eor\`eme 1.6]{HS} 
et le fait que $X$ est rationnelle, on a $\Br(\CZ)=0$. 
Ainsi $H^2_{\nr}(\CZ,\BQ/\BZ(1))=0$.
Le reste du r\'esultat se d\'eduit de la suite de Kummer.
\qed
\end{proof}

\begin{lem}\label{lem99}
Supposons que $k$ est alg\'ebriquement clos.
Soient $X$ une $k$-vari\'et\'e projective, lisse, rationnelle et $\CT \ra X$ un  torseur universel.
 Le morphisme naturel $H^1(X,\CK_2)\to H^1(\CT ,\CK_2)$ est nul.
\end{lem}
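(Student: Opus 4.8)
The plan is to bypass the $\BZ_g(2)$-machinery of Section~3 and argue directly, combining the cup-product description of $H^1(-,\CK_2)$ with the vanishing $\Pic(\CT)=0$ furnished by Th\'eor\`eme~\ref{torsoruniv}(ii). The first step is to recognise that the homomorphism (\ref{H1K2controle}) is, for any smooth variety, induced by the cup-product pairing $\CK_1\otimes\CK_1\to\CK_2$ under the identifications $\Pic(X)=H^1(X,\CK_1)$ and $k^\times=H^0(X,\CK_1)$, the latter because $X$ is proper and integral over the algebraically closed field $k$. Indeed, a divisor class cupped with a constant $u\in k^\times$ is sent to the family of symbols $\{u\}\in\oplus_{x\in X^{(1)}}k(x)^\times$ sitting in the middle term of the Gersten complex for $\CK_2$, which is precisely the map $\phi_0$ entering the definition of (\ref{H1K2controle}).

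By functoriality of the cup product under the flat pullback $g\colon\CT\to X$ (note $g^*u=u$ for a constant $u$), I then obtain a commutative square
$$\xymatrix{\Pic(X)\otimes k^\times\ar[r]^-{\cup}\ar[d]_{g^*\otimes\,\mathrm{id}}&H^1(X,\CK_2)\ar[d]^{g^*}\\ \Pic(\CT)\otimes k^\times\ar[r]^-{\cup}&H^1(\CT,\CK_2).}$$
By Th\'eor\`eme~\ref{torsoruniv}(ii) one has $\Pic(\CT)=\Pic(\CT_{\bk})=0$ (as $k=\bk$), so the lower-left corner vanishes; by commutativity the composite $g^*\circ\cup$ around the square factors through it and is therefore zero. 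On the other hand, since $X$ is projective, smooth, rational and connected, Proposition~\ref{H1K2control} (the theorem of Pirutka) shows the top arrow $\cup$ is surjective. A diagram chase then forces $g^*=0$ on all of $H^1(X,\CK_2)$, which is the assertion.

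The step I expect to be the main obstacle is the first one: carefully identifying (\ref{H1K2controle}) with the cup-product pairing and checking its naturality under $g^*$, so that the square genuinely commutes. A more laborious alternative would remain inside Section~3. There the map in question is the arrow $H^1(X_{\bk},\CK_2)\to H^1(\CT_{\bk},\CK_2)$ of the second long exact sequence of Th\'eor\`eme~\ref{mainz2}, so it would suffice to prove that $\BH^2(X_{\bk},\BZ_g(2))\to H^1(X_{\bk},\CK_2)$ is surjective. Since the type $\partial\colon T^*\to\Pic(X_{\bk})$ of a universal torsor is an isomorphism, the map $h$ of the first sequence becomes the standard injection $\wedge^2T^*\hookrightarrow T^*\otimes T^*$, giving $\BH^2(X_{\bk},\BZ_g(2))\cong k^\times\otimes T^*$; but one would still have to identify the boundary map $\BH^2(X_{\bk},\BZ_g(2))\to H^1(X_{\bk},\CK_2)$ explicitly to conclude, which is exactly the work the cup-product argument avoids.
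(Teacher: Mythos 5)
Your proof is correct and is essentially the paper's own argument: the paper establishes the lemma with exactly the commutative square you draw, combining the isomorphism of Proposition \ref{H1K2control} (Pirutka's theorem) with $\Pic(\CT)=0$ from Th\'eor\`eme \ref{torsoruniv}(ii). The only difference is cosmetic — the paper takes the naturality of the map (\ref{H1K2controle}) under $g^*$ for granted, whereas you justify that commutativity explicitly via the cup-product interpretation of (\ref{H1K2controle}).
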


\begin{proof}
On a un diagramme commutatif:
$$\xymatrix{\Pic (X)\otimes k^{\times}\ar[r]^{\theta}\ar[d]&H^1(X,\CK_2)\ar[d]\\
0=\Pic (\CT )\otimes k^{\times}\ar[r]& H^1(\CT, \CK_2).
}$$
D'apr\`es \cite[Prop. 2.6]{Pi}, le morphisme $\theta$ est un isomorphisme.
On a $\Pic (\CT )=0$ par le Th\'eor\`eme \ref{torsoruniv}.
Donc $H^1(X,\CK_2)\to H^1(\CT ,\CK_2)$ est nul. 
\qed
\end{proof}

\begin{thm}\label{S2T}
Soit $X$ une $k$-vari\'et\'e projective, lisse, g\'eom\'e\-triquement int\`egre et g\'eom\'e\-triquement rationnelle.
Soit $g: \CT\to X$ un torseur universel de $X$. 
Alors on a $H^3_{\nr}(\CT_{\bk},\BQ/\BZ(2))=0$ et une suite exacte naturelle de $\Gamma_k$-modules:
\begin{equation}\label{S2T-e}
 0 \to H^1(\CT_{\bk},\CK_2)\to \Sym^2\Pic(X_{\bk})\xrightarrow{\cup}\CCH^2(X_{\bk})\to \CCH^2(\CT_{\bk})\to 0,
 \end{equation}
o\`u  $\cup$ est induit par le produit d'intersection 
$\Pic(X_{\bk})\times \Pic(X_{\bk})\to \CCH^2(X_{\bk}).$
\end{thm}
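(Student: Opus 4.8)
The two assertions are essentially independent, so I would prove them separately; the real work is already carried out in Theorem \ref{mainz2}, and what remains is to specialize it to a universal torsor and to treat the vanishing of $H^3_{\nr}(\CT_{\bk},\BQ/\BZ(2))$ by a residue argument. For the latter, recall that $H^3_{\nr}(\CT_{\bk},\BQ/\BZ(2))=H^0_{Zari}(\CT_{\bk},\CH^3)$ is an intrinsic invariant of the smooth variety $\CT_{\bk}$, so I may compute it on any convenient model. I would fix a smooth projective toric $T$-variety $T^c$ (which exists by \cite[Cor. 1]{CTHS}) and set $\CT^c:=\CT\times^T T^c$. By Proposition \ref{exam1}(3), $\CT^c_{\bk}$ is cellular, hence smooth, projective and $\bk$-rational, so $H^3_{\nr}(\CT^c_{\bk},\BQ/\BZ(2))=H^3(\bk,\BQ/\BZ(2))=0$. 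The codimension-$1$ components of $\CT^c_{\bk}\setminus\CT_{\bk}$ are exactly the strata $\CZ_i=Z_i\times^T\CT_{\bk}$ attached to the codimension-$1$ $T$-orbits $Z_i$ of $T^c$. Given $\alpha\in H^3_{\nr}(\CT_{\bk},\BQ/\BZ(2))$, the Gersten complex on $\CT^c_{\bk}$ shows that each residue $\partial_{\CZ_i}(\alpha)$ lies in $H^2_{\nr}(\CZ_i,\BQ/\BZ(1))$: its own residues at the codimension-$1$ points of the smooth stratum $\CZ_i$ are the codimension-$2$ residues of $\alpha$ there, and these vanish because $\alpha$ is unramified in the interior and $Z_i$ meets no other boundary divisor. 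By Lemma \ref{codim1}, $H^2_{\nr}(\CZ_i,\BQ/\BZ(1))=0$, so all boundary residues of $\alpha$ vanish; hence $\alpha$ extends to an unramified class on $\CT^c_{\bk}$ and $\alpha\in H^3_{\nr}(\CT^c_{\bk},\BQ/\BZ(2))=0$.

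For the exact sequence I would apply Theorem \ref{mainz2} to the $T$-torsor $g\colon\CT\to X$, where $T^*\cong\Pic(X_{\bk})$, so that the boundary map $\partial\colon T^*\to\Pic(X_{\bk})$ is an isomorphism. Put $P:=\Pic(X_{\bk})$. Since $X$ is projective and geometrically integral, $\bk[X]^\times=\bk^\times$, and under $\partial$ the map $h\colon\wedge^2T^*\to P\otimes T^*$, $a\wedge b\mapsto a\otimes\partial(b)-b\otimes\partial(a)$, becomes the canonical injection $\wedge^2P\hookrightarrow P\otimes P$. Hence $h$ is injective with cokernel $\Sym^2P$, and the first long exact sequence of Theorem \ref{mainz2} gives $\BH^3(X_{\bk},\BZ_f(2))\cong\Sym^2P$. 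Moreover, by part (2) of Theorem \ref{mainz2}, the resulting map $\Sym^2P\cong\BH^3(X_{\bk},\BZ_f(2))\to\BH^4(X_{\bk},\BZ(2))$ is the composite of the intersection product $\cup\colon\Sym^2P\to\CCH^2(X_{\bk})$ (which is symmetric, hence factors through $\Sym^2P$) with the cycle class $\cl_{X}$.

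Now I would insert these identifications into the second long exact sequence of Theorem \ref{mainz2}, namely $H^1(X_{\bk},\CK_2)\to H^1(\CT_{\bk},\CK_2)\to\BH^3(X_{\bk},\BZ_f(2))\to\BH^4(X_{\bk},\BZ(2))\xrightarrow{g^*}\BH^4(\CT_{\bk},\BZ(2))$. The first arrow vanishes by Lemma \ref{lem99}, so $H^1(\CT_{\bk},\CK_2)\hookrightarrow\Sym^2P$. Because $X_{\bk}$ is rational, $H^3_{\nr}(X_{\bk},\BQ/\BZ(2))=0$, so by \eqref{e1} the class map $\cl_{X}\colon\CCH^2(X_{\bk})\to\BH^4(X_{\bk},\BZ(2))$ is an isomorphism, while $\cl_{\CT}$ is injective; naturality gives $\cl_{\CT}\circ g^*=g^*\circ\cl_{X}$. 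Using that $\cl_{X}$ is bijective and $\cl_{\CT}$ injective, the five-term sequence becomes the exact sequence $0\to H^1(\CT_{\bk},\CK_2)\to\Sym^2P\xrightarrow{\cup}\CCH^2(X_{\bk})\xrightarrow{g^*}\CCH^2(\CT_{\bk})$. Finally, since $\CT\to X$ is a torsor under $T\cong\BG_m^r$, it factors as an iterated $\BG_m$-torsor, each stage being the complement $L^{\circ}$ of the zero-section in a line bundle $L$ over a base $B$; the localization sequence $\CCH^1(B)\to\CCH^2(L)\cong\CCH^2(B)\to\CCH^2(L^{\circ})\to0$ shows that pullback is surjective at each stage, whence $g^*\colon\CCH^2(X_{\bk})\to\CCH^2(\CT_{\bk})$ is surjective and \eqref{S2T-e} is complete.

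The conceptual content is already absorbed into Theorem \ref{mainz2} (the computation of the cohomology sheaves of $\BZ_f(2)$ and the identification of its boundary map with the intersection pairing), so the remaining difficulties are two bookkeeping points. The first is checking that $h$ really is the canonical inclusion $\wedge^2P\hookrightarrow P\otimes P$ once $T^*$ is identified with $P$ via $\partial$, since it is this that forces $\BH^3(X_{\bk},\BZ_f(2))\cong\Sym^2P$ and pins down the map to $\CCH^2(X_{\bk})$. The second, and genuinely delicate, point is the residue argument: one must verify that each $\partial_{\CZ_i}(\alpha)$ lands in the unramified group $H^2_{\nr}(\CZ_i,\BQ/\BZ(1))$ and not merely in $H^2(\bk(\CZ_i),\BQ/\BZ(1))$, for it is precisely this refinement that lets Lemma \ref{codim1} kill it.
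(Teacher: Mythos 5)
Your proof is correct in substance and follows essentially the same route as the paper's: the same compactification $\CT^c=\CT\times^{T}T^c$ and Gersten residue argument through Lemme \ref{codim1} for the vanishing of $H^3_{\nr}(\CT_{\bk},\BQ/\BZ(2))$, and the same specialization of the Th\'eor\`eme \ref{mainz2} (with $\partial$ equal to the type, hence an isomorphism), combined with the Lemme \ref{lem99}, le Th\'eor\`eme \ref{motivic} and \eqref{e1}. Two local points deserve comment. First, a slip in justification: you deduce $H^3_{\nr}(\CT^c_{\bk},\BQ/\BZ(2))=0$ from cellularity of $\CT^c_{\bk}$ via la Proposition \ref{exam1}(3), but that proposition requires the base $X_{\bk}$ to be cellular, and le Th\'eor\`eme \ref{S2T} is stated for a geometrically rational $X$ of \emph{arbitrary} dimension, where rational does not imply cellular (la Proposition \ref{exam1}(1) only covers surfaces). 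The correct and immediate route, which is the paper's, is that $\CT_{\bk}$ is rational (Corollaire \ref{H0K2UD}), hence so is $\CT^c_{\bk}$, and the vanishing follows from birational invariance of $H^3_{\nr}$; so the conclusion you need survives, but not by the citation you give. Second, a genuinely different (and valid) sub-argument: for the surjectivity of $g^*:\CCH^2(X_{\bk})\to\CCH^2(\CT_{\bk})$ the paper invokes Rost \cite[Cor. 8.2]{Ro}, using that each fiber $\CT_{\bk,x}\cong\BG_{m,k(x)}^{N}$ has trivial higher Chow groups, whereas you factor $\CT_{\bk}\to X_{\bk}$ as an iterated $\BG_m$-torsor and use homotopy invariance plus the localization sequence $\CCH^1(B)\to\CCH^2(L)\to\CCH^2(L^{\circ})\to 0$ at each stage; this is more elementary and self-contained, at the cost of using the splitness of $T_{\bk}$ explicitly (which is harmless here). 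The remaining identifications — $h$ becoming the canonical inclusion $\wedge^2\Pic(X_{\bk})\hookrightarrow\Pic(X_{\bk})\otimes\Pic(X_{\bk})$, hence $\BH^3(X_{\bk},\BZ_g(2))\cong\Sym^2\Pic(X_{\bk})$, and the transfer of the five-term sequence to Chow groups via naturality of the cycle class map, where your use of mere injectivity of $\cl_{\CT}$ instead of bijectivity suffices — match the paper, and your residue refinement (that $\partial_{\CZ_i}(\alpha)$ lands in $H^2_{\nr}(\CZ_i,\BQ/\BZ(1))$ because a codimension-one orbit meets no other boundary component) is exactly the point implicit in the paper's inclusion $\ker(\psi)\sbt\oplus_{Z\in S_1}H^2_{\nr}(\CZ,\BQ/\BZ(1))$.
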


\begin{proof}
Soient $T^c$ une $T$-vari\'et\'e torique, projective, lisse et $\CT^{c}:= \CT\times^TT^c$.
Puisque $X_{\bk}$ est rationnelle, $H^3_{\nr}(X_{\bk},\BQ/\BZ(2))=0$  et $\CT_{\bk}$ est rationnelle.
Ainsi $H^3_{\nr}(\CT^c_{\bk},\BQ/\BZ)=0$. 
 
Par la r\'esolution de  Gersten, on a une suite exacte:
$$0\to H^3_{\nr}(\CT^c_{\bk},\BQ/\BZ(2))\to H^3_{\nr}(\CT_{\bk},\BQ/\BZ(2))\to \ker(\psi)$$
o\`u 
$$\psi: \oplus_{x\in (\CT^c_{\bk}\setminus \CT_{\bk})^{(0)}}H^2(\bk(x),\BQ/\BZ(1))\to \oplus_{x\in (\CT^c_{\bk}\setminus \CT_{\bk})^{(1)}}H^1(\bk(x),\BQ/\BZ).$$
Soit $S_1$ l'ensemble des $T_{\bk}$-orbites de codimension $1$ de $T^c_{\bk}$.
Pour $Z\in S_1$, notons $\CZ:=Z\times^{T}\CT\sbt \CT^c$.
Alors $S_1\cong (\CT^c_{\bk}\setminus \CT_{\bk})^{(0)}$ et
$$\ker(\psi)\sbt \oplus_{Z\in S_1} H^2_{\nr}(\CZ,\BQ/\BZ(1))\sbt \oplus_{x\in (\CT^c_{\bk}\setminus \CT_{\bk})^{(0)}}H^2(\bk(x),\BQ/\BZ(1)).$$ 
D'apr\`es le Lemme \ref{codim1}, $\ker(\psi)=0$, et donc $H^3_{\nr}(\CT_{\bk},\BQ/\BZ(2))=0$.

 D'apr\`es le Th\'eor\`eme \ref{mainz2}, 
 $$\BH^3(X_{\bk},\BZ_g(2))\cong T^*\otimes T^*/\wedge^2T^* \cong \Sym^2T^* \cong \Sym^2\Pic(X_{\bk}).$$

 Par le Th\'eor\`eme \ref{motivic}, $\CCH^2(X_{\bk})\cong \BH^4(X_{\bk},\BZ(2))$ et $\CCH^2(\CT_{\bk})\cong \BH^4(\CT_{\bk},\BZ(2))$.
Pour tout $x\in X_{\bk}$,  la fibre $\CT_{\bk,x}\cong \BG^{dim(T)}_{m,k(x)}$ et donc $\CCH^i(\CT_{\bk,x})=0$ pour tout $i\geq 1$.
D'apr\`es \cite[Cor. 8.2]{Ro}, le morphisme de restriction $\CCH^2(X_{\bk})\to \CCH^2(\CT_{\bk})$ est surjectif.
 On conclut alors avec le Lemme \ref{lem99} et le Th\'eor\`eme \ref{mainz2}.
 \qed
\end{proof}

\begin{cor}\label{S2Tcor}
Sous les hypoth\`eses du Th\'eor\`eme \ref{S2T}, soient $U\sbt X$ un ouvert et $\CT_U:=\CT\times_XU$.
Supposons que $\codim(X\setminus U,X)\geq 2$ et $H^1(X_{\bk},\CK_2)\cong H^1(U_{\bk},\CK_2)$.
Alors on a $H^3_{\nr}(\CT_{U, \bk},\BQ/\BZ(2))=0$ et une suite exacte naturelle de $\Gamma_k$-modules:
$$ 0 \to H^1(\CT_{U,\bk},\CK_2)\to \Sym^2\Pic(U_{\bk})\to \CCH^2(U_{\bk})\to \CCH^2(\CT_{U,\bk})\to 0.$$
\end{cor}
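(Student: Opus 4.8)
The morphism $\CT_U=\CT\times_XU\to U$ is again a torsor under $T$. The plan is to mimic the proof of Theorem \ref{S2T}, applying Theorem \ref{mainz2} to this torsor; the only places where projectivity of $X$ intervened will be recovered either from the codimension hypothesis or from the assumption $H^1(X_{\bk},\CK_2)\cong H^1(U_{\bk},\CK_2)$.

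First I record the consequences of $\codim(X\setminus U,X)\geq 2$. Since $X$ is smooth, restriction gives an isomorphism $\Pic(X_{\bk})\iso\Pic(U_{\bk})$, and by normality $\bk[U]^{\times}=\bk[X]^{\times}=\bk^{\times}$; in particular the type $T^*\to\Pic(U_{\bk})$ of $\CT_U\to U$ is again an isomorphism, so $\CT_U\to U$ is a universal torsor and $\Sym^2\Pic(U_{\bk})\cong\Sym^2\Pic(X_{\bk})$. Because $g$ is smooth, $\codim(\CT\setminus\CT_U,\CT)=\codim(X\setminus U,X)\geq 2$ as well. As the unramified cohomology of a smooth variety depends only on its function field and its set of codimension-$1$ points, both of which are unchanged on removing a closed subset of codimension $\geq 2$, I obtain
$$H^3_{\nr}(U_{\bk},\BQ/\BZ(2))=H^3_{\nr}(X_{\bk},\BQ/\BZ(2))=0$$
(the latter by rationality of $X_{\bk}$) and
$$H^3_{\nr}(\CT_{U,\bk},\BQ/\BZ(2))=H^3_{\nr}(\CT_{\bk},\BQ/\BZ(2))=0,$$
the last equality being Theorem \ref{S2T}. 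This establishes the first assertion.

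Now I apply Theorem \ref{mainz2} to $g_U:\CT_U\to U$. Since the type $\partial\colon T^*\to\Pic(U_{\bk})$ is an isomorphism, the first long exact sequence identifies $\BH^3(U_{\bk},\BZ_{g_U}(2))\cong\Sym^2\Pic(U_{\bk})$, exactly as in the proof of Theorem \ref{S2T}. By Theorem \ref{motivic} together with the two vanishing statements above, $\BH^4(U_{\bk},\BZ(2))\cong\CCH^2(U_{\bk})$ and $\BH^4(\CT_{U,\bk},\BZ(2))\cong\CCH^2(\CT_{U,\bk})$. The fibres of $g_U$ are tori, so $\CCH^i$ of a fibre vanishes for $i\geq 1$, whence by \cite[Cor. 8.2]{Ro} the restriction $\CCH^2(U_{\bk})\to\CCH^2(\CT_{U,\bk})$ is surjective. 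It remains to verify the analogue of Lemma \ref{lem99}, namely that $H^1(U_{\bk},\CK_2)\to H^1(\CT_{U,\bk},\CK_2)$ vanishes; this is the one step requiring the extra hypothesis. Consider the commutative square of restriction maps
$$\xymatrix{H^1(X_{\bk},\CK_2)\ar[r]\ar[d]_{\cong}&H^1(\CT_{\bk},\CK_2)\ar[d]\\ H^1(U_{\bk},\CK_2)\ar[r]&H^1(\CT_{U,\bk},\CK_2).}$$
The top arrow is zero by Lemma \ref{lem99} and the left arrow is an isomorphism by hypothesis, so a diagram chase forces the bottom arrow to vanish. Feeding these facts into the second long exact sequence of Theorem \ref{mainz2} then yields the desired four-term exact sequence, precisely as in Theorem \ref{S2T}.

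The only genuine obstacle is the vanishing of $H^1(U_{\bk},\CK_2)\to H^1(\CT_{U,\bk},\CK_2)$: as $U$ is not projective, Pirutka's isomorphism $\Pic(U_{\bk})\otimes\bk^{\times}\cong H^1(U_{\bk},\CK_2)$ invoked in Lemma \ref{lem99} is unavailable, and $U$ need not be cellular either. This is exactly why the hypothesis $H^1(X_{\bk},\CK_2)\cong H^1(U_{\bk},\CK_2)$ is imposed: it allows one to transport the vanishing from the projective surface $X$ to $U$.
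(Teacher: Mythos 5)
Your proof is correct and follows essentially the same route as the paper: codimension-$2$ invariance of unramified cohomology, Theorem \ref{motivic} to identify $\BH^4$ with $\CCH^2$, Theorem \ref{mainz2} applied to the universal torsor $\CT_U\to U$, and the hypothesis $H^1(X_{\bk},\CK_2)\cong H^1(U_{\bk},\CK_2)$ combined with Lemma \ref{lem99} to kill the map $H^1(U_{\bk},\CK_2)\to H^1(\CT_{U,\bk},\CK_2)$. The only cosmetic difference is that you re-derive the surjectivity of $\CCH^2(U_{\bk})\to\CCH^2(\CT_{U,\bk})$ directly from \cite[Cor. 8.2]{Ro}, whereas the paper gets it by a chase in the commutative diagram comparing the sequences for $X$ and $U$; both are fine.
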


\begin{proof}
Puisque la cohomologie non ramifi\'ee ne d\'epend pas des sous-ensembles de codimension $\geq 2$, 
on a $H^3_{\nr}(\CT_{U, \bk},\BQ/\BZ(2))=0$ et $H^3_{\nr}(U_{\bk},\BQ/\BZ(2))=0$. 
Par le Th\'eor\`eme \ref{motivic}, $\CCH^2(U_{\bk})\cong \BH^4(U_{\bk},\BZ(2))$ 
et $\CCH^2(\CT_{U,\bk})\cong \BH^4(\CT_{U, \bk},\BZ(2))$.
D'apr\`es le Th\'eor\`eme \ref{mainz2}, on a un diagramme commutatif de suites exactes:
$$\xymatrix{H^1(X_{\bk},\CK_2)\ar[r]^0\ar[d]^{\cong}&H^1(\CT_{\bk},\CK_2)\ar[r]\ar[d]&\Sym^2\Pic(X_{\bk})\ar[r]\ar[d]^{\cong}&
\CCH^2(X_{\bk})\ar[r]\ar@{->>}[d]&\CCH^2(\CT_{\bk})\ar[r]\ar@{->>}[d]&0\\
H^1(U_{\bk},\CK_2)\ar[r]&H^1(\CT_{U,\bk},\CK_2)\ar[r]&\Sym^2\Pic(U_{\bk})\ar[r]&\CCH^2(U_{\bk})\ar[r]&\CCH^2(\CT_{U,\bk})&.
}$$
Le r\'esultat en d\'ecoule.
\qed
\end{proof}

\begin{prop}\label{coh}
Supposons que $k$ est alg\'ebriquement clos. Soient $X$ une $k$-surface projective lisse rationnelle et $\CT$ un  torseur universel sur $X$. 
Alors $\CCH^2(\CT)=0$.
\end{prop}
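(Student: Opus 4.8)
The plan is to deduce the vanishing directly from the exact sequence established in Th\'eor\`eme~\ref{S2T}. Since $k$ is algebraically closed, that theorem (applied with $\bk=k$) yields the exact sequence
$$0 \to H^1(\CT,\CK_2)\to \Sym^2\Pic(X)\xrightarrow{\cup}\CCH^2(X)\to \CCH^2(\CT)\to 0,$$
in which $\cup$ is induced by the intersection product of divisors. Consequently $\CCH^2(\CT)$ is precisely the cokernel of the intersection map $\Sym^2\Pic(X)\to\CCH^2(X)$, and it suffices to prove that this map is surjective.

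To analyse the target, I would first recall that for a smooth projective rational surface over an algebraically closed field one has $\CCH^2(X)=\CCH_0(X)$, and that the degree map furnishes an isomorphism $\CCH_0(X)\cong\BZ$: indeed the group $A_0(X)$ of $0$-cycles of degree zero vanishes because $X$ is rational. Under this identification the map $\Sym^2\Pic(X)\to\CCH^2(X)\cong\BZ$ sends a product of classes to their intersection number, i.e.\ it is the symmetric bilinear intersection form of the surface.

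It then remains to check that the intersection form takes the value $1$ (or $-1$) somewhere on $\Pic(X)$, for then its image is all of $\BZ$ and the cokernel vanishes. Here I would use that, $X$ being rational, the cycle class map identifies $\Pic(X)=\NS(X)$ with $H^2(X,\BZ)$, on which the intersection form is unimodular by Poincar\'e duality; unimodularity already forces some pair of classes to pair to $1$. Alternatively, one can argue concretely on a minimal model: on $\BP^2$ the line class $H$ satisfies $H\cdot H=1$, while an exceptional curve $E$ of a blow-up satisfies $E\cdot E=-1$, so in every case the image of $\Sym^2\Pic(X)$ contains a generator of $\BZ$.

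The argument is short, and the only point requiring genuine care is the identification of the map $\cup$ in Th\'eor\`eme~\ref{S2T} with the honest intersection pairing, together with the computation $\CCH^2(X)\cong\BZ$; surjectivity is then immediate from unimodularity. I do not expect any serious obstacle beyond assembling these standard facts about the Chow groups and intersection theory of rational surfaces.
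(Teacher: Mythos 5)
Your argument is correct, and like the paper's proof it rests entirely on Th\'eor\`eme~\ref{S2T}; but the two proofs finish in genuinely different ways. You use the full exactness of the sequence in Th\'eor\`eme~\ref{S2T} --- so that $\CCH^2(\CT)$ is the cokernel of $\cup\colon\Sym^2\Pic(X)\to\CCH^2(X)$ --- and then prove surjectivity of $\cup$ by computing $\CCH^2(X)\cong\BZ$ (via $A_0(X)=0$ and the degree map) and invoking unimodularity of the intersection form on $\Pic(X)$. The paper uses only the tail of that sequence, namely the surjectivity of $\CCH^2(X)\to\CCH^2(\CT)$, and concludes by functoriality: in the commutative square comparing $\Pic\times\Pic\to\CCH^2$ on $X$ and on $\CT$, every class in $\CCH^2(\CT)$ is a pullback of an intersection of divisor classes on $X$, hence an intersection of classes in $\Pic(\CT)$, and these vanish because $\Pic(\CT)=0$ (Th\'eor\`eme~\ref{torsoruniv}(ii)); the surjection $\Pic(X)\times\Pic(X)\twoheadrightarrow\CCH^2(X)$ is simply asserted there. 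In short, the paper trades your ``degree map $+$ unimodularity'' for ``functoriality of the intersection product $+$ $\Pic(\CT)=0$'', and never needs to identify $\CCH^2(X)$ with $\BZ$; your version is more self-contained on the surjectivity of $\cup$ and never uses $\Pic(\CT)=0$. One caveat on your secondary ``concrete'' argument: the line class on $\BP^2$ and exceptional curves of blow-ups do not cover the case where $X$ is itself a minimal Hirzebruch surface $\BF_n$ ($n\neq 1$), whose intersection form can be even (e.g.\ $\BP^1\times\BP^1$ or $\BF_2$), so that no class has self-intersection $\pm 1$; there one must use the pairing of a fibre with a section. Your main unimodularity argument covers all cases, so this does not affect correctness.
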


\begin{proof}
On a un diagramme commutatif:
$$\xymatrix{\Pic(X)\times \Pic(X)\ar@{->>}[d]\ar[r]&\Pic(\CT)\times \Pic(\CT)\ar[d]\\
\CCH^2(X)\ar[r]&\CCH^2(\CT).
}$$
D'apr\`es le Th\'eor\`eme \ref{S2T}, le morphisme $\CCH^2(X)\to \CCH^2(\CT)$ est surjectif.
Donc le morphisme $\Pic(\CT)\times \Pic(\CT)\ra \CCH^2(\CT)$ est surjectif. Puisque $\Pic(\CT)=0$, on a $\CCH^2(\CT)=0$.
\qed
\end{proof}

Une cons\'equence imm\'ediate du Th\'eor\`eme \ref{S2T} et de la Proposition \ref{coh} est:

\begin{cor}\label{corsurface}
 Soient $X$ une $k$-surface projective lisse g\'eom\'e\-triquement rationnelle et $\CT$ un torseur universel sur $X$. On a alors une suite exacte naturelle de r\'eseaux galoisiens
$$ 0 \to H^1(\CT_{\bk},\CK_2)\to \Sym^2\Pic(X_{\bk})\xrightarrow{\cup}  \BZ \to 0 .$$
\end{cor}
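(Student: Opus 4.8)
The plan is to deduce the statement directly from the four-term exact sequence of \ref{S2T} by computing its two Chow-group terms after base change to $\bk$. Since $X$ is geometrically rational, $X_{\bk}$ is a smooth projective rational surface over the algebraically closed field $\bk$, and $\CT_{\bk}$ is a universal torsor over it; by \ref{H0K2UD} (or \ref{torsoruniv}(v)) the variety $\CT_{\bk}$ is rational. First I would record that $\Pic(X_{\bk})$ is a torsion-free finitely generated $\Gamma_k$-module (it is the type lattice $T^{*}$ of the torsor, hence a N\'eron--Severi lattice of a rational surface), so $\Sym^2\Pic(X_{\bk})$ and every subquotient in sight is a Galois lattice; it therefore suffices to produce the sequence as one of abelian groups and check its $\Gamma_k$-equivariance termwise.

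The first computation is $\CCH^2(X_{\bk}) \cong \BZ$. For a smooth projective surface one has $\CCH^2 = \CCH_0$, the Chow group of zero-cycles, and for a rational surface over an algebraically closed field the subgroup $A_0(X_{\bk})$ of classes of degree $0$ vanishes (it is a birational invariant of smooth projective surfaces and is trivial on $\BP^2$); hence the degree map gives $\CCH^2(X_{\bk}) \xrightarrow{\sim} \BZ$. This isomorphism is $\Gamma_k$-equivariant with trivial action on $\BZ$, since $\Gamma_k$ permutes the $\bk$-points (all of degree $1$) while preserving degrees. Under it the map $\cup$ of \ref{S2T}, induced by the intersection product $\Pic(X_{\bk}) \times \Pic(X_{\bk}) \to \CCH^2(X_{\bk})$, becomes the intersection form valued in $\BZ$, which is exactly the arrow $\Sym^2\Pic(X_{\bk}) \to \BZ$ claimed.

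The second computation is $\CCH^2(\CT_{\bk}) = 0$, which is precisely \ref{coh} applied over $\bk$ to the rational surface $X_{\bk}$ and its universal torsor $\CT_{\bk}$ (recall this uses $\Pic(\CT_{\bk})=0$ together with the surjectivity $\CCH^2(X_{\bk}) \to \CCH^2(\CT_{\bk})$ coming from \ref{S2T}). Feeding both computations into the four-term sequence of \ref{S2T},
$$0 \to H^1(\CT_{\bk},\CK_2) \to \Sym^2\Pic(X_{\bk}) \xrightarrow{\cup} \CCH^2(X_{\bk}) \to \CCH^2(\CT_{\bk}) \to 0,$$
the final term drops out and the penultimate term becomes $\BZ$, yielding the asserted short exact sequence, with $H^1(\CT_{\bk},\CK_2)$ appearing as the kernel of the intersection form.

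I do not anticipate a genuine obstacle, since all the real content is imported from \ref{S2T} and \ref{coh}. The only points demanding care are the vanishing $A_0(X_{\bk})=0$ for a geometrically rational surface, which makes the degree map an isomorphism, and the verification that this degree isomorphism is compatible with the $\Gamma_k$-action, so that the output is an exact sequence of Galois lattices and not merely of abelian groups.
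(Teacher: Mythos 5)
Your proposal is correct and follows exactly the paper's route: the paper derives this corollary as an immediate consequence of Th\'eor\`eme \ref{S2T} (the four-term sequence) and Proposition \ref{coh} ($\CCH^2(\CT_{\bk})=0$), together with the standard identification $\CCH^2(X_{\bk})\cong\BZ$ for a rational surface over $\bk$. Your additional remarks on $A_0(X_{\bk})=0$ and on Galois equivariance of the degree map are just the details the paper leaves implicit.
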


\begin{thm}\label{H4Z2}
Soient $X$ une $k$-surface projective lisse g\'eom\'etriquement rationnelle et $\CT$ un torseur universel sur $X$. Alors
\begin{itemize}
\item[\rm (1)] Les groupes $\BH^4(\CT,\BZ (2))/\BH^4(k,\BZ (2))$, $\frac{H^3_{\nr}(\CT,\BQ/\BZ(2))}{H^3(k,\BQ/\BZ(2))}$ et $\CCH^2(\CT)$ sont finis;
\item[\rm (2)] On a des suites exactes $$0\to \BH^4(\CT,\BZ (2))/\BH^4(k,\BZ (2))\to H^1(k,H^1(\CT_{\bk},\CK_2))\to \Ker(\BH^5(k,\BZ(2))\to \BH^5(\CT,\BZ(2)))$$
et
$$(\Sym^2\Pic(X_{\bk}))^{\Gamma_k}\xrightarrow{\cup}\BZ\cong \CCH^2(X_{\bk})\ra H^1(k,H^1(\CT_{\bk},\CK_2))\ra H^1(k,\Sym^2\Pic(X_{\bk}))\ra 0;$$
\item[\rm (3)] Si $H^1(k,\Sym^2\Pic(X_{\bk}))=0$ et $X(k)\neq \emptyset$, alors $\frac{H_{\nr}^3(\CT,\BQ/\BZ(2))}{H^3(k,\BQ/\BZ(2))}=0$.
\end{itemize}
\end{thm}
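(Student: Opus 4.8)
The plan is to compare the motivic cohomology of $\CT$ with that of $\Spec k$ by means of the Hochschild--Serre spectral sequence
$$E_2^{p,q}=H^p(k,\BH^q(\CT_{\bk},\BZ(2)))\Rightarrow\BH^{p+q}(\CT,\BZ(2)),$$
so first I would pin down the $\Gamma_k$-modules $\BH^q(\CT_{\bk},\BZ(2))$. By Th\'eor\`eme \ref{motivic} one has $\BH^0=0$, $\BH^1=K_{3,indec}(\bk(\CT))$, $\BH^2=H^0(\CT_{\bk},\CK_2)$, $\BH^3=H^1(\CT_{\bk},\CK_2)$ and $\BH^4=\CCH^2(\CT_{\bk})$; Corollaire \ref{H0K2UD} shows $\BH^2$ is uniquement divisible, while Th\'eor\`eme \ref{S2T} gives $H^3_{\nr}(\CT_{\bk},\BQ/\BZ(2))=0$ and Proposition \ref{coh} gives $\CCH^2(\CT_{\bk})=0$, whence $\BH^4=0$. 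The decisive point is that $\BH^3(\CT_{\bk},\BZ(2))=H^1(\CT_{\bk},\CK_2)$ is, by Corollaire \ref{corsurface}, a $\Gamma_k$-r\'eseau (a sublattice of $\Sym^2\Pic(X_{\bk})$), so that $H^1(k,H^1(\CT_{\bk},\CK_2))$ is \emph{fini}.

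Next I would isolate this finite contribution. Writing $C$ for the c\^one de $\BZ(2)_k\to Rp_*\BZ(2)_{\CT}$ le long de $p\colon\CT\to\Spec k$, the long exact sequence of $C$ yields the exact sequence
$$0\to\BH^4(\CT,\BZ(2))/\BH^4(k,\BZ(2))\to\BH^4(k,C)\to\Ker(\BH^5(k,\BZ(2))\to\BH^5(\CT,\BZ(2)))\to0.$$
The cohomology sheaves of $C$ vanish in degrees $\le0$; in degree $2$ the sheaf is $\Coker(K_2(\bk)\to H^0(\CT_{\bk},\CK_2))$, uniquement divisible as a cokernel of an injection of uniquement divisible groups; in degree $1$ it is $\Coker(K_{3,indec}(\bk)\to K_{3,indec}(\bk(\CT)))$, which is divisible and torsion-free (both torsion subgroups being $\cong\BQ/\BZ(2)$ and mapping isomorphiquement), hence again uniquement divisible; and in degree $3$ it is the r\'eseau $H^1(\CT_{\bk},\CK_2)$. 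Since a uniquement divisible Galois module has uniquement divisible cohomology in every degree, the spectral sequence for $\BH^*(k,C)$ identifies $\BH^4(k,C)$ with $H^1(k,H^1(\CT_{\bk},\CK_2))$ modulo uniquement divisible groups; tracking torsion gives the first exact sequence of (2). The second exact sequence of (2) is simply the $\Gamma_k$-cohomology long exact sequence attached to the sequence of r\'eseaux of Corollaire \ref{corsurface}, using $H^1(k,\BZ)=0$.

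Part (1) then follows. Comparing the exact sequence of Th\'eor\`eme \ref{motivic}(ii) for $\CT$ with the one for $\Spec k$ (where $\CCH^2(\Spec k)=0$, so $\BH^4(k,\BZ(2))\cong H^3(k,\BQ/\BZ(2))$) and applying the snake lemma gives an exact sequence $\CCH^2(\CT)\to\BH^4(\CT,\BZ(2))/\BH^4(k,\BZ(2))\to H^3_{\nr}(\CT,\BQ/\BZ(2))/H^3(k,\BQ/\BZ(2))\to0$. As $\BH^4(\CT,\BZ(2))/\BH^4(k,\BZ(2))$ embeds into the finite group $H^1(k,H^1(\CT_{\bk},\CK_2))$, it is finite, and hence so are the quotient on the right and, via the injectivity of the cycle class map together with $\CCH^2(\CT_{\bk})=0$, the group $\CCH^2(\CT)$.

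For part (3), the hypothesis $X(k)\neq\emptyset$ provides a rational point on $\CT$ (replacing $\CT$ by the universal torsor trivial at that point), so $p$ admits a section and $\BH^5(k,\BZ(2))\to\BH^5(\CT,\BZ(2))$ is injectif; the first sequence of (2) then becomes an isomorphism $\BH^4(\CT,\BZ(2))/\BH^4(k,\BZ(2))\cong H^1(k,H^1(\CT_{\bk},\CK_2))$. Feeding $H^1(k,\Sym^2\Pic(X_{\bk}))=0$ into the second sequence of (2) identifies this group with the cyclic quotient $\Coker((\Sym^2\Pic(X_{\bk}))^{\Gamma_k}\xrightarrow{\cup}\BZ)$. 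It remains to show the cycle class map $\CCH^2(\CT)\to\BH^4(\CT,\BZ(2))/\BH^4(k,\BZ(2))$ is surjectif: I would use the class of the fibre $[\CT_P]=g^*[P]\in\CCH^2(\CT)$ over $P\in X(k)$ (of the correct codimension, and restricting to $0$ in $\CCH^2(\CT_{\bk})$), and verify that under the identifications above it maps to a generator of the cyclic group, namely the image of the generator of $\CCH^2(X_{\bk})=\BZ$ under the connecting homomorphism of Corollaire \ref{corsurface}. Surjectivity then forces $H^3_{\nr}(\CT,\BQ/\BZ(2))/H^3(k,\BQ/\BZ(2))=0$.

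The hard part will be the compatibility invoked in the last paragraph --- matching the cycle class of the fibre $[\CT_P]$ with the connecting map $\CCH^2(X_{\bk})=\BZ\to H^1(k,H^1(\CT_{\bk},\CK_2))$ through the cone $C$ and the spectral sequence --- together with the uniquement-divisible bookkeeping needed to make the first sequence of (2) exact on the nose rather than merely up to uniquement divisible groups.
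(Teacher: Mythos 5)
Your parts (1) and (2) are correct and are essentially the paper's own argument: the cone/Hochschild--Serre computation you spell out (unique divisibility of the cohomology of the cone in degrees $1$ and $2$, vanishing of $\BH^4(\CT_{\bk},\BZ(2))$ via Proposition \ref{coh} and Th\'eor\`eme \ref{S2T}, finiteness of $H^1(k,H^1(\CT_{\bk},\CK_2))$) is exactly what the paper imports from \cite[\S 2.2]{CT1}, and both you and the paper get the second sequence of (2) by taking Galois cohomology of Corollaire \ref{corsurface}. (Your one-line finiteness claim for $\CCH^2(\CT)$ is at the same level of detail as the paper's, so I leave it aside.) The genuine gaps are in (3), and there are two. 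First, you may not \emph{replace $\CT$ by the universal torsor trivial at that point}: the statement concerns a given, arbitrary universal torsor $\CT$, and $X(k)\neq\emptyset$ does not give $\CT(k)\neq\emptyset$. Distinct universal torsors are non-isomorphic twists of one another --- this is the whole point of descent theory, the $k$-points of $X$ being distributed among the images of all the twists --- and nothing relates $H^3_{\nr}$ of one twist to that of another; proving the vanishing for the twist that has a $k$-point does not prove it for $\CT$. Second, and decisively, the step your whole argument rests on --- that under $\BH^4(\CT,\BZ(2))/\BH^4(k,\BZ(2))\hookrightarrow H^1(k,H^1(\CT_{\bk},\CK_2))$ the class of the fibre $g^*[P]$ maps to $\delta(1)$, where $\delta$ is the connecting map attached to Corollaire \ref{corsurface} --- is precisely what you do not prove. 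This compatibility between the Hochschild--Serre edge map for $\CT\to\Spec k$ and the Galois connecting map of $0\to H^1(\CT_{\bk},\CK_2)\to \Sym^2\Pic(X_{\bk})\to \CCH^2(X_{\bk})\to 0$ is, I believe, true (it amounts to identifying the extension class of that sequence with the classifying map of the truncation $\tau^{\geq 3}\tau^{\leq 4}$ of the cone of $\BZ_X(2)\to Rg_*\BZ_{\CT}(2)$, and then comparing Yoneda products), but it is a genuine piece of homological algebra and not a formality; as you say yourself, it is the hard part, and without it the proof of (3) does not close.

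For contrast, the paper's proof of (3) makes both difficulties evaporate with a different device: take $x\in X(k)$, set $U=X\setminus\{x\}$ and $\CT_U=\CT\times_XU$, so that $\CT\setminus\CT_U=g^{-1}(x)$ has codimension $2$ in $\CT$. Since $\CCH^2(U_{\bk})=0$, Corollaire \ref{S2Tcor} gives $H^1(\CT_{U,\bk},\CK_2)\cong\Sym^2\Pic(X_{\bk})$ --- the \emph{full} symmetric square, not the kernel of the cup-product --- so the hypothesis $H^1(k,\Sym^2\Pic(X_{\bk}))=0$ kills the entire obstruction group; the injection $\BH^4(\CT_U,\BZ(2))/\BH^4(k,\BZ(2))\hookrightarrow H^1(k,H^1(\CT_{U,\bk},\CK_2))$, which is valid without any rational point on $\CT$, then forces $\BH^4(\CT_U,\BZ(2))/\BH^4(k,\BZ(2))=0$, and one concludes by invariance of $H^3_{\nr}$ under removal of closed subsets of codimension $\geq 2$. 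In other words, the rational point $x$ is used to absorb $\CCH^2(X_{\bk})\cong\BZ$ into the base, not to produce a cycle on $\CT$: no generator has to be exhibited and no compatibility of connecting maps is needed. If you wish to salvage your route, observe that you only need the injectivity (not the bijectivity) of $\BH^4(\CT,\BZ(2))/\BH^4(k,\BZ(2))\to H^1(k,H^1(\CT_{\bk},\CK_2))$, so the illegitimate replacement of $\CT$ can simply be dropped and $P\in X(k)$ used directly; but the cycle-class compatibility would still have to be proven, and at that point the paper's truncation-to-$U$ trick is strictly shorter.
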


\begin{proof}
Par la Proposition \ref{coh} et le Th\'eor\`eme \ref{S2T}, $\CCH^2(\CT_{\bk})=0$, $\BH^4(\CT_{\bk},\BZ (2))=0$ et $H^1(\CT_{\bk},\CK_2)$ est de type fini sans torsion.
Ainsi $H^1(k,H^1(\CT_{\bk},\CK_2))$ est fini.
Par \cite[\S 2.2]{CT1} et le fait que $H^0(\CT_{\bk},\CK_2)$ est uniquement divisible (Corollaire \ref{H0K2UD}), on a une suite exacte:
\begin{equation}\label{thmH4Z2e}
\BH^4(k,\BZ (2))\ra \BH^4(\CT,\BZ (2))\ra H^1(k,H^1(\CT_{\bk},\CK_2))\to \Ker(\BH^5(k,\BZ(2))\to \BH^5(\CT,\BZ(2))).
\end{equation}
La premi\`ere suite exacte de (2) en d\'ecoule. 
Par la suite exacte (\ref{e1}), les groupes $\frac{H^3_{\nr}(\CT,\BQ/\BZ(2))}{H^3(k,\BQ/\BZ(2))}$ et $\CCH^2(\CT)$ sont finis.

Puisque $\BH^4(X_{\bk},\BZ(2))\cong \CCH^2(X_{\bk})\cong \BZ$, on a $H^1(k,\BH^4(X_{\bk},\BZ(2)))=0$.   
Une application  du Corollaire \ref{corsurface} donne la deuxi\`ere suite exacte de (2).

Soit $F\sbt \CT$ un sous-ensemble ferm\'e de codimension $\geq 2$.
Alors $\CCH^2((\CT\setminus F)_{\bk})=0$,
$$H^3_{\nr}((\CT\setminus F)_{\bk},\BQ/\BZ(2))\cong H^3_{\nr}(\CT_{\bk},\BQ/\BZ(2))=0\ \ \ \text{et donc}\ \ \ H^4((\CT\setminus F)_{\bk},\BZ(2))=0.$$
Par la r\'esolution de Gersten (\ref{resolutiongersten}), on a un isomorphisme 
entre $H^0(\CT_{\bk},\CK_2)$ et $H^0((\CT\setminus F)_{\bk},\CK_2)$
et donc $H^0((\CT\setminus F)_{\bk},\CK_2)$ est uniquement divisible.
  D'apr\`es \cite[\S 2.2]{CT1}, la suite exacte (\ref{thmH4Z2e}) vaut aussi pour $\CT\setminus F$. Donc on a  un morphisme naturel injectif: 
\begin{equation}\label{H4Z2-e}
\BH^4(\CT\setminus F,\BZ(2))/\BH^4(k,\BZ(2))\hookrightarrow H^1(k,H^1((\CT\setminus F)_{\bk},\CK_2)),
\end{equation}
qui est un isomorphisme si $(\CT\setminus F)(k)\neq\emptyset $.

Sous les hypoth\`eses de (3), soient $x\in X(k)$, $U:=X\setminus \{x\}$ et $\CT_U:=\CT\times_XU$.
Alors $\codim(\CT\setminus \CT_U,\CT)\geq 2$.
Par la r\'esolution de Gersten (\ref{resolutiongersten}), on a une suite exacte:
$$0\to H^1(X_{\bk},\CK_2)\to H^1(U_{\bk},\CK_2)\to \BZ\cdot x\xrightarrow{\phi} \CCH^2(X_{\bk})\to \CCH^2(U_{\bk})\to 0.$$
Puisque $\phi$ est un isomorphisme, on a $H^1(X_{\bk},\CK_2)\cong H^1(U_{\bk},\CK_2)$ et $\CCH^2(U_{\bk})=0$.
D'apr\`es le Corollaire \ref{S2Tcor}, $H^1(\CT_{U,\bk},\CK_2)\cong \Sym^2\Pic(X_{\bk})$.
Par hypoth\`eses, $H^1(k, H^1(\CT_{U,\bk},\CK_2))=0$.
D'apr\`es (\ref{H4Z2-e}), le groupe $\BH^4(\CT_U,\BZ(2))/$ $\BH^4(k,\BZ(2))$ est trivial.
Une  application du fait $H^3_{\nr}(\CT,\BQ/\BZ(2))\cong H^3_{\nr}(\CT_U,\BQ/\BZ(2))$ donne (3).
\qed
\end{proof}

\begin{rem}{\rm
Dans le Th\'eor\`eme \ref{H4Z2}, le crit\`ere $H^1(k,\Sym^2\Pic(X_{\bk}))=0$ n'est pas birationnellement invariant.
En fait, soit $X$ une surface projective lisse g\'eom\'e\-triquement rationnelle avec $X(k)\neq\emptyset$ 
et $X'$  un \'eclatement de $X$ en  un $k$-point.
Supposons   $H^1(k,\Pic(X_{\bk}))\neq 0$ (cf. \cite[\S 31]{Ma} pour des exemples).
Alors $\Pic(X'_{\bk})\iso \Pic(X_{\bk})\oplus \BZ$ et
 $$H^1(k,\Sym^2\Pic(X'_{\bk}))\cong H^1(k,\Sym^2\Pic(X_{\bk})\oplus \Pic(X_{\bk})\oplus \BZ).$$
Alors, m\^eme si  $H^1(k,\Sym^2\Pic(X_{\bk}))=0$,
on a $H^1(k,\Sym^2\Pic(X'_{\bk}))\neq 0$.}
\end{rem}

\begin{cor}\label{fini}
\ \!\!\!Sous\,les\,hypoth\`eses\,du\,Th\'eor\`eme\,\ref{H4Z2},\,le\,morphisme\,canonique\,$H^3(\CT,\BQ/\BZ(2)) \to H^3_{\nr}(\CT,\BQ/\BZ(2))$ est surjectif.
\end{cor}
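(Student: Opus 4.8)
The plan is to factor the canonical restriction map through the projection appearing in the sequence \eqref{e1} and a Bockstein homomorphism, and to prove surjectivity of each factor. The two structural inputs are the following. On one hand, Theorem~\ref{motivic} applied to $Y=\CT$ gives the exact sequence \eqref{e1}
$$0\to\CCH^2(\CT)\xrightarrow{\mathrm{cl}}\BH^4(\CT,\BZ(2))\xrightarrow{\pi} H^3_{\nr}(\CT,\BQ/\BZ(2))\to 0,$$
so $\pi$ is already surjective. On the other hand, passing to the colimit over $n$ in the triangle \eqref{triangleZ2} produces a triangle $\BZ(2)\to\BQ(2)\to\BQ/\BZ(2)\xrightarrow{+1}$ in $D^+_{\acute{e}t}(\CT)$.

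First I would extract the Bockstein. The hypercohomology long exact sequence of this triangle gives a connecting map $\delta\colon H^3(\CT,\BQ/\BZ(2))\to\BH^4(\CT,\BZ(2))$ whose image is $\ker\!\big(\BH^4(\CT,\BZ(2))\to\BH^4(\CT,\BQ(2))\big)$. Since $\BQ(2)=\BZ(2)\otimes_{\BZ}\BQ$ and étale hypercohomology of a variety commutes with this filtered colimit, the displayed map is the rationalization $M\to M\otimes_{\BZ}\BQ$ for $M=\BH^4(\CT,\BZ(2))$, whose kernel is exactly $M_{tors}$. Hence $\delta$ surjects onto $\BH^4(\CT,\BZ(2))_{tors}$.

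Next I would show that $\pi$ remains surjective after restriction to torsion. The group $H^3_{\nr}(\CT,\BQ/\BZ(2))$ is torsion (it is a subgroup of $\BQ/\BZ(2)$-cohomology of the function field), and by Theorem~\ref{H4Z2}(1) its preimage obstruction $\CCH^2(\CT)$ is finite, so $\CCH^2(\CT)\otimes_{\BZ}\BQ/\BZ=0$. Applying $\mathrm{Tor}^{\BZ}_{*}(-,\BQ/\BZ)$ to \eqref{e1} and using the identity $\mathrm{Tor}_1(A,\BQ/\BZ)=A_{tors}$, the resulting exact sequence reads
$$\CCH^2(\CT)_{tors}\to\BH^4(\CT,\BZ(2))_{tors}\xrightarrow{\pi} H^3_{\nr}(\CT,\BQ/\BZ(2))\to\CCH^2(\CT)\otimes_{\BZ}\BQ/\BZ=0,$$
so $\pi$ carries $\BH^4(\CT,\BZ(2))_{tors}$ onto $H^3_{\nr}(\CT,\BQ/\BZ(2))$. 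Composing with the surjection $\delta$ of the previous step yields a surjection $\pi\circ\delta\colon H^3(\CT,\BQ/\BZ(2))\twoheadrightarrow H^3_{\nr}(\CT,\BQ/\BZ(2))$.

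The remaining, and main, point is to identify $\pi\circ\delta$ with the canonical morphism of the statement, i.e.\ the edge map $H^3(\CT,\BQ/\BZ(2))\to H^0_{\mathrm{Zar}}(\CT,\CH^3(\CT,\BQ/\BZ(2)))=H^3_{\nr}(\CT,\BQ/\BZ(2))$ of the coniveau (Bloch--Ogus) spectral sequence. This is where care is needed: one must verify that Kahn's projection $\pi$ in \eqref{e1} is constructed compatibly with the Bockstein and with the Gersten resolution of $\BZ(2)$, so that $\pi\circ\delta$ is precisely the restriction-to-unramified map (up to sign). I expect this compatibility, which is built into the formalism of \cite{K96}, to be the only delicate step; granting it, surjectivity of the canonical morphism follows at once from the surjectivity of $\delta$ and of $\pi|_{tors}$ established above.
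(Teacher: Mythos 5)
Your reduction is sound, and in substance it follows the same line as the paper: everything comes down to the fact that the cokernel of the canonical map $H^3(\CT,\BQ/\BZ(2)) \to H^3_{\nr}(\CT,\BQ/\BZ(2))$ is controlled by $\CCH^2(\CT)\otimes\BQ/\BZ$, which vanishes because $\CCH^2(\CT)$ is finite by the Th\'eor\`eme \ref{H4Z2} (1). Your first two steps are correct: the image of the Bockstein $\delta$ is $\BH^4(\CT,\BZ(2))_{tors}$ (\'etale hypercohomology of the uniformly bounded complexes $\BZ(2)$, $\BQ(2)$, $\BZ/n(2)$ on the quasi-compact quasi-s\'epar\'e scheme $\CT$ does commute with the relevant filtered colimits), and the Tor argument showing that $\pi$ maps $\BH^4(\CT,\BZ(2))_{tors}$ onto $H^3_{\nr}(\CT,\BQ/\BZ(2))$ is valid, since $H^3_{\nr}$ is torsion and $\CCH^2(\CT)\otimes\BQ/\BZ=0$.

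However, as written your proof is incomplete at exactly the point that carries all the content: the identification of $\pi\circ\delta$ with the canonical map is only \emph{asserted}, not proved. Until that is established, you have shown that \emph{some} map $H^3(\CT,\BQ/\BZ(2))\to H^3_{\nr}(\CT,\BQ/\BZ(2))$ is surjective, not the one in the statement. The compatibility is true --- it follows from the functoriality of the Leray spectral sequence for $\alpha\colon \CT_{\acute{e}t}\to\CT_{\mathrm{Zar}}$ applied to $\delta\colon \BQ/\BZ(2)\to\BZ(2)[1]$, together with the identification $R^4\alpha_*\BZ(2)\cong \CH^3(\CT,\BQ/\BZ(2))$ resting on Merkurjev--Suslin, which is how the projection in \eqref{e1} is constructed in \cite{K96} --- but it must be either carried out or cited. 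This is precisely what the paper does instead of rederiving the sequence: it invokes \cite[Suite 3.10]{CT2}, which provides the exact sequence
$0\to NH^3(\CT,\BQ/\BZ(2))\to H^3(\CT,\BQ/\BZ(2))\to H^3_{\nr}(\CT,\BQ/\BZ(2))\to \CCH^2(\CT)\otimes\BQ/\BZ$
in which the middle arrow \emph{is} the canonical morphism, and then concludes in one line from the finiteness of $\CCH^2(\CT)$. Replacing your final paragraph by this citation (or by a reference to \cite[\S 2]{CT1}) closes the gap, and the resulting argument is essentially the paper's proof.
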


\begin{proof}
Par \cite[Suite 3.10]{CT2}, on a une suite exacte:
$$0\ra NH^3(\CT,\BQ/\BZ(2))\ra H^3(\CT,\BQ/\BZ(2))\ra H^3_{\nr}(\CT,\BQ/\BZ(2))\ra \CCH^2(\CT)\otimes \BQ/\BZ.$$
Par le Th\'eor\`eme \ref{H4Z2}, $\CCH^2(\CT)$ est fini, et donc $\CCH^2(\CT)\otimes \BQ/\BZ=0$. Donc le morphisme $H^3(\CT,\BQ/\BZ(2))\ra H^3_{\nr}(\CT,\BQ/\BZ(2))$ est surjectif.
\qed
\end{proof}

\section{Surfaces de Ch\^atelet g\'en\'eralis\'ees}

Dans cette section,  $X$ est une surface de Ch\^atelet g\'en\'eralis\'ee, 
c'est-\`a-dire un fibr\'e en coniques sur $\BP^1_{k}$ poss\'edant une section sur une extension quadratique $k'$ de $k$.
Soient $\CT$ un torseur universel de $X$ et $\CT^c$ une compactification lisse de $\CT$.
Alors $X_{k'}$ est $k'$-rationnel et, par le corollaire \ref{zerorationnel}, 
$\frac{H_{\nr}^3(\CT^c,\BQ/\BZ(2))}{H^3(k,\BQ/\BZ(2))}$ est annul\'e par~$2$.

Le r\'esultat principal de ce paragraphe est:

\begin{thm}\label{coniccoh}
Soient $X$ une vari\'et\'e lisse projective, $P(t)$ est un polyn\^ome s\'eparable et supposons que $X$ contienne un ouvert de la forme $\Spec\ k[y,z,t]/(y^2-az^2=P(t))$.
Soient $\CT$ un torseur universel de $X$ et $\CT^c$ une compactification lisse de $\CT$. Supposons que $X(k)\neq \emptyset$.
Alors $\frac{H_{\nr}^3(\CT^c,\BQ/\BZ(2))}{H^3(k,\BQ/\BZ(2))}=0$.
\end{thm}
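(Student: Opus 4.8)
Le plan est le suivant. Quitte à tordre le torseur par un élément de $H^1(k,T)$, on peut supposer $\CT(k)\neq\emptyset$ (un tel point existe car $X(k)\neq\emptyset$), d'où $\CT^c(k)\neq\emptyset$; on fixe $P\in\CT^c(k)$, qui scinde les restrictions depuis $k$. Posons $k'=k(\sqrt a)$. Comme $X_{k'}$ est $k'$-rationnelle, le théorème \ref{torsoruniv}(v) montre que $\CT_{k'}$ est stablement $k'$-rationnelle, donc $\frac{H^3_{\nr}(\CT^c_{k'},\BQ/\BZ(2))}{H^3(k',\BQ/\BZ(2))}=0$ et (corollaire \ref{zerorationnel}) le groupe $A:=\frac{H^3_{\nr}(\CT^c,\BQ/\BZ(2))}{H^3(k,\BQ/\BZ(2))}$ est annulé par $2$. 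Je travaille donc avec les coefficients $2$-primaires, c'est-à-dire $\mu_2^{\otimes 2}\cong\BZ/2$. Soit $\alpha\in A$, relevé en $\tilde\alpha\in H^3_{\nr}(\CT^c)\subset H^3(K,\BZ/2)$ avec $K:=k(\CT^c)$; quitte à retrancher la classe constante $P^*\tilde\alpha$, on peut supposer $P^*\tilde\alpha=0$, de sorte que $\mathrm{res}_{k'}\tilde\alpha$, qui est non ramifiée et constante sur $\CT^c_{k'}$ tout en s'annulant en $P$, est \emph{nulle} dans $H^3(K',\BZ/2)$, où $K'=K(\sqrt a)$. Par le théorème d'Arason, le noyau de $H^3(K,\BZ/2)\xrightarrow{\mathrm{res}}H^3(K',\BZ/2)$ est $(a)\cup H^2(K,\mu_2)$; on écrit donc $\tilde\alpha=(a)\cup\beta$ avec $\beta\in H^2(K,\mu_2)=\Br(K)[2]$. (Noter qu'on ne peut pas conclure par le théorème \ref{H4Z2}(3) via l'inclusion du groupe de $\CT^c$ dans celui de $\CT$: pour une surface de Ch\^atelet $H^1(k,\Sym^2\Pic(X_{\bk}))$ ne s'annule pas en général, reflétant $\Br(X)/\Br(k)=\BZ/2$, et le groupe relatif à $\CT$ peut être non nul; c'est la propreté de $\CT^c$ qu'il faut exploiter.)

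J'analyse ensuite les résidus. Comme $a\in k^\times$, la classe $(a)$ est partout non ramifiée, donc pour tout point $D$ de codimension $1$ de $\CT^c$ on a $\partial_D\tilde\alpha=(a)|_{k(D)}\cup\partial_D\beta$ dans $H^2(k(D),\mu_2)$. La classe $\tilde\alpha$ étant non ramifiée, ceci fournit, pour tout $D$,
\[
(a)\cup\partial_D\beta=0\quad\text{dans }H^2(k(D),\mu_2),
\]
soit $\partial_D\beta\in\ker\big((a)\cup-\colon H^1(k(D),\BZ/2)\to H^2(k(D),\mu_2)\big)=\mathrm{Im}\,\mathrm{cor}_{k(D)(\sqrt a)/k(D)}$, par exactitude de la suite d'Arason sur $k(D)$. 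Autrement dit chaque résidu de $\beta$ est une corestriction issue de l'extension quadratique définie par $\sqrt a$.

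Le cœur de la preuve est de modifier $\beta$, sans changer $\tilde\alpha$, pour le rendre non ramifié. L'indétermination disponible est exactement $\ker\big((a)\cup-\colon\Br(K)[2]\to H^3(K,\BZ/2)\big)=\mathrm{Im}\big(\mathrm{cor}_{K'/K}\colon\Br(K')[2]\to\Br(K)\big)$. Or $\Pic(\CT^c_{\bk})$ est un module de permutation (théorème \ref{torsoruniv}(iii)), et $\Br(\CT^c_{\bk})=0$, $\bk[\CT^c]^\times=\bk^\times$ (corollaire \ref{H0K2UD}); la suite de Hochschild--Serre donne alors $\Br(\CT^c)=\Br(k)$, et de même $\Br(\CT^c_{k'})=\Br(k')$ : la $k'$-variété $\CT^c_{k'}$ n'a aucune classe de Brauer non ramifiée non constante. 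J'utilise cette annulation pour construire $\eta\in\Br(K')[2]$ réalisant les résidus voulus : via $\partial_D\,\mathrm{cor}_{K'/K}(\eta)=\sum_{D'\mid D}\mathrm{cor}_{k(D')/k(D)}\,\partial_{D'}(\eta)$ et l'appartenance $\partial_D\beta\in\mathrm{Im}\,\mathrm{cor}$, on prescrit les résidus locaux de $\eta$, l'obstruction globale au recollement étant mesurée par $H^1(k',\Pic(\CT^c_{\bk}))=0$. Posant $\beta':=\beta-\mathrm{cor}_{K'/K}(\eta)$, on a $(a)\cup\beta'=\tilde\alpha$ et $\partial_D\beta'=0$ pour tout $D$, donc $\beta'\in\Br(\CT^c)[2]=\Br(k)[2]$ ; comme $\CT^c(k)\neq\emptyset$ scinde $\Br(k)\to\Br(\CT^c)$, la classe $\tilde\alpha=(a)\cup\beta'$ provient de $H^3(k,\BQ/\BZ(2))$, c'est-à-dire $\alpha=0$ dans $A$.

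Le principal obstacle est l'étape de réalisation des résidus : il faut contrôler avec soin les places $D'$ de $K'$ au-dessus de chaque $D$ (selon que $K'/K$ est décomposée ou inerte en $D$), vérifier que les données locales $\{\partial_{D'}\eta\}$ prescrites satisfont les relations de réciprocité imposées sur $\CT^c_{k'}$, puis invoquer $H^1(k',\Pic(\CT^c_{\bk}))=0$ pour les recoller en une classe globale $\eta$. C'est là qu'interviennent de façon essentielle la structure de module de permutation de $\Pic(\CT^c_{\bk})$ (théorème \ref{torsoruniv}(iii)) et l'hypothèse $X(k)\neq\emptyset$, qui élimine à deux reprises l'ambiguïté constante.
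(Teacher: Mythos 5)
There is a genuine gap at the heart of your argument. The crux of your proof is the claim that $\beta\in\Br(K)[2]$ can be corrected by a corestricted class $\mathrm{cor}_{K'/K}(\eta)$ so as to become unramified on $\CT^c$, the ``obstruction globale au recollement'' being measured by $H^1(k',\Pic(\CT^c_{\bk}))=0$. That group is not the relevant obstruction: via Hochschild--Serre it controls the \emph{unramified} algebraic classes, i.e. $\Br_1(\CT^c_{k'})/\Br(k')$, whereas what you need is to realize a prescribed family of \emph{nonzero} residues $\{\partial_{D'}\eta\}$ as the ramification of a global class $\eta\in\Br(K')[2]$. By the Bloch--Ogus/Gersten resolution, a family of residues satisfying the codimension-$2$ compatibility comes from a class in $H^2(K',\BQ/\BZ(1))$ if and only if its class in $H^1_{\mathrm{Zar}}\bigl(\CT^c_{k'},\CH^2\bigr)$ vanishes, where $\CH^2$ is the Zariski sheaf associated to $U\mapsto H^2_{\acute{e}t}(U,\BQ/\BZ(1))$. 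On a variety of dimension $\geq 3$ there is no Faddeev-type exactness, this group has no reason to vanish a priori, and in addition you would have to choose the local data (the norm preimages at inert divisors, the splittings at split divisors) so that the codimension-$2$ reciprocity constraints hold. You flag this step yourself as ``le principal obstacle'', but you do not resolve it; it is essentially as hard as the theorem. A secondary gap: the opening reduction ``quitte \`a tordre le torseur, on peut supposer $\CT(k)\neq\emptyset$'' does not prove the stated theorem, which concerns a \emph{given} universal torsor; twisting produces a different variety, and nothing in your argument transfers the conclusion from the twist back to $\CT$.

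Moreover, your parenthetical reason for rejecting the route via the th\'eor\`eme \ref{H4Z2} (3) is mistaken --- that route is exactly the paper's proof. The criterion $H^1(k,\Sym^2\Pic(X_{\bk}))=0$ is not birationally invariant (see the remark following the th\'eor\`eme \ref{H4Z2}), so one is free to choose a good model: the paper first replaces $X$ by the standard smooth projective model of the generalized Ch\^atelet surface, which is legitimate because universal torsor compactifications of $k$-birationally equivalent surfaces are stably $k$-birationally equivalent (\cite[Prop. 2.9.2]{CTS}), and then the Proposition \ref{conicsym} proves $H^1(k,\Sym^2\Pic(X_{\bk}))=0$ for that model by an explicit $\Gal(k'/k)$-stable filtration of $(\Sym^2 T^*)^{\Gamma_{k'}}$. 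Nonvanishing of $\Br(X)/\Br(k)\cong H^1(k,\Pic(X_{\bk}))$ does not imply nonvanishing of $H^1(k,\Sym^2\Pic(X_{\bk}))$, so there is no contradiction. Note finally that the th\'eor\`eme \ref{H4Z2} (3) requires only $X(k)\neq\emptyset$, not $\CT(k)\neq\emptyset$, which is why the paper's proof, unlike yours, covers arbitrary universal torsors over $X$.
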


Dans ce th\'eor\`eme, $P(t)$ est un polyn\^ome s\'eparable de degr\'e quelconque 
(dans la d\'efinition des surfaces de Ch\^atelet g\'en\'eralis\'ees dans \cite{CT87}, $P(t)$ est de degr\'e 3 ou 4).

\begin{proof}
Si $X$ et $X_1$ sont birationnellement \'equivalents, 
alors $X_1(k)\neq\emptyset$ et il existe un torseur universel $\CT_1^c$ de $X_1$ tel que $\CT^c$ et $\CT_1^c$ soient stablement birationnellement \'equivalents par \cite[Prop. 2.9.2]{CTS}.
Donc on a $H_{\nr}^3(\CT^c,\BQ/\BZ(2))\iso H_{\nr}^3(\CT_1^c,\BQ/\BZ(2))$. C'est-\`a-dire que l'on peut remplacer $X$ par $X_1$.
Ainsi, comme  dans \cite[p. 135]{sko01}, on peut supposer que $X$ est une vari\'et\'e qui v\'erifie les conditions de  la Proposition \ref{conicsym} ci-dessous.
Par la Proposition \ref{conicsym}, on a $H^1(k,\Sym^2\Pic(X_{\bk}))=0$. D'apr\`es le Th\'eor\`eme \ref{H4Z2}, on a
$\frac{H_{\nr}^3(\CT^c,\BQ/\BZ(2))}{H^3(k,\BQ/\BZ(2))}=0$.
\qed
\end{proof}

\begin{prop}\label{conicsym}
Soient $P(t)\in k[t]$ un polyn\^ome s\'eparable de degr\'e pair, $(P_i(t))_{i\in I}$ ses facteurs irr\'eductibles, et $a\in k$ un \'el\'ement tel que $a$ n'est un carr\'e de $k[t]/P_i(t)$ pour aucun $i\in I$ (i.e. chaque $P_i(t)$ est irr\'eductible dans $k(\sqrt{a})[t]$). Soit $X_1$ (resp. $X_2$) une sous-vari\'et\'e ferm\'ee de $\RP^2\times (\RP^1\setminus\infty)$
(resp. de $\RP^2\times (\RP^1\setminus 0)$) avec les coordonn\'ees $(y:z:u;t)$ (resp. $(y':z':u';t')$) donn\'ee par $y^2-az^2-P(t)u^2=0$
(resp. $(y')^{2}-a(z')^{2}-P((t')^{-1})(u')^{2}=0$). On recolle $X_1$ et $X_2$ sur $\RP^1\setminus (0\cup \infty)$ par $t=(t')^{-1}$, $y=y'$, $z=z'$
et $u=(t')^{n/2}u'$, et on la note par $X$. Notons $T^*:=\Pic(X_{\bk})$. Alors on a $H^1(k,\Sym^2T^*)=0$.
\end{prop}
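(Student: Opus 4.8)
The plan is to pin down $T^*=\Pic(X_{\bk})$ explicitly as a $\Gamma_k$-lattice and then prove that its symmetric square is, up to extensions, assembled from $\Gamma_k$-modules with vanishing $H^1$; the cleanest target is to show that $\Sym^2 T^*$ is a direct summand of a permutation module, for then $H^1(k,\Sym^2 T^*)$ is a direct summand of the vanishing group $H^1(k,\text{permutation})=0$. Since $X$ is a conic bundle $\pi:X\to\BP^1_k$ whose degenerate geometric fibres lie exactly over the $n=\deg P$ roots of $P$, the lattice $T^*$ has rank $n+2$ and sits in two exact sequences of $\Gamma_k$-modules:
\[0\to V\to T^*\xrightarrow{\deg}\BZ\to 0,\qquad 0\to I\to N\to V\to 0.\]
Here $V$ is the group of vertical classes, $N=\bigoplus_i\Ind_{F_i}^k\BZ$ is the permutation module on the $2n$ geometric components of the degenerate fibres, $I=\ker(\bigoplus_i\Ind_{E_i}^k\BZ\to\BZ)$ is an augmentation kernel, and $E_i=k[t]/P_i$, $F_i=E_i(\sqrt a)$. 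The hypothesis on $a$ enters precisely here: it guarantees that each $F_i$ is a field, hence that $\Gamma_k$ acts transitively on the components above each $P_i$ and that $N$ is a genuine permutation module; the quotient $\BZ$ (degree on the generic conic fibre) carries the trivial action.

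First I would record the cohomological building blocks: permutation modules have vanishing $H^1(k,-)$; tensor products and symmetric squares of permutation modules are again permutation modules (their bases are, respectively, the product and the symmetric-square Galois sets); and from $0\to I\to\bigoplus_i\Ind_{E_i}^k\BZ\to\BZ\to 0$, together with the surjectivity on invariants of $\bigoplus_i\BZ\to\BZ$, one gets $H^1(k,I)=0$, and likewise for $I$ tensored with a permutation module. In particular every module appearing as a graded piece of $\Sym^2 N$, $N\otimes I$ or $\Sym^2 I$ will be harmless.

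Next I would carry out the dévissage of $\Sym^2 T^*$. The extension $0\to V\to T^*\to\BZ\to 0$ gives a filtration of $\Sym^2 T^*$ with graded pieces $\Sym^2 V$, $V$ and $\BZ$; the extension $0\to I\to N\to V\to 0$, combined with $0\to\wedge^2 A\to A\otimes A\to\Sym^2 A\to 0$ and the splitting formulas (\ref{symoplus}), filters $\Sym^2 V$ and $V\otimes V$ by $\Sym^2 N$, $I\otimes N$ and $\Sym^2 I$. Rationally one finds $T^*\otimes\BQ\cong\BQ^{2}\oplus\bigoplus_i\Ind_{E_i}^k\BQ_{\chi_i}$, where $\chi_i$ is the quadratic character of $F_i/E_i$; thus the only pieces that could obstruct vanishing are the sign-twisted lattices $\Ind_{E_i}^k\BZ_{\chi_i}$, for which $H^1(k,\Ind_{E_i}^k\BZ_{\chi_i})\cong H^1(E_i,\BZ_{\chi_i})\cong\BZ/2$ is nonzero. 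This $2$-torsion is the source of the nonzero $\Br(X)/\Br(k)$, i.e. of $H^1(k,T^*)\neq 0$, and explains why one must pass to the symmetric square; in the exterior square these classes survive, which is the phenomenon behind the merely $2$-primary statement of Theorem \ref{sauf2prime}. The decisive mechanism is that the symmetric square trivialises these sign characters: since $\chi_i\otimes\chi_i$ is trivial, the diagonal part of $\Sym^2\Ind_{E_i}^k\BZ_{\chi_i}$ is the permutation module $\Ind_{E_i}^k\BZ$, and the off-diagonal parts, governed by the characters $\chi_i\chi_j$, combine with the trivial factors $\BQ^2$ so as to reconstitute permutation modules.

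The heart of the argument, and the main obstacle, is making this last step integral rather than merely rational. The difficulty is that the sign submodules are not direct summands of $\Sym^2 T^*$: for instance $f\cdot V\subset\Sym^2 T^*$ is isomorphic to $V$ and carries the potentially nonzero $H^1(k,V)$, and the naive generators $2A_i-f$ of the sign lines are not primitive. I would therefore work directly with the integral lattice, exhibiting an explicit filtration of $\Sym^2 T^*$ whose successive quotients are all permutation modules or augmentation ideals of type $I$, so that each nonzero $\BZ/2$ appears as a coboundary from an adjacent permutation layer; equivalently, constructing the complementary summand that realises $\Sym^2 T^*$ as a direct summand of a permutation module. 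Once this integral cancellation is established, the long exact cohomology sequences assemble the vanishing $H^1$ of the graded pieces into $H^1(k,\Sym^2 T^*)=0$, which is the assertion.
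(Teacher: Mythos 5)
Your proposal sets up the correct geometric framework (the conic--bundle exact sequences for $T^*$, the rational decomposition $T^*\otimes\BQ\cong\BQ^2\oplus\bigoplus_i\Ind_{E_i}^k\BQ_{\chi_i}$, and the observation that the symmetric square trivialises the quadratic characters $\chi_i$ on the diagonal), and you correctly locate the difficulty: the sign sublattices are not direct summands, so the vanishing must be established integrally. But at exactly that point the proposal stops being a proof. The sentence ``I would therefore work directly with the integral lattice, exhibiting an explicit filtration \dots\ Once this integral cancellation is established \dots'' defers precisely the step that \emph{is} the proposition; everything before it is formal. The paper's entire proof consists of carrying out that step, and it is not routine: one first reduces to the quadratic extension $k'=k(\sqrt a)$ --- since $\Gamma_{k'}$ permutes the basis $(D_{j_i})_{j_i},F,G$ of $T^*$, one gets $H^1(k',\Sym^2T^*)=0$, hence $H^1(k,\Sym^2T^*)\cong H^1(\Gal(k'/k),(\Sym^2T^*)^{\Gamma_{k'}})$ by inflation--restriction --- and then one constructs an explicit $\Gal(k'/k)$-invariant six-step filtration of the invariant lattice $(\Sym^2T^*)^{\Gamma_{k'}}$, whose successive quotients are shown to have trivial $H^1$ using combinatorial lemmas on $\Gamma_{k'}$-orbits of pairs of roots (Lemmes \ref{orbitodd} et \ref{orbiteven}) together with parity arguments (sublattices of odd index, special orbits $S_i^0$ with odd multiplier $s_i$). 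Nothing in your proposal substitutes for this construction, and you do not make the reduction to $\Gal(k'/k)$ that renders it feasible.

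Moreover, one of your declared building blocks is false, and this defect propagates through the plan. For $I=\ker\bigl(\bigoplus_i\Ind_{E_i}^k\BZ\to\BZ\bigr)$, the map on invariants is \emph{not} surjective in general: the invariants of $\Ind_{E_i}^k\BZ$ are generated by the diagonal element, which maps under the augmentation to $\deg P_i$, so the image is $\gcd_i(\deg P_i)\cdot\BZ$ and $H^1(k,I)\cong\BZ/\gcd_i(\deg P_i)$. Since $\deg P$ is even, the gcd is frequently even (e.g.\ $P$ irreducible of degree $4$, or a product of two quadratics --- the standard Ch\^atelet cases), so $H^1(k,I)\neq 0$. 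Consequently ``augmentation ideals of type $I$'' are not harmless graded pieces, and neither are $I\otimes N$ nor $V$ (as you yourself note, $H^1(k,V)$ can be nonzero --- it surjects onto $H^1(k,T^*)$, i.e.\ essentially onto $\Br(X)/\Br(k)$). A filtration of $\Sym^2T^*$ whose quotients include such modules cannot conclude by the long exact sequences, and this is precisely why the paper does not filter $\Sym^2T^*$ by the conic-bundle sequences at all, but instead performs a bespoke computation with the $\BZ/2$-action on $(\Sym^2T^*)^{\Gamma_{k'}}$.
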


Notons $k':=k(\sqrt{a})$. Soient $\sigma\in \Gal(k'/k)$ l'\'el\'ement non-trivial, $(P_i(t))_{i\in I}$ les facteurs irr\'e\-ductibles de $P(t)$,
et $(e_{j_i})_{j_i\in J_i}$ les racines de $P_i(t)$ dans $\bk$. Soient $O_{i,i'}$ l'ensemble des $\Gamma_{k'}$-orbites de $J_i\times J_{i'}$ si $i\neq i'$, et $O_{i,i}$ l'ensemble des $\Gamma_{k'}$-orbites de $J_i\times J_i/\sim $, o\`u $\sim$ est donn\'e par $(j_i,j'_i)\sim (j'_i,j_i)$. Soit $\Delta_i$ l'orbite diagonale de $O_{i,i}$.
Pour une orbite $S$, on note $|S|$ le nombre des \'el\'ements de $S$. L'action de $\Gamma_k$ sur $J_i$ et $J_{i'}$ induit une action de $\sigma$ sur $O_{i,i'}$. Ainsi $\sigma (\Delta_i)=\Delta_i$.

\begin{lem}\label{orbitodd}
Si $|J_{i_0}|$ est impair, alors il existe au moins une orbite $S\in O_{i_0,i}$ pour chaque $i\neq i_0$, telle que $\sigma(S)=S$.
\end{lem}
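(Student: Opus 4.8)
The plan is to recast this combinatorial statement about $\sigma$-orbits into a statement about the Galois group $\Gamma_k$ and then settle it by a short parity count that feeds on the oddness of $|J_{i_0}|$. Fix a finite Galois subextension of $\bk/k$ through which the actions on all the $J_i$ and on $\sqrt a$ factor, and write $\tilde G := \Gamma_k$ and $G := \Gamma_{k'}$, so that $G$ is normal of index $2$ in $\tilde G$ with $\tilde G/G = \langle \sigma\rangle$. Since each $P_i$ is irreducible over $k$ and, by the standing hypothesis of Proposition \ref{conicsym}, also over $k'$, both $\tilde G$ and $G$ act transitively on every $J_i$. The first step is the translation: an orbit $S \in O_{i_0,i}$ is a $G$-orbit on $J_{i_0}\times J_i$, and $\sigma(S)=S$ holds exactly when the $\tilde G$-orbit through a point of $S$ does not split upon restriction to $G$, i.e. exactly when some $(x,y)\in J_{i_0}\times J_i$ has $\tilde G$-stabilizer $\tilde G_{(x,y)} \not\subseteq G$. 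Concretely, producing a $\sigma$-fixed orbit amounts to producing $\tau\in\tilde G\setminus G$ together with $x\in J_{i_0}$ and $y\in J_i$ such that $\tau x=x$ and $\tau y=y$ — equivalently, a root $\alpha$ of $P_{i_0}$ and a root $\beta$ of $P_i$ with $\sqrt a\notin k(\alpha,\beta)$.

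Second, I would fix any $y\in J_i$ (a root $\beta$ of $P_i$) and study the stabilizer $\tilde G_y$ acting on $J_{i_0}$. Writing $G_y := \tilde G_y\cap G$, the hypothesis that $a$ is not a square in $k(\beta)=k[t]/P_i(t)$ forces $\tilde G_y\not\subseteq G$, hence $[\tilde G_y:G_y]=2$ with $G_y$ normal in $\tilde G_y$. This is where oddness enters: decomposing $J_{i_0}$ into $\tilde G_y$-orbits, the fact that $|J_{i_0}|$ is odd guarantees at least one such orbit $O$ of odd cardinality. Within a single $\tilde G_y$-orbit the $G_y$-orbits are permuted transitively by $\tilde G_y/G_y\cong\BZ/2$, so such an orbit either remains a single $G_y$-orbit or splits into two $G_y$-orbits of equal size; the latter would make its cardinality even. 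As $|O|$ is odd it cannot split, so for $x\in O$ the stabilizer $(\tilde G_y)_x=\tilde G_y\cap\tilde G_x$ is not contained in $G_y$, and therefore not in $G$.

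Finally, any $\tau\in(\tilde G_y)_x\setminus G$ fixes $x\in J_{i_0}$ and $y\in J_i$ while acting nontrivially on $k'$; thus $\tilde G_{(x,y)}\not\subseteq G$, the $\tilde G$-orbit of $(x,y)$ is a single $G$-orbit, and this $G$-orbit is the desired $S\in O_{i_0,i}$ with $\sigma(S)=S$. I expect the only genuinely delicate point to be the translation in the first step — verifying that $\sigma$-invariance of a $G$-orbit is equivalent to non-splitting of the ambient $\tilde G$-orbit, and that this is detected by a $\tilde G$-stabilizer not lying in $G$ — since everything afterwards is a one-line parity argument. It is worth noting that the quadratic hypothesis $[\tilde G:G]=2$ is essential: it is precisely what makes a split orbit break into \emph{two equal} halves, and it is the evenness of that splitting which the oddness of $|J_{i_0}|$ contradicts.
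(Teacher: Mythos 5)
Your proof is correct, but it takes a genuinely different route from the paper's. The paper disposes of the lemma by a three-line global count on $J_{i_0}\times J_i$: since $\Gamma_{k'}$ acts transitively on $J_i$, every orbit $S\in O_{i_0,i}$ surjects onto $J_i$ with fibres of constant size, so $|J_i|$ divides $|S|$; consequently each pair $\{S,\sigma(S)\}$ with $\sigma(S)\neq S$ covers a set of cardinality divisible by $2\,|J_i|$, and if no orbit were $\sigma$-fixed then $|J_{i_0}|\cdot|J_i|$ would be divisible by $2\,|J_i|$, contradicting the oddness of $|J_{i_0}|$. You instead localize at a chosen $y\in J_i$ and argue with stabilizers: the hypothesis that $a$ is not a square in $k[t]/P_i(t)$ gives $[\tilde G_y:G_y]=2$, the oddness of $|J_{i_0}|$ gives an odd $\tilde G_y$-orbit $O\subset J_{i_0}$ which therefore cannot split into two equal $G_y$-halves, and this yields an explicit $\tau\in\Gamma_k\setminus\Gamma_{k'}$ fixing a pair $(x,y)$; your translation step ($\sigma$-invariance of a $\Gamma_{k'}$-orbit $\Leftrightarrow$ non-splitting of the ambient $\Gamma_k$-orbit $\Leftrightarrow$ a point stabilizer not contained in $\Gamma_{k'}$) is the standard orbit--stabilizer computation for a normal subgroup of index $2$, and you carry it out correctly. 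Note that the two arguments consume the same input in equivalent guises: given that $\Gamma_k$ is transitive on $J_i$, transitivity of $\Gamma_{k'}$ on $J_i$ (the paper's ingredient) is precisely the statement $\tilde G_y\not\subseteq G$ (yours). As for what each buys: the paper's counting scheme is shorter and is reused verbatim in Lemma \ref{orbiteven}, where one needs not merely the existence of a $\sigma$-fixed orbit but control of its size ($|S|=s\cdot|J_i|/2$ with $s$ odd), a refinement that falls out of the divisibility bookkeeping but not of your fixed-point argument without further work; conversely, your version produces strictly more explicit information here, namely a $\sigma$-fixed orbit through any prescribed $y\in J_i$ and the transparent field-theoretic criterion that $\sigma(S)=S$ exactly when $\sqrt a\notin k(\alpha,\beta)$ for the corresponding roots, which the counting proof never exhibits.
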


\begin{proof}
Puisque l'action de $\Gamma_{k'}$ sur $J_i$ est transitive, pour chaque orbite $S\in O_{i_0,i}$, $|S|$ est divisible par $|J_i|$.
Donc si $S\neq \sigma(S)$, $|S\cup \sigma(S)|$ est divisible par $2\cdot |J_i|$.
Puisque $|J_{i_0}\times J_i|=|J_{i_0}|\cdot |J_i|$ et $|J_{i_0}|$ est impair, il existe au moins une orbite $S\in O_{i_0,i}$ telle que $\sigma(S)=S$.
\qed
\end{proof}

\begin{lem}\label{orbiteven}
Pour chaque $i$ et chaque orbite $S\in O_{i,i}$, on a $2\cdot |S|$ est divisible par $|J_i|$.
De plus, si $|J_i|$ est pair, alors il existe au moins une orbite $S\in O_{i,i}$, telle que $\sigma(S)=S$ et $|S|=s\cdot |J_i|/2$, o\`u $s$ est un entier impair.
\end{lem}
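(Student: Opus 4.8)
Notons $n := |J_i|$ et $G := \Gamma_{k'}$, qui agit transitivement sur $J_i$ puisque $P_i$ reste irréductible sur $k'$. Le plan est d'établir d'abord la divisibilité, puis d'en déduire l'existence par un argument de parité élémentaire.

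Pour la première assertion, je travaillerai avec l'application quotient $q : J_i\times J_i \to (J_i\times J_i)/\sim$ et la projection $p_1 : J_i\times J_i \to J_i$ sur le premier facteur, toutes deux $G$-équivariantes. Pour toute $G$-orbite $T\subset J_i\times J_i$, l'ensemble $p_1(T)$ est $G$-stable et non vide, donc égal à $J_i$ par transitivité ; comme $G$ permute transitivement les fibres de $p_1|_T$, celles-ci ont même cardinal et $|T|$ est divisible par $n$. Fixons $S\in O_{i,i}$ : son image réciproque $q^{-1}(S)$ est $G$-stable, donc réunion disjointe de telles orbites $T$, et $|q^{-1}(S)|$ est divisible par $n$. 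Si $S=\Delta_i$, on a directement $|S|=n$, donc $2|S|=2n$ ; sinon $q$ identifie deux à deux les couples $(a,b)$ et $(b,a)$ au-dessus de $S$, d'où $|q^{-1}(S)|=2|S|$. Dans tous les cas $2|S|$ est divisible par $n=|J_i|$.

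Supposons maintenant $n$ pair. La première partie entraîne que $c_S := 2|S|/n$ est un entier $\geq 1$, soit $|S|=c_S\cdot n/2$. L'idée clé est de comparer la somme $\sum_S c_S$ à sa partie $\sigma$-invariante. Comme $(J_i\times J_i)/\sim$ a exactement $n(n+1)/2$ éléments, on obtient $\sum_{S\in O_{i,i}} c_S = n+1$. L'involution $\sigma$ agit sur $O_{i,i}$ en préservant les cardinaux, donc les orbites non fixées se répartissent en paires $\{S,\sigma(S)\}$ avec $c_S=c_{\sigma(S)}$, de contribution paire à la somme. Il vient $\sum_{\sigma(S)=S} c_S \equiv n+1\equiv 1\pmod 2$, ce qui impose l'existence d'au moins une orbite $S$ avec $\sigma(S)=S$ et $c_S$ impair ; en posant $s:=c_S$, on a $|S|=s\cdot|J_i|/2$ avec $s$ impair.

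Le point délicat ne sera pas le dénombrement mais la mise en place soigneuse de la première partie : il faut vérifier que $q^{-1}(S)$ est bien une réunion de $G$-orbites de $J_i\times J_i$ (et non du quotient), et que $q$ y est exactement deux à un en dehors de la diagonale, ce qui oblige à isoler le cas $S=\Delta_i$ (où $c_{\Delta_i}=2$ est pair, de sorte que l'orbite cherchée est nécessairement non diagonale). Une fois cette divisibilité acquise, l'argument de parité — reposant sur le fait que $n+1$ est impair et que la part $\sigma$-appariée contribue un nombre pair — conclut immédiatement, sans qu'aucun calcul supplémentaire ne soit requis.
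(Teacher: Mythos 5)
Your proof is correct, and its skeleton --- first the divisibility $|J_i| \mid 2|S|$, then a parity count over $\sigma$-paired orbits --- is the same as the paper's; the difference lies in how the divisibility is established. The paper fixes a representative $(j_i,j'_i)$ of $S$ and computes the stabilizer of the unordered pair inside $\Gamma_{k'}$ by a case analysis: if some $g\in\Gamma_{k'}$ swaps $j_i$ and $j'_i$, the pair-stabilizer is $(St_j\cap g\cdot St_j\cdot g^{-1})\cup g\cdot(St_j\cap g\cdot St_j\cdot g^{-1})$, which contains the ordered-pair stabilizer with index $2$; otherwise it is a subgroup of $St_j$. You avoid this coset manipulation by lifting to ordered pairs: every $\Gamma_{k'}$-orbit $T\subset J_i\times J_i$ has $|T|$ divisible by $|J_i|$ (your fiber-counting argument is orbit-stabilizer in disguise), and $q^{-1}(S)$ is a $\Gamma_{k'}$-stable union of such orbits, of cardinality $2|S|$ when $S\neq\Delta_i$ and $|J_i|$ when $S=\Delta_i$; this unifies the paper's two cases at the cost of isolating the diagonal, which you do correctly (every class in an orbit $S\neq\Delta_i$ is indeed off-diagonal, since otherwise $S$ would meet, hence equal, $\Delta_i$). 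The parity step is identical in substance to the paper's, which simply invokes the argument of Lemma \ref{orbitodd}; your explicit bookkeeping with $c_S=2|S|/|J_i|$ makes it precise. Note also that you use the correct cardinality $|J_i\times J_i/\!\sim| = |J_i|(|J_i|+1)/2$, whereas the paper writes $|J_i|(|J_i|-1)/2$; this is a harmless slip on the paper's side, since both totals are odd when $|J_i|$ is even, which is all the parity argument requires.
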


\begin{proof}
Soit $(j_i,j'_i)$ un \'el\'ement de l'orbite $S\in O_{i,i}$. Pour l'action de $\Gamma_{k'}$, soit $St_j$ (resp. $St_{j,j'}$)
le stabilisateur de $j_i$ (resp. de $(j_i,j'_{i})$). Ainsi s'il y a $g\in \Gamma_{k'}$ tel que $g(j_i)=j'_i$ et $g(j'_i)=j_i$, alors
$$St_{j,j'}=(St_j\cap (g\cdot St_j\cdot g^{-1}))\cup ((g\cdot St_j)\cap (g\cdot (g\cdot St_j\cdot g^{-1})))
=(St_j\cap (g\cdot St_j\cdot g^{-1}))\cup (g\cdot (St_j\cap(g\cdot St_j\cdot g^{-1})));$$
sinon, $St_{j,j'}$ est un sous-groupe de $St_{j}$. Ainsi $2\cdot [\Gamma_{k'}:St_{j,j'}]$ est divisible par $[\Gamma_{k'}:St_j]$, et donc $2\cdot |S|$ est divisible par $|J_i|$.

Supposons que $|J_i|$ est pair. On a $|J_i\times J_i/\sim|=|J_i||J_i-1|/2$. 
D'apr\`es l'argument du Lemme \ref{orbitodd}, il existe au moins une orbite $S\in O_{i,i}$, telle que $\sigma(S)=S$ et $|S|=s\cdot |J_i|/2$, o\`u $s$ est un entier impair.
\qed
\end{proof}

\begin{proof}[D\'emonstration de la Proposition \ref{conicsym}]
Soient $F$ le diviseur de $X_{\bk}$ d\'efini par $t=\infty$, $G$ le diviseur de $X_{\bk}$ d\'efini
par $(u,y-\sqrt{a}z)$, $G'$ le diviseur de $X_{\bk}$ d\'efini par $(u,y+\sqrt{a}z)$, $D_{j_i}$ le diviseur de $X_{\bk}$ d\'efini par
$(t-e_{j_i}, y-\sqrt{a}z)$ et $D'_{j_i}$ le diviseur de $X_{\bk}$ d\'efini par $(t-e_{j_i}, y+\sqrt{a}z)$, pour chaque $i\in I$ et chaque $j_i\in J_i$.
Pour chaque $i\in I$, on note $D_i:=\sum_{j_i\in J_i}D_{j_i}$
et $D'_i:=\sum_{j_i\in J_i}D'_{j_i}$.
Pour chaque orbite $S\in O_{i,i'}$,
on note 
$$D_S:=\sum_{(j_i,j_{i'})\in S}D_{j_i}\symprod D_{j_{i'}}\in \Sym^2\Pic(X_{\bk})\ \ \  \text{et}
\ \ \  D'_S:=\sum_{(j_i,j_{i'})\in S}D'_{j_i}\symprod D'_{j_{i'}}\in \Sym^2\Pic(X_{\bk}).$$

 D'apr\`es la d\'emonstration de \cite[Prop. 5.1]{CTCS} (cf. \cite[Section 7]{CTSSD} pour plus de d\'etails dans le cas o\`u $\deg(P)=4$), $T^*=\Pic(X_{\bk})$ est engendr\'e par $(D_{j_i})_{j_i\in \cup_iJ_i}$, $(D'_{j_i})_{j_i\in \cup_iJ_i}$, $F$, $G$ et $G'$,
 et les relations sont: $G'-G=\Sigma_{i}\Sigma_{j_i}D_{j_i}-\frac{\sum_i|J_i|}{2}F$ et $D_{j_i}+D'_{j_i}=F$ pour chaque $j_i$. 
 Ainsi $\Pic(X_{\bk})$ est un $\BZ$-module libre de base $(D_{j_i})_{j_i\in \cup_iJ_i}$, $F$ et $G$, et l'action de $\Gamma_{k'}$ sur cette base est de permutation. Donc l'action de $\Gamma_{k'}$ sur $\Sym^2T^*$ est aussi de  permutation. Ainsi $H^1(k',\Sym^2T^*)=0$. 
 Par la suite spectrale de Hochschild-Serre, on a:
 $$ H^1(\Gal(k'/k), (\Sym^2T^*)^{\Gamma_{k'}})\cong H^1(k,\Sym^2T^*).$$
 Ainsi il suffit de montrer que $H^1(\Gal(k'/k), (\Sym^2T^*)^{\Gamma_{k'}})=0$.

\bigskip

L'id\'ee de la d\'emonstration de l'\'egalit\'e $H^1(\Gal(k'/k), (\Sym^2T^*)^{\Gamma_{k'}})=0$ 
est de trouver une filtration $\Gal(k'/k)$-invariante de $(\Sym^2T^*)^{\Gamma_{k'}}$ telle que la cohomologie des quotients successifs est $0$.
En fait, on trouve des sous-groupes $A_l\sbt (\Sym^2T^*)^{\Gamma_{k'}}$, $l=1,\dots,m$,
 tels que $(\Sym^2T^*)^{\Gamma_{k'}}\iso \bigoplus_lA_l$, le sous-groupe $\bigoplus_{l=1}^{l_0}A_l\sbt (\Sym^2T^*)^{\Gamma_{k'}}$ soit $\Gal(k'/k)$-invariant,
et $H^1(k,A'_{l_0})=0$ pour tous les $l_0=1,\dots ,m$, o\`u $A'_{l_0}:=\bigoplus_{l=1}^{l_0}A_l/\bigoplus_{l=1}^{l_0-1}A_l$.

Puisque $\Pic(X_{\bk})$ est un $\BZ$-module libre de base $(D_{j_i})_{j_i\in \cup_iJ_i}$, $F$ et $G$, 
le $\BZ$-module $(\Sym^2T^*)^{\Gamma_{k'}}$ est libre de base
$F\symprod F$, $F\symprod G$, $G\symprod G$, $(F\symprod D_i)_{i\in I}$, $(G\symprod D_i)_{i\in I}$ et
$(D_S)_{S\in \cup_{i,i'\in I} O_{i,i'}}$.

D'apr\`es le Lemme \ref{orbiteven}, pour chaque $i$ avec $|J_i|$ pair, on choisit une orbite $S^0_{i}\in O_{i,i}$ telle que $\sigma(S^0_i)=S^0_i$ et
$|S^0_i|=s_i\cdot |J_i|/2$, o\`u $s_i$ est un entier impair. S'il existe au moins un $i_0$ tel que $|J_{i_0}|$ est impair,
on choisit une orbite $S^1_{i}\in O_{i_0,i}$ telle que $\sigma(S^1_i)=S^1_i$ pour chaque $i\neq i_0$ avec $|J_i|$ impair.

Dans la base de $(\Sym^2T^*)^{\Gamma_{k'}}$, on peut remplacer $D_{S^1_i}$ par $D_{i_0}\symprod D_i$ (si $i_0$ existe), pour tous les $i\neq i_0$ avec $|J_i|$ impair. On d\'efinit $(A_l)_{l=1}^6$:

$A_1$ est le $\BZ$-module libre de base $F\symprod F$, $(F\symprod D_i)_{i\in I,\ |J_i|\ impair}$ et $(D_{i_0}\symprod D_i)_{i\neq i_0,\ |J_i|\ impair}$ (si $i_0$ existe).

$A_2$ est le $\BZ$-module libre de base $(F\symprod D_i)_{i\in I,\ |J_i|\ pair}$ et $(D_{S^0_i})_{i\in I,\ |J_i|\ pair}$.

$A_3$ est le $\BZ$-module libre de base $G\symprod F$.

$A_4$ est le $\BZ$-module libre de base $(D_S)_{S\in O_0}$, o\`u
$$O_0:= \bigcup_{i,i'\in I}O_{i,i'}\setminus \{(\Delta_i)_i, (S^0_i)_{|J_i|\ pair}, (S^1_i)_{i\neq i_0,\ |J_i|\ impair}\}.$$

$A_5$ est le $\BZ$-module libre de base $(G\symprod D_i)_{i\in I}$ et $(D_{\Delta_i})_{i\in I}$.

$A_6$ est le $\BZ$-module libre de base $G\symprod G$.

On a $(\Sym^2T^*)^{\Gamma_{k'}}\iso \bigoplus_{l=1}^6A_l$. Puisque $G'-G=\Sigma_{i}\Sigma_{j_i}D_{j_i}-\frac{\sum_i|J_i|}{2}F$,
$D_{j_i}+D'_{j_i}=F$, $\sigma(S^0_i)=S^0_i$, $\sigma(S^1_i)=S^1_i$, $\sigma(\Delta_i)=\Delta_i$ et $O_0$ est $\sigma$-invariant,
on a que $\bigoplus_{l=1}^{l_0}A_l\sbt (\Sym^2T^*)^{\Gamma_{k'}}$ est $\Gal(k'/k)$-invariant pour chaque $l_0=1,\dots, 6$.

Pour tous les $1\leq l\leq 6$, notons $A'_{l_0}:=\bigoplus_{l=1}^{l_0}A_l/\bigoplus_{l=1}^{l_0-1}A_l.$
Alors $A'_{l_0}$ est un $\Gal(k'/k)$-module et
\begin{equation}\label{eq5.1}
A_{l_0}\hookrightarrow \bigoplus_{l=1}^{l_0}A_l \to A'_{l_0}
\end{equation}
 est un isomorphisme de groupes ab\'eliens.
 
\medskip

Il suffit de montrer que $H^1(\Gal(k'/k),A'_{l_0})=0$ pour tous les $1\leq l\leq 6$.

Pour $l_0=1$, si $i_0$ n'existe pas, on a $A_1=\BZ \cdot (F\symprod F)$ et $H^1(\Gal(k'/k),A_1)=0$.

Supposons que $i_0$ existe. Soit $B_1$ le groupe ab\'elien
engendr\'e librement par $F\symprod D_{i_0}$ et $F\symprod D'_{i_0}$. Ainsi $H^1(\Gal(k'/k),B_1)=0$. Soit $B'_1$ le sous-groupe de $A_1$ engendr\'e librement
par $|J_{i_0}|F\symprod F$, $F\symprod D_{i_0}$, $|J_{i_0}|F\symprod D_i$ pour tous les $i\neq i_0$ avec $|J_i|$ impair et $D_{i_0}\symprod D_i$ pour tous les
$i\neq i_0$ avec $|J_i|$ impair. Ainsi $|A_1/B'_1|=|J_{i_0}|^{M}$, o\`u $M$ est le nombre de $i\in I$ avec $J_i$ impair. Donc $|A_1/B'_1|$ est impair.
Puisque $F\symprod D'_{i_0}=|J_{i_0}|F\symprod F-F\symprod D_{i_0}$, $B_1$ est un sous-groupe de $B'_1$.

Notons $C_1:=B'_1/B_1$. Ainsi $C_1$ est engendr\'e librement par les classes de $|J_{i_0}|F\symprod D_i$ et de $D_{i_0}\symprod D_i$ pour tous les $i\neq i_0$ avec $|J_i|$ impair. On a
$$\sigma(D_{i_0}\symprod D_i)=D'_{i_0}\symprod D'_i=|J_{i}|F\symprod D'_{i_0}-|J_{i_0}|F\symprod D_i+D_{i_0}\symprod D_i.$$
Ainsi $C_1$ est engendr\'e librement par $D_{i_0}\symprod D_i$ et $\sigma(D_{i_0}\symprod D_i)$ pour tous les $i\neq i_0$ avec $|J_i|$ impair.
Donc $C_1$ est $\Gal(k'/k)$ invariant, et $H^1(\Gal(k'/k),C_1)=0$.

Puisque $H^1(\Gal(k'/k),B_1)=0$, on a $B'_1$ est $\Gal(k'/k)$ invariant, et $H^1(\Gal(k'/k),B'_1)=0$.
Donc $A_1/B'_1$ est $\Gal(k'/k)$ invariant, et $H^1(\Gal(k'/k),A_1/B'_1)=0$, parce que $|A_1/B'_1|$ est impair.
Donc $A_1$ est $\Gal(k'/k)$ invariant, et $H^1(\Gal(k'/k),A_1)=0$.

Pour \ $l_0=2$, \ soit \ $B_2$ \ un \ $\BZ$-module \ libre \ de \ base \ $(\sigma(D_{S^0_i}))_{i\in I,\ |J_i|\ pair}$ \ et \ $(D_{S^0_i})_{i\in I,\ |J_i|\ pair}$. \ On \ a \ $H^1(\Gal(k'/k), B_2)=0$.  On a
$$\sigma(D_{S^0_i})-D_{S^0_i}=\sum_{(j_i,j_{i'})\in S_i^0}((F-D_{j_i})\symprod (F-D_{j_{i'}})-D_{j_i}\symprod D_{j_{i'}})
=|S_i^0|^2F\symprod F-\sum_{(j_i,j_{i'})\in S_i^0}(D_{j_i}+ D_{j_{i'}})\symprod  F.$$
Puisque $\sigma(D_{S^0_i})-D_{S^0_i}$ est $\Gamma_{k'}$-invariant, $\sum_{(j_i,j_{i'})\in S_i^0}(D_{j_i}+ D_{j_{i'}})$ est $\Gamma_{k'}$-invariant et donc
$$\sigma(D_{S^0_i})-D_{S^0_i}=|S_i^0|^2F\symprod F-s_i F\symprod D_i.$$
Alors \ $\Im(B_2\sbt (A_1\oplus A_2)\to A'_2)$ \ est \ le \ sous-groupe \ engendr\'e \ par \ l'\'el\'ement \ $(s_i F\symprod D_i)_{i\in I,\ |J_i|\ pair}$ \ et $(D_{S^0_i})_{i\in I,\ |J_i|\ pair}$.
D'apr\`es \eqref{eq5.1}, la composition $B_2\sbt (A_1\oplus A_2)\to A'_2$ est injective, et $|A'_2/B_2|=\prod_{i\in I,\ |J_i|\ pair}s_i$. Ainsi $|A'_2/B_2|$ est impair,
et donc $H^1(\Gal(k'/k),A'_2/B_2)=0$. Ainsi $H^1(\Gal(k'/k),A'_2)=0$.

Pour $l_0=3$, on a
$$\sigma(F\symprod G)=F\symprod G'=F\symprod G+ \sum_iF\symprod D_i-\frac{\sum_i|J_i|}{2}F\symprod F.$$
Donc $(\sigma(F\symprod G)-F\symprod G)\in A_1\oplus A_2$. Alors l'action de $\Gal(k'/k)$ sur $A'_3$ est triviale, et donc $H^1(\Gal(k'/k),A'_3)=0$.

Soit $(F\symprod E)\in (\Sym^2T^*)^{\Gamma_{k'}}$, o\`u $E\in T^*$. Alors $E\in (T^*)^{\Gamma_{k'}}$. Ainsi $E$ est une combinaison de $G$, $F$ et $(D_i)_{i\in I}$, alors $(F\symprod E)\in A_1\oplus A_2\oplus A_3$.

Pour $l_0=4$, on a $(D'_{j_i}\symprod D'_{j_{i'}})=D_{j_i}\symprod D_{j_{i'}}+F\symprod (F-D_{j_i}-D_{j_{i'}})$ et donc pour chaque orbite $S$,
$(\sigma (D_S)-D_{\sigma(S)})\in A_1\oplus A_2\oplus A_3$. Ainsi dans $(\oplus_{l=1}^4A_l)/(\oplus_{l=1}^3A_l)$, on a $\sigma (D_S)=D_{\sigma(S)}$.
Donc l'action de $\Gal(k'/k)$ sur $A'_4$ est une permutation, et $H^1(\Gal(k'/k),A'_4)=0$.

Pour $l_0=5$, en utilisant $G'-G=\Sigma_{i}\Sigma_{j_i}D_{j_i}-\frac{\sum_i|J_i|}{2}F$, on trouve
$$(G'\symprod D_i-D_{\Delta_i}-G\symprod D_i)\in A_1\oplus A_2\oplus A_4.$$
Ainsi $A'_5$ est engendr\'e librement par $G'\symprod D_i$ et $G\symprod D_i$ pour tous les $i\in I$.
On a 
$$\sigma(G'\symprod D_i)=G\symprod D'_i=-G\symprod D_i+ |J_i|G\symprod F.$$ 
Donc dans $A'_5$, on a $\sigma(G'\symprod D_i)=-G\symprod D_i$. Donc $H^1(\Gal(k'/k),A'_5)=0$.

Pour $l_0=6$, en utilisant $G'-G=\Sigma_{i}\Sigma_{j_i}D_{j_i}-\frac{\sum_i|J_i|}{2}F$, on a
$$(\sigma(G\symprod G)-G\symprod G)\in \bigoplus_{l=1}^5A_l.$$
Donc dans $A'_6$, $\sigma(G\symprod G)=G\symprod G$, et $H^1(\Gal(k'/k),A'_6)=0$.
\qed
\end{proof}

\section{Surfaces de del Pezzo} 

Dans cette section, soient  $X$ une surface de del Pezzo sur $k$ et  $\CT$ un torseur universel de $X$. 
On cherche \`a conr\^oler  l'exposant de torsion du quotient $\frac{H^3_{\nr}(\CT,\BQ/\BZ(2))}{H^3(k,\BQ/\BZ(2))}$.

\medskip

Une surface projective, lisse, g\'eom\'etriquement connexe $X$ est appel\'ee
 \emph{surface de del Pezzo} si le faisceau anticanonique
 $-K_X$ est ample. Le degr\'e d'une telle surface $X$ est  $deg(X):=(K_X,K_X)$. 
Par \cite[Exercise 3.9]{K2}, $X$ est alors g\'eom\'etriquement rationnelle, on a $1\leq deg(X)\leq 9$ et $Pic(X_{\bk})\iso \BZ^{10-deg(X)}$.

Soit $X$ une surface de del Pezzo de degr\'e $d\leq 6$. 
Soit $r=9-d$.
Rappelons des r\'esultats dans \cite[Chapter 4]{Ma}.
On sait  (\cite[Prop. 25.1]{Ma}) 
qu'il existe une base $(l_i)_{i=0}^r$ de $\Pic(X_{\bk})$ telle que 
\begin{equation}\label{basedePic}
(l_0,l_0)=1,\ \ \  (l_i,l_i)=-1\  \text{si}\  i\neq 0,\ \ (l_i,l_j)=0\ \text{si}\ i\neq j\ \ \ \text{et}\ \ \   
\omega=-3l_0+\sum_{i=1}^rl_i,
\end{equation}
o\`u $\omega:=[K_X]\in \Pic(X_{\bk})$.
Soit  $\mathfrak{S}_r$ le groupe sym\'etrique d'indice $r$. 
L'action de $\mathfrak{S}_r$ sur $\{l_i\}_{i=0}^r $, qui  pr\'eserve $l_0$ et permute $(l_i)_{i=1}^r$, induit une action de permutation de $\mathfrak{S}_r$ sur $\Pic(X_{\bk})$.
Pour $r=3,4,5,6,7,8$, soient $R_r$ le syst\`eme de racines de type $A_1\times A_2,A_4,D_5,E_6,E_7,E_8$ respectivement.
Par  \cite[Thm. 23.9]{Ma}, on a:
\begin{itemize}
\item[\rm (1)] le groupe de Weyl $W(R_r)$ agit sur $\Pic(X_{\bk})$ et cette action pr\'eserve $\omega$ et l'intersection;
\item[\rm (2)] l'action de $\Gamma_k$ sur $\Pic(X_{\bk})$, qui pr\'eserve $\omega$ et l'intersection, se factorise par  le groupe de Weyl $W(R_r)$;
\item[\rm (3)] l'action de $\mathfrak{S}_r$ sur $\Pic(X_{\bk})$ ci-dessus, qui pr\'eserve $\omega$ et l'intersection aussi, induit une inclusion canonique $\mathfrak{S}_r\sbt W(R_r)$ ne d\'ependant que du choix de la base $(l_i)_{i=0}^r$.
\end{itemize}

\begin{prop}\label{delpezzo5}
Soient \ $X$ \ une \ surface \ de \ del \ Pezzo \ de \ degr\'e \ $\geq 5$ \ et \ $\CT$ \ un \ torseur \ universel \ de \ $X$. \ Alors $\frac{H^3_{\nr}(\CT,\BQ/\BZ(2))}{H^3(k,\BQ/\BZ(2))}=0$.
\end{prop}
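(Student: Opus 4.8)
The plan is to deduce the vanishing from Théorème \ref{H4Z2}(3): once we know that $X(k)\neq\emptyset$ and that $H^1(k,\Sym^2\Pic(X_{\bk}))=0$, the conclusion $\frac{H^3_{\nr}(\CT,\BQ/\BZ(2))}{H^3(k,\BQ/\BZ(2))}=0$ is immediate. The rational point is harmless here: a del Pezzo surface of degree $5$ always carries a $k$-point, while in every degree a $k$-point of $\CT$ maps to one of $X$, so the standing assumption on $\CT$ provides $X(k)\neq\emptyset$. Thus the entire content is the assertion
\[
H^1(k,\Sym^2\Pic(X_{\bk}))=0.
\]
Since the $\Gamma_k$-action on $\Pic(X_{\bk})$ factors through the finite Weyl group $W(R_r)$ with $r=9-\deg X\leq 4$, an argument d'inflation–restriction reduces this to proving that $\Sym^2\Pic(X_{\bk})$ is \emph{coflasque} as a $W(R_r)$-lattice, i.e. that $H^1(H,\Sym^2\Pic(X_{\bk}))=0$ for every sous-groupe $H\leq W(R_r)$.

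For $\deg X\geq 7$, so $r\leq 2$, this is straightforward: from the base \eqref{basedePic} and the classes des $(-1)$-courbes one checks directly that $\Pic(X_{\bk})$ is a permutation $W(R_r)$-module, whence $\Sym^2\Pic(X_{\bk})$ is again a permutation module (the symmetric products of a permuted basis are permuted), and permutation lattices have $H^1$ nul sur tout sous-groupe. The cases $\deg X\in\{5,6\}$ are the real point, and I would treat them by a restriction–corestriction device split according to the primes dividing $|W(R_r)|$. Take first $\deg X=5$, so $W(R_4)=W(A_4)=\mathfrak{S}_5$. Let $L\subset\Pic(X_{\bk})$ be the permutation sous-réseau $L=\bigoplus_{i=1}^{5}\BZ f_i$ spanned by the five classes de fibre $f_i$ des cinq structures de fibré en coniques, on which $\mathfrak{S}_5$ acts by the natural permutation of $\{1,\dots,5\}$. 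A direct computation with \eqref{basedePic} gives $\sum_i f_i=-2\omega$, so that $\omega=-\tfrac12\sum_i f_i$ and the index $[\Sym^2\Pic(X_{\bk}):\Sym^2 L]$ is a power of $2$. As $\Sym^2 L$ is a permutation module, the suite exacte
\[
0\to \Sym^2 L\to \Sym^2\Pic(X_{\bk})\to Q\to 0,\qquad Q \text{ un } 2\text{-groupe fini},
\]
together with $H^1(H,\Sym^2 L)=0$ shows that $H^1(H,\Sym^2\Pic(X_{\bk}))$ is $2$-primaire for every $H\leq\mathfrak{S}_5$.

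It then suffices to kill this $2$-primary part, and here is the crucial observation: every $2$-sous-groupe of $\mathfrak{S}_5$ is conjugué into the stabilisateur de point $\mathfrak{S}_4$ (les $2$-Sylow de $\mathfrak{S}_5$ et de $\mathfrak{S}_4$ sont tous deux d'ordre $8$), and $\Pic(X_{\bk})$ restreint à $\mathfrak{S}_4$ is a permutation module, since $\{f_1,f_2,f_3,f_4,\omega\}$ is a $\BZ$-base of $\Pic(X_{\bk})$ that $\mathfrak{S}_4$ permute (it permutes $f_1,\dots,f_4$ and fixes $\omega$). Hence $\Sym^2\Pic(X_{\bk})|_{H_2}$ is a permutation module and $H^1(H_2,\Sym^2\Pic(X_{\bk}))=0$ for every $2$-sous-groupe $H_2$. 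For an arbitrary $H$, restriction to a $2$-Sylow $H_2\leq H$ is injective on the $2$-primary part, because $\mathrm{cores}\circ\mathrm{res}$ is multiplication par l'entier impair $[H:H_2]$; as the target vanishes we obtain $H^1(H,\Sym^2\Pic(X_{\bk}))=0$, which is exactly coflasqueness. For $\deg X=6$, where $W(A_1\times A_2)=\mathfrak{S}_2\times\mathfrak{S}_3$, the same scheme applies prime par prime ($p=2,3$): l'hexagone des six droites furnishes the permutation sous-réseau, $\Pic(X_{\bk})$ restreint au $3$-Sylow est visiblement de permutation, and the vanishing over a $2$-Sylow — where the involution de Cremona intervenes — is checked directly, or read off from the toric description of $X$.

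The step I expect to be the main obstacle is precisely this integral bookkeeping in degrees $5$ et $6$: one must produce the permutation sous-réseau $L$, compute the index $[\Sym^2\Pic(X_{\bk}):\Sym^2 L]$ in order to see that only the primes dividing $|W(R_r)|$ can occur, and then verify that each relevant $p$-Sylow restricts $\Pic(X_{\bk})$ to a permutation (or at least $H^1$-trivial) module. This is exactly where the hypothèse $\deg X\geq 5$ is indispensable: for $\deg X\leq 4$ the groupes de Weyl $W(D_5),W(E_6),W(E_7),W(E_8)$ are too large for their $2$-Sylow to lie in stabilisateurs de point over which $\Pic(X_{\bk})$ is a permutation module, $\Sym^2\Pic(X_{\bk})$ ceases to be coflasque at the prime $2$, and one recovers only the conclusion $2$-primaire de Théorème \ref{sauf2prime}.
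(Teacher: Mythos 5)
The decisive flaw is your treatment of the rational point. Proposition \ref{delpezzo5} assumes only that $X$ is a del Pezzo surface of degree $\geq 5$ and that $\CT$ is a universal torsor over $X$: there is \emph{no} standing hypothesis, in Section 6 or in the statement, that $\CT$ or $X$ has a $k$-point (the paper writes the hypothesis $X(k)\neq\emptyset$ explicitly whenever it is needed, e.g.\ in Th\'eor\`eme \ref{H4Z2}(3) and Th\'eor\`eme \ref{coniccoh}, and it does not appear here). So you cannot reduce to Th\'eor\`eme \ref{H4Z2}(3) the way you do: for $d=6,8,9$ a del Pezzo surface may well have $X(k)=\emptyset$ (a nontrivial Severi--Brauer surface for $d=9$, a product of two pointless conics for $d=8$), and your argument then gives nothing. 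One might hope to show that the mere existence of a universal torsor forces $X(k)\neq\emptyset$ for these surfaces (via the elementary obstruction), but that is a separate, nontrivial assertion which you neither state nor prove, and it is not the route the paper takes. The paper instead proves $H^1(k,\Sym^2\Pic(X_{\bk}))=0$ and then observes, from Th\'eor\`eme \ref{H4Z2}(2) \emph{and its proof}, that it suffices to have either $X(k)\neq\emptyset$ \emph{or} the surjectivity of the intersection map $(\Sym^2T^*)^{\Gamma_k}\xrightarrow{\cup}\CCH^2(X_{\bk})\cong\BZ$: indeed, surjectivity of $\cup$ forces $H^1(k,H^1(\CT_{\bk},\CK_2))\cong H^1(k,\Sym^2\Pic(X_{\bk}))=0$, hence $\BH^4(\CT,\BZ(2))/\BH^4(k,\BZ(2))=0$, with no rational point needed. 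It then checks, degree by degree: $d=5,7$: $X(k)\neq\emptyset$ by the classical theorems of Enriques, Ch\^atelet, Manin, Swinnerton-Dyer; $d=9$: $\cup(l_0\otimes l_0)=1$; $d=8$: either $X(k)\neq\emptyset$ or an explicit invariant class ($D_1\otimes D_2$ in the Weil-restriction case) has $\cup=1$; $d=6$: the invariant classes $L_1,L_2$ satisfy $\cup(L_1)=3$, $\cup(L_2)=-2$. Your proposal is missing this entire second branch, and without it the statement as given is not proved.

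On the cohomological half, your route to $H^1(k,\Sym^2\Pic(X_{\bk}))=0$ is genuinely different from the paper's and is essentially viable: the paper simply quotes that $\Pic(X_{\bk})$ is stably a permutation $\Gamma_k$-module for $d\geq 5$ and concludes via the decomposition (\ref{symoplus}), whereas you prove coflasqueness of $\Sym^2\Pic(X_{\bk})$ by hand. Your $d=5$ computation is correct: the lattice $L=\oplus_i\BZ f_i$ spanned by the five conic classes has index $2$ in $\Pic(X_{\bk})$, so $[\Sym^2\Pic(X_{\bk}):\Sym^2L]$ is a power of $2$, and the Sylow reduction to $\mathfrak{S}_4\subset\mathfrak{S}_5$, where $\{f_1,\dots,f_4,\omega\}$ is a permuted basis, kills the $2$-primary part. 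But the $d=6$ case is only waved at (``checked directly, or read off from the toric description''), so even this half would need completing. In any case, repairing the $H^1$ computation does not close the main gap described above.
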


\begin{proof}
Par un r\'esultat classique (cf. \cite[Lem. 3.2]{C2}), le $\Gamma_k$-module $\Pic(X_{\bar{k}})$ est stablemement de permutation. D'apr\`es (\ref{symoplus}), on a $H^1(k,\Sym^2\Pic(X_{\bk}))=0$.
Par le th\'eor\`eme \ref{H4Z2} et sa preuve, il suffit de montrer que l'on a $X(k)\neq \emptyset$  ou  que le morphisme
$(\Sym^2T^*)^{\Gamma_k}\xrightarrow{\cup}\CCH^2(X_{\bk})\cong \BZ$
 dans le th\'eor\`eme \ref{H4Z2} (2)  est surjectif. 

Notons $d$ le degr\'e de $X$.

Si $d=5$ ou $7$, par les travaux de Enriques, Ch\^atelet,  Manin, Swinnerton-Dyer (voir \cite[Section 4]{CT99} ou \cite[Th\'eor\`eme 2.1]{VA}), on a $X(k)\neq\emptyset$.

Si $d=9$, on a $\Pic(X_{\bk})^{\Gamma_k}=\Pic(X_{\bk})$. 
Puisque $X_{\bk}\cong \BP^2_{\bk}$, l'homomorphisme $\cup$ est surjectif.

Si $d=8$, on a  l'une des possibilit\'es suivantes (voir par exemple \cite[Exemples 3.1.3 et 3.1.4]{AB}):
\begin{itemize}
\item[\rm (i)] $X$ est un \'eclatement de $\BP^2_{k}$ en un $k$-point, et dans ce cas, $X(k)\neq \emptyset$.
\item[\rm (ii)] Il existe des coniques lisses $C_1$, $C_2$ sur $k$ telles que $X\iso C_1\times C_2$. Dans ce cas, on a $\Pic(X_{\bk})^{\Gamma_k}=\Pic(X_{\bk})$, $X_{\bk}\cong \BP^1_{\bk}\times \BP^1_{\bk}$ et donc l'homomorphisme $\cup$ est surjectif.
\item[\rm (iii)] Il existe une extension de corps $K/k$ de degr\'e $2$ et une conique $C$ sur $K$ telles que $X\iso R_{K/k}C$, o\`u $R_{K/k}$ d\'esigne la restriction \`a  la Weil de $K$ \`a $k$.
Dans ce cas, il existe $D_1,D_2\in \Pic(X_{\bk})$ tels que $\Gamma_k$ permute $D_1,D_2$ et l'intersection de $D_1$ et $D_2$ est un point. 
Alors $D_1\symprod D_2\in (\Sym^2\Pic(X_{\bk}))^{\Gamma_k}$, $\cup (D_1\symprod D_2)=1$ et donc l'homomorphisme $\cup$ est surjectif.
\end{itemize}

Si $d=6$, avec les notations ci-dessus, soit (cf. \cite[\S 25.5.7]{Ma}) 
$$\sigma: \Pic(X_{\bk})\to \Pic(X_{\bk}):\sum_{i=0}^3a_il_i\mapsto (a_0+c)l_0+\sum_{i=1}^3 (a_i-c)l_i\ \ \ \text{avec}\ \ \ c:=a_0+a_1+a_2+a_3,$$
et on a $W(R_3)\cong \mathfrak{S}_3\times \BZ/2\cdot \sigma$.
Soit $L_1:=\sum_{1\leq i<j\leq 3}(l_0-l_i)\symprod (l_0-l_j)$ et $L_2:=l_0\symprod (\omega+l_0)$. 
Puisque l'action de $\Gamma_k$ se factorise par $W(R_3)$,
on peut calculer que $L_1,L_2 \in (\Sym^2\Pic(X_{\bk}))^{\Gamma_k}$ et $\cup (L_1)=3$, $\cup (L_2)=-2$.
Alors $\cup$ est surjectif.
\qed
\end{proof}

\begin{lem}\label{delpezzo3lem}
Soient  $X$ une surface de del Pezzo de degr\'e $d=1,2,3,4$ et  $\CT$ un torseur universel de $X$. 
Alors
$$\frac{H^3_{\nr}(\CT,\BQ/\BZ(2))}{H^3(k,\BQ/\BZ(2))}[p]=0\ \ \ \text{pour tout nombre premier}\ \ \ 
\begin{cases} p\neq 2\ \ &\text{si} \ \ d=4   \\ p\neq 2,3\ \ &\text{si}\ \ d=2,3 \\ p\neq 2,3,5\ \ &\text{si} \ \ d=1  \end{cases}$$
\end{lem}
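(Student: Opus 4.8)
Le plan est de ramener le quotient à un groupe de cohomologie galoisienne explicite, puis d'en contrôler les nombres premiers de torsion grâce à l'action du groupe de Weyl sur $\Pic(X_{\bk})$. La suite exacte (\ref{e1}) appliquée à $\CT$ et à $\Spec k$ (où $\CCH^2(\Spec k)=0$) montre que $\frac{H^3_{\nr}(\CT,\BQ/\BZ(2))}{H^3(k,\BQ/\BZ(2))}$ est un quotient de $\BH^4(\CT,\BZ(2))/\BH^4(k,\BZ(2))$, lequel se plonge dans $H^1(k,H^1(\CT_{\bk},\CK_2))$ par (\ref{H4Z2-e}). Ces groupes étant finis (théorème \ref{H4Z2}), l'ordre du quotient divise celui de $H^1(k,H^1(\CT_{\bk},\CK_2))$ ; il suffit donc de montrer que ce dernier a une composante $p$-primaire nulle pour les $p$ voulus. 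La suite exacte courte du corollaire \ref{corsurface} fournit, comme au théorème \ref{H4Z2}(2), la suite exacte
$$0\to \coker\big((\Sym^2\Pic(X_{\bk}))^{\Gamma_k}\xrightarrow{\cup}\BZ\big)\to H^1(k,H^1(\CT_{\bk},\CK_2))\to H^1(k,\Sym^2\Pic(X_{\bk}))\to 0,$$
et je bornerais séparément les deux termes extrêmes.

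Pour le conoyau, la classe $\omega=[K_X]$ est $\Gamma_k$-invariante et vérifie $\cup(\omega\otimes\omega)=(\omega,\omega)=d$ ; l'image de $\cup$ contient donc $d\BZ$ et le conoyau est annulé par $d$. Ses diviseurs premiers divisent $d\in\{1,2,3,4\}$, donc appartiennent à $\{2,3\}$ et figurent parmi les premiers exclus dans chacun des cas.

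Pour le terme $H^1(k,\Sym^2\Pic(X_{\bk}))$, l'action de $\Gamma_k$ se factorisant par $W(R_r)$ et $H^1(k_s,\Sym^2\Pic(X_{\bk}))$ étant nul sur un corps de déploiement $k_s$ (action triviale sur un module sans torsion), l'inflation-restriction donne $H^1(k,\Sym^2\Pic(X_{\bk}))\cong H^1(G,\Sym^2\Pic(X_{\bk}))$ avec $G=\Gal(k_s/k)\sbt W(R_r)$. Par restriction à un $p$-Sylow $G_p\sbt G$, la composante $p$-primaire se plonge dans $H^1(G_p,\Sym^2\Pic(X_{\bk}))$, et il suffit d'annuler ce groupe pour tout $p$-sous-groupe $G_p$ de $W(R_r)$ et tout $p$ hors de l'ensemble exclu. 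Pour un tel $p$ on a $p\nmid d$, donc la forme d'intersection scinde sur $\BZ_{(p)}$ en $\Pic(X_{\bk})\otimes\BZ_{(p)}\cong \BZ_{(p)}\omega\oplus Q_{(p)}$, où $Q=\omega^{\perp}$ est le réseau des racines de $R_r$ ; d'après (\ref{symoplus}),
$$\Sym^2\Pic(X_{\bk})\otimes\BZ_{(p)}\cong \BZ_{(p)}\oplus Q_{(p)}\oplus \Sym^2Q_{(p)}.$$
Comme $H^1(G_p,\BZ_{(p)})=\Hom(G_p,\BZ_{(p)})=0$, on est ramené à prouver $H^1(G_p,Q_{(p)})=0$ et $H^1(G_p,\Sym^2Q_{(p)})=0$.

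Le point décisif, et le principal obstacle, est le fait suivant : pour $p$ hors de l'ensemble des \emph{mauvais} nombres premiers du système de racines $R_r$ — à savoir $\{2\}$ pour $D_5$, $\{2,3\}$ pour $E_6$ et $E_7$, et $\{2,3,5\}$ pour $E_8$, ce qui coïncide exactement avec les premiers exclus de l'énoncé — la restriction $Q_{(p)}|_{G_p}$ est un $\BZ_{(p)}[G_p]$-module de permutation pour tout $p$-sous-groupe $G_p$ de $W(R_r)$. Admettant ceci, le lemme de Shapiro donne $H^1(G_p,Q_{(p)})=0$ ; et comme le carré symétrique d'un module de permutation est encore un module de permutation (sur l'ensemble des paires non ordonnées, avec répétition), on a de même $H^1(G_p,\Sym^2Q_{(p)})=0$, ce qui achève la preuve. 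Je prouverais la propriété de permutation en examinant l'action des $p$-sous-groupes sur $Q$ : par exemple, pour $R_r=D_5$ et $p$ impair, $Q_{(p)}\cong\BZ_{(p)}^{5}$ est un module de permutation signé dont les signes se trivialisent car $H^1(G_p,(\BZ/2)^5)=0$ pour $G_p$ d'ordre impair ; le cas général se traite selon la classification des $p$-sous-groupes de $W(E_6)$, $W(E_7)$ et $W(E_8)$, qui est la partie technique la plus lourde.
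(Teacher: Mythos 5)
Your overall reduction is identical to the paper's: via le th\'eor\`eme \ref{H4Z2} (2) it suffices, for the stated primes $p$, to kill both le conoyau de $(\Sym^2\Pic(X_{\bk}))^{\Gamma_k}\xrightarrow{\cup}\BZ$ et la $p$-torsion de $H^1(k,\Sym^2\Pic(X_{\bk}))$, and your treatment of the cokernel (via $\cup(\omega\otimes\omega)=d$) is exactly what the paper does. But the second half of your argument has a genuine gap: the statement you yourself call \emph{le point d\'ecisif} --- that for $p$ outside the bad primes of $R_r$, the root lattice $Q_{(p)}=\omega^{\perp}\otimes\BZ_{(p)}$ is a permutation $\BZ_{(p)}[G_p]$-module for \emph{every} $p$-sous-groupe $G_p$ de $W(R_r)$ --- is never proved. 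You sketch only the case $D_5$ and defer $E_6$, $E_7$, $E_8$ to an unspecified \og classification des $p$-sous-groupes \fg, which is precisely where all the difficulty lies; as written, the proof is incomplete at its crucial step. (Note also that even granting a conjugation into a subgroup permuting a basis of $\Pic(X_{\bk})$, the summand $Q_{(p)}$ is a priori only a \emph{facteur direct} d'un module de permutation, not a permutation module, so your claim as stated is stronger than what you need.)

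The missing idea is much simpler, and it is what the paper uses: compare orders. By \cite[\S 26.6]{Ma}, $\#W(R_r)=2^7\cdot 3\cdot 5,\ 2^73^45,\ 2^{10}3^45\cdot 7,\ 2^{14}3^55^27$ pour $d=4,3,2,1$, while $\#\mathfrak{S}_r=2^3\cdot 3\cdot 5,\ 2^43^25,\ 2^43^25\cdot 7,\ 2^73^25\cdot 7$. For each prime $p$ allowed in the statement, the $p$-valuations coincide, so every $p$-Sylow de $\mathfrak{S}_r$ est un $p$-Sylow de $W(R_r)$; hence any $p$-subgroup of the image of $\Gamma_k$ dans $W(R_r)$ is conjugate into $\mathfrak{S}_r$, which permutes the basis $(l_i)_{i=0}^r$ de $\Pic(X_{\bk})$ from (\ref{basedePic}). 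It therefore permutes the induced basis of $\Sym^2\Pic(X_{\bk})$, so $H^1(G_p,\Sym^2\Pic(X_{\bk}))=0$ par Shapiro, and a restriction-corestriction argument gives $H^1(k,\Sym^2\Pic(X_{\bk}))[p]=0$. This also makes your detour through $Q=\omega^{\perp}$ unnecessary, together with the hypothesis $p\nmid d$ needed to split off $\BZ_{(p)}\omega$: one works with $\Pic(X_{\bk})$ itself, whose symmetric square is again a permutation module. If you want to keep your formulation, the same Sylow comparison rescues it, since $Q_{(p)}$ and $\Sym^2Q_{(p)}$ are then direct summands of permutation modules, which suffices for the $H^1$-vanishing.
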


\begin{proof}
Soit $r=9-d$.
Avec les notations ci-dessus, 
d'apr\`es \cite[\S 26.6]{Ma}, $\#W(R_r)=2^73\cdot 5$, $2^73^45,$ $2^{10}3^45\cdot 7$, $2^{14}3^55^27 $ pour $d=4,3,2,1$ respectivement.
Alors, pour tout nombre premier $p$ de l'\'enonc\'e, tout $p$-groupe de Sylow de $\mathfrak{S}_r$ est un  $p$-groupe de Sylow de $W(R_r)$.
Dans ce cas, l'action de tout $p$-sous-groupe de $W(R_r)$ sur $\Pic(X_{\bk})$ est donc de permutation.
C'est donc aussi le cas sur $\Sym^2\Pic(X_{\bk})$.
Par un  argument de corestriction-restriction, $H^1(k,\Sym^2\Pic(X_{\bk}))[p]=0$.

On consid\`ere le morphisme dans le Th\'eor\`eme \ref{H4Z2} (2): $(\Sym^2T^*)^{\Gamma_k}\xrightarrow{\cup}\CCH^2(X_{\bk})\cong \BZ$. 
Par la d\'efinition, $\cup (\omega\symprod \omega)=4,3,2,1$ pour $d=4,3,2,1$ respectivement.
On conclut alors avec le Th\'eor\`eme \ref{H4Z2} (2). 
\qed
\end{proof}

\begin{rem}{\rm
Sous les hypoth\`eses du Lemme \ref{delpezzo3lem}, par la m\^eme m\'ethode, on peut montrer le r\'esultat ci-dessous (qui est d\'ej\`a connu):
$$\frac{\Br(X)}{\Br(k)}[p]=0\ \ \ \text{pour tout nombre premier}\ \ \ 
\begin{cases} p\neq 2\ \ &\text{si} \ \ d=4   \\ p\neq 2,3\ \ &\text{si}\ \ d=2,3 \\ p\neq 2,3,5\ \ &\text{si} \ \ d=1  \end{cases}$$}
\end{rem}

\begin{thm}\label{delpezzo3}
Soient $X$ une surface de del Pezzo de degr\'e $3$ ou $4$ et $\CT$ un torseur universel de $X$. 
 Alors $\frac{H^3_{\nr}(\CT,\BQ/\BZ(2))}{H^3(k,\BQ/\BZ(2))}[p]=0$ pour tout $p\neq 2$.
\end{thm}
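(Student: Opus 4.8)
Le plan est de s\'eparer les deux degr\'es. Pour $d=4$, la conclusion est d\'ej\`a fournie par le lemme \ref{delpezzo3lem} \emph{y compris pour $p=3$} : comme $\#W(R_5)=2^7\cdot3\cdot5$, un $3$-sous-groupe de Sylow de $W(R_5)$ est d'ordre $3$, donc co\"{\i}ncide avec un $3$-Sylow de $\mathfrak{S}_5$, et l'argument de permutation du lemme s'applique \`a tout $p\neq2$. Il ne reste donc que le cas d'une surface cubique ($d=3$), o\`u le lemme ne donne que $p\neq2,3$ ; toute la difficult\'e se concentre sur la partie $3$-primaire.

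Pour $d=3$, je me ram\`enerais d'abord, via le th\'eor\`eme \ref{H4Z2} et sa preuve, \`a \'etablir $H^1(k,N)[3]=0$, o\`u $N:=H^1(\CT_{\bk},\CK_2)=\Ker\big(\Sym^2\Pic(X_{\bk})\xrightarrow{\cup}\BZ\big)$ (corollaire \ref{corsurface}). La suite exacte issue du th\'eor\`eme \ref{H4Z2}(2) se lit
$$0\to\Coker\big((\Sym^2\Pic(X_{\bk}))^{\Gamma_k}\xrightarrow{\cup}\BZ\big)\to H^1(k,N)\to H^1(k,\Sym^2\Pic(X_{\bk}))\to0.$$
Le premier point est de tuer le conoyau. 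La forme d'intersection sur $\Pic(X_{\bk})\cong\BZ^7$ \'etant \emph{unimodulaire}, l'identit\'e de $\Pic(X_{\bk})$ correspond, via $\Pic(X_{\bk})\otimes\Pic(X_{\bk})\cong\End(\Pic(X_{\bk}))$, \`a un \'el\'ement sym\'etrique et $\Gamma_k$-invariant $G\in(\Sym^2\Pic(X_{\bk}))^{\Gamma_k}$ v\'erifiant $\cup(G)=\tr(\mathrm{id})=7$. Comme $7$ est premier \`a $3$, le conoyau de $\cup$ n'a pas de $3$-torsion, ce qui fournit une injection $H^1(k,N)[3^\infty]\hookrightarrow H^1(k,\Sym^2\Pic(X_{\bk}))[3^\infty]$ ; il suffit donc de prouver $H^1(k,\Sym^2\Pic(X_{\bk}))[3]=0$.

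Par corestriction-restriction (comme dans le lemme \ref{delpezzo3lem}), il suffit alors de montrer $H^1(P,\Sym^2\Pic(X_{\bk}))=0$ pour $P$ un $3$-Sylow de $W(R_6)=W(E_6)$, l'action galoisienne se factorisant par $P$ apr\`es une extension de degr\'e premier \`a $3$. C'est exactement ici que l'argument du lemme \ref{delpezzo3lem} \'echoue : comme $\#W(E_6)=2^73^45$ tandis que $\#\mathfrak{S}_6=2^43^25$, le groupe $P$, d'ordre $3^4=81$, n'est pas conjugu\'e dans $\mathfrak{S}_6$, et $\Sym^2\Pic(X_{\bk})$ cesse d'\^etre un $P$-module de permutation. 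J'identifierais $P\cong C_3\wr C_3$ \`a l'aide du sous-syst\`eme de racines de rang maximal $A_2\times A_2\times A_2\subset E_6$ : le sous-groupe normal $C_3^3$ agit par rotation (\'el\'ement de Coxeter) sur chacun des trois r\'eseaux $A_2$, et le $C_3$ quotient permute cycliquement les trois facteurs.

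Le point central, et le principal obstacle, est alors le calcul $H^1(C_3\wr C_3,\Sym^2\Pic(X_{\bk}))=0$. Il est d\'elicat parce que les contributions \'el\'ementaires ne s'annulent pas : d\'ej\`a $H^1(C_3,A_2)\cong\BZ/3$ (le r\'eseau $A_2$ \'etant isomorphe \`a $\BZ[\zeta_3]$ comme $C_3$-module). L'annulation globale doit donc r\'esulter conjointement de la structure de produit en couronne — la suite de restriction-inflation de $1\to C_3^3\to P\to C_3\to1$ et le lemme de Shapiro ram\`enent les termes crois\'es $\Lambda_i\otimes\Lambda_j$ ($i\neq j$) \`a de la cohomologie sur le sous-groupe ab\'elien $C_3^3$ — et, de fa\c con d\'ecisive, du recollement entre $\BZ\omega$ et le r\'eseau $E_6$ \`a l'int\'erieur de $\Pic(X_{\bk})$ (d'indice $3$ dans $\BZ\omega\oplus E_6$, de vecteur de recollement $\tfrac13(\omega+r)$). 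Concr\`etement, j'\'ecrirais $H^1(P,\Sym^2\Pic(X_{\bk}))$ comme le conoyau de $(\Sym^2\Pic(X_{\bk})\otimes\BQ_3)^P\to(\Sym^2\Pic(X_{\bk})\otimes\BQ_3/\BZ_3)^P$ et v\'erifierais que tout invariant \`a valeurs dans $\BQ_3/\BZ_3$ se rel\`eve en un invariant rationnel ; c'est dans cette v\'erification que le vecteur de recollement compense pr\'ecis\'ement la $3$-torsion provenant de la composante $\omega\otimes E_6$, ce qui constitue la partie technique d\'ecisive.
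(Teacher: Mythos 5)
Your handling of degree $4$ and your reduction of the degree $3$ case are correct and agree with the paper: Lemma~\ref{delpezzo3lem} settles $d=4$ and all $p\neq 2,3$, and Theorem~\ref{H4Z2}(2) (with Corollary~\ref{corsurface}) reduces what remains to showing that $\Coker(\cup)$ and $H^1(k,\Sym^2\Pic(X_{\bk}))$ have no $3$-torsion. Your element $G$, obtained from unimodularity of the intersection form, is exactly the paper's element $L=l_0\otimes l_0-\sum_{i=1}^{6}l_i\otimes l_i$, and $\cup(G)=\tr(\mathrm{id})=7$ recovers the paper's $\cup(L)=7$; your derivation of its $\Gamma_k$-invariance is in fact cleaner than the paper's relation $6L=5(\omega\otimes\omega)-\sum_{i=1}^{27}D_i\otimes D_i$.

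The gap lies in the final step, which carries all the difficulty, and it is twofold. First, a logical one: it does \emph{not} suffice to prove $H^1(P,\Sym^2\Pic(X_{\bk}))=0$ for $P$ the full $3$-Sylow $C_3\wr C_3$ of $W(E_6)$. After your prime-to-$3$ extension, the Galois image becomes a $3$-Sylow of the \emph{original image}, i.e.\ an arbitrary $3$-subgroup $Q$ of $W(E_6)$, in general a proper subgroup of a conjugate of $P$. Vanishing of $H^1$ does not descend to subgroups: the restriction $H^1(P,M)\to H^1(Q,M)$ need not be surjective, and corestriction composed with restriction is multiplication by the $3$-power $[P:Q]$, hence useless here. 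This is not a pedantic point: for the weight lattice $\Lambda$ of $A_2$ one has $H^1(\mathfrak{S}_3,\Lambda)=0$ while $H^1(C_3,\Lambda)\cong\BZ/3$, so $H^1$ genuinely jumps under passage to subgroups. You would therefore need $H^1(Q,\Sym^2\Pic(X_{\bk}))=0$ for \emph{every} $3$-subgroup $Q\sbt W(E_6)$, and your plan addresses only $Q=P$. Second, even for $Q=P$ the computation is not carried out: you outline inflation-restriction for $1\to C_3^3\to P\to C_3\to 1$, Shapiro, and a compensation by the glue vector of $\BZ\omega\oplus E_6\sbt\Pic(X_{\bk})$, but you explicitly defer the decisive verification. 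That verification is precisely the hard content of the theorem, so the proposal as written does not prove it.

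For comparison, the paper resolves both issues at once by a mechanism that is stable under passage to subgroups: it exhibits the $28$ elements $-L$ and $D_i\otimes D_i$ ($i=1,\dots,27$, the squares of the $27$ lines) and shows, by computing the determinant of the $28\times 28$ change-of-basis matrix ($\det A=5\cdot 2^{27}$, prime to $3$), that they form a $\BZ_3$-basis of $\Sym^2\Pic(X_{\bk})\otimes\BZ_3$. Since any possible Galois image permutes the $27$ lines and fixes $L$, this makes $\Sym^2\Pic(X_{\bk})\otimes\BZ_3$ a permutation module for every subgroup of $W(E_6)$ simultaneously, whence $H^1(k,\Sym^2\Pic(X_{\bk}))[3]=0$ uniformly, with no Sylow reduction and no case analysis on the image. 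If you want to repair your approach, aim for a statement of this kind --- a $3$-adic permutation basis, a property inherited by all subgroups --- rather than the vanishing of $H^1$ of one particular $3$-group.
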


\begin{proof}
D'apr\`es le Lemme \ref{delpezzo3lem}, il suffit de montrer que $\frac{H^3_{\nr}(\CT,\BQ/\BZ(2))}{H^3(k,\BQ/\BZ(2))}[3]=0$
pour toute  surface de del Pezzo $X$ de degr\'e $3$.

On suppose donc que le degr\'e de $X$ est $3$.

Notons $T^*:=\Pic(X_{\bk})$.
Soient $(l_i)_{i=0}^6$ une base dans (\ref{basedePic}) et $\omega$ le fibr\'e en droites canonique de $X$.
Il y a 27 courbes exceptionnelles  (les 27 droites) sur $X_{\bk}$. 
Le groupe $\Gamma_k$ agit par permutation de ces droites. On les note   $(D_i)_{i=1}^{27}$.
D'apr\`es \cite[prop. 26.1]{Ma}, on peut supposer   $D_i=l_i$ pour $1\leq i\leq 6$, $D_i=2l_0-\sum_{j\neq i-6}l_j$ pour $7\leq i\leq 12$ et 
$(D_i)_{i=13}^{27}=(l_0-l_{i}-l_{j})_{1\leq i<j\leq 6}$.

Dans $\Sym^2T^*$, on note:
$$L:=(l_0\symprod l_0)-\sum_{i=1}^6(l_i\symprod l_i),$$ 
et on peut calculer que $6L=5(\omega\symprod \omega)-\sum_{i=1}^{27}(D_i\symprod D_i)$. Donc l'action de $\Gamma_k$ sur $L$ est triviale.

On consid\`ere le morphisme dans le Th\'eor\`eme \ref{H4Z2} (2): $(\Sym^2T^*)^{\Gamma_k}\xrightarrow{\cup}\CCH^2(X_{\bk})\cong \BZ$. 
Puisque $\cup (\omega\symprod \omega)=3 $ et  $\cup (L)= 7$, le morphisme $\cup$ est surjectif.
 Par le Th\'eor\`eme \ref{H4Z2} (ii), il suffit de montrer que $H^1(k,\Sym^2T^*)[3]=0$. 
 
 \medskip

Il existe une matrice $A\in M_{28\times 28}(\BZ )$, telle que dans le r\'eseau $\Sym^2T^*$, de rang $28$,  on ait l'\'egalit\'e de vecteurs  :
\begin{equation}\label{matrix}
[-L, (D_i\symprod D_i)_{i=1}^{27}]^t=A \cdot [(l_i\symprod l_i)_{i=0}^6, (-l_0\symprod l_i)_{i=1}^6,(l_i\symprod l_j)_{1\leq i<j\leq 6}]^t. 
\end{equation}

%\newpage

 Un calcul direct donne :
    $A=$
$$\xymatrix@R=0.2pc@C=0.3pc{
[-1&1&1&1&1&1&1&0&0&0&0&0&0&0&0&0&0&0&0&0&0&0&0&0&0&0&0&0\\
0&1&0&0&0&0&0&0&0&0&0&0&0&0&0&0&0&0&0&0&0&0&0&0&0&0&0&0\\
0&0&1&0&0&0&0&0&0&0&0&0&0&0&0&0&0&0&0&0&0&0&0&0&0&0&0&0\\
0&0&0&1&0&0&0&0&0&0&0&0&0&0&0&0&0&0&0&0&0&0&0&0&0&0&0&0\\
0&0&0&0&1&0&0&0&0&0&0&0&0&0&0&0&0&0&0&0&0&0&0&0&0&0&0&0\\
0&0&0&0&0&1&0&0&0&0&0&0&0&0&0&0&0&0&0&0&0&0&0&0&0&0&0&0\\
0&0&0&0&0&0&1&0&0&0&0&0&0&0&0&0&0&0&0&0&0&0&0&0&0&0&0&0\\
4&0&1&1&1&1&1&0&4&4&4&4&4&0&0&0&0&0&2&2&2&2&2&2&2&2&2&2\\
4&1&0&1&1&1&1&4&0&4&4&4&4&0&2&2&2&2&0&0&0&0&2&2&2&2&2&2\\
4&1&1&0&1&1&1&4&4&0&4&4&4&2&0&2&2&2&0&2&2&2&0&0&0&2&2&2\\
4&1&1&1&0&1&1&4&4&4&0&4&4&2&2&0&2&2&2&0&2&2&0&2&2&0&0&2\\
4&1&1&1&1&0&1&4&4&4&4&0&4&2&2&2&0&2&2&2&0&2&2&0&2&0&2&0\\
4&1&1&1&1&1&0&4&4&4&4&4&0&2&2&2&2&0&2&2&2&0&2&2&0&2&0&0\\
1&1&1&0&0&0&0&2&2&0&0&0&0&2&0&0&0&0&0&0&0&0&0&0&0&0&0&0\\
1&1&0&1&0&0&0&2&0&2&0&0&0&0&2&0&0&0&0&0&0&0&0&0&0&0&0&0\\
1&1&0&0&1&0&0&2&0&0&2&0&0&0&0&2&0&0&0&0&0&0&0&0&0&0&0&0\\
1&1&0&0&0&1&0&2&0&0&0&2&0&0&0&0&2&0&0&0&0&0&0&0&0&0&0&0\\
1&1&0&0&0&0&1&2&0&0&0&0&2&0&0&0&0&2&0&0&0&0&0&0&0&0&0&0\\
1&0&1&1&0&0&0&0&2&2&0&0&0&0&0&0&0&0&2&0&0&0&0&0&0&0&0&0\\
1&0&1&0&1&0&0&0&2&0&2&0&0&0&0&0&0&0&0&2&0&0&0&0&0&0&0&0\\
1&0&1&0&0&1&0&0&2&0&0&2&0&0&0&0&0&0&0&0&2&0&0&0&0&0&0&0\\
1&0&1&0&0&0&1&0&2&0&0&0&2&0&0&0&0&0&0&0&0&2&0&0&0&0&0&0\\
1&0&0&1&1&0&0&0&0&2&2&0&0&0&0&0&0&0&0&0&0&0&2&0&0&0&0&0\\
1&0&0&1&0&1&0&0&0&2&0&2&0&0&0&0&0&0&0&0&0&0&0&2&0&0&0&0\\
1&0&0&1&0&0&1&0&0&2&0&0&2&0&0&0&0&0&0&0&0&0&0&0&2&0&0&0\\
1&0&0&0&1&1&0&0&0&0&2&2&0&0&0&0&0&0&0&0&0&0&0&0&0&2&0&0\\
1&0&0&0&1&0&1&0&0&0&2&0&2&0&0&0&0&0&0&0&0&0&0&0&0&0&2&0\\
1&0&0&0&0&1&1&0&0&0&0&2&2&0&0&0&0&0&0&0&0&0&0&0&0&0&0&2].
}$$
Par calcul direct (et aussi par ordinateur) on \'etablit $\det(A)=5\cdot 2^{27}$. 
D'apr\`es (\ref{matrix}), le $\BZ_3$-module $(\Sym^2T^*)\otimes \BZ_3$ est libre avec une base: $-L, (D_i\symprod D_i)_{i=1}^{27}$.
Puisque le groupe $\Gamma_k$ agit par permutation des vecteurs $-L, (D_i\symprod D_i)_{i=1}^{27}$, on a
$$H^1(k,(\Sym^2T^*)\otimes \BZ_3)\cong 0\ \ \ \text{et}\ \ \ H^1(k,\Sym^2T^*)[3]=0.$$
\qed
\end{proof}

\begin{cor}\label{delP23}
Soient $X$ une surface de del Pezzo de degr\'e $2$.
 $\CT$ un torseur universel de $X$ et $\CT^c$ une compactification lisse de $\CT$. 
 Alors $\frac{H^3_{\nr}(\CT^c,\BQ/\BZ(2))}{H^3(k,\BQ/\BZ(2))}[p]=0$ pour tout $p\neq 2$.
\end{cor}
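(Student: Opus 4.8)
Le plan est de se ramener au cas du degré $3$, déjà traité au théorème \ref{delpezzo3}. D'après le lemme \ref{delpezzo3lem}, pour une surface de del Pezzo de degré $2$ le groupe $\frac{H^3_{\nr}(\CT^c,\BQ/\BZ(2))}{H^3(k,\BQ/\BZ(2))}[p]$ est déjà nul pour tout premier $p\neq 2,3$ ; il ne reste donc qu'à établir l'annulation de la partie $3$-primaire. Pour cela je noterais $\rho:\Gamma_k\to W(R_7)$ l'action galoisienne sur $\Pic(X_{\bk})$ (qui se factorise bien par le groupe de Weyl, cf. point (2) de la section), $G:=\rho(\Gamma_k)$ son image, $P\sbt G$ un $3$-sous-groupe de Sylow, et $K$ le corps fixe de $\rho^{-1}(P)$. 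Ainsi $[K:k]=[G:P]$ est premier à $3$ et $\Gamma_K$ agit sur $\Pic(X_{\bk})$ à travers le $3$-groupe $P$.

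L'observation-clef est que $\Gamma_K$ permute les $56$ courbes exceptionnelles de $X_{\bk}$ à travers $P$ : comme le nombre de points fixes d'un $p$-groupe agissant sur un ensemble fini $S$ vérifie $|S^P|\equiv |S|\pmod p$, et comme $56\equiv 2\pmod 3$, il existe au moins une courbe exceptionnelle $\Gamma_K$-invariante. Quitte à remplacer $K$ par une extension quadratique convenable (encore de degré premier à $3$), cette courbe, qui est une conique lisse sur $K$, acquiert un point rationnel et devient donc une $(-1)$-courbe isomorphe à $\BP^1$, avec en particulier $X_K(K)\neq\emptyset$. En la contractant (Castelnuovo) on obtient une surface de del Pezzo $X'$ de degré $3$ sur $K$, $K$-birationnellement équivalente à $X_K$. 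D'après \cite[Prop. 2.9.2]{CTS}, il existe alors un torseur universel de $X'$, de compactification lisse $\CT'^c$, stablement $K$-birationnellement équivalente à $\CT^c_K$ ; comme $H^3_{\nr}(-,\BQ/\BZ(2))$ est un invariant stablement birationnel des variétés projectives lisses, on en déduit $H^3_{\nr}(\CT^c_K,\BQ/\BZ(2))\cong H^3_{\nr}(\CT'^c,\BQ/\BZ(2))$.

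Le théorème \ref{delpezzo3} appliqué à $X'$ donne $\frac{H^3_{\nr}(\CT',\BQ/\BZ(2))}{H^3(K,\BQ/\BZ(2))}[3]=0$, et comme $\CT'\sbt\CT'^c$ est un ouvert d'une variété lisse on a l'inclusion $\frac{H^3_{\nr}(\CT'^c,\BQ/\BZ(2))}{H^3(K,\BQ/\BZ(2))}\incl\frac{H^3_{\nr}(\CT',\BQ/\BZ(2))}{H^3(K,\BQ/\BZ(2))}$, d'où l'annulation de la partie $3$-primaire pour $\CT'^c$, donc pour $\CT^c_K$. Enfin, $[K:k]$ étant premier à $3$ et le transfert étant bien défini sur ces quotients (cf. corollaire \ref{zerorationnel}), un argument de restriction-corestriction ($\mathrm{cor}\circ\mathrm{res}=[K:k]$, inversible sur la torsion $3$-primaire) montre que la restriction $\frac{H^3_{\nr}(\CT^c,\BQ/\BZ(2))}{H^3(k,\BQ/\BZ(2))}[3]\to\frac{H^3_{\nr}(\CT^c_K,\BQ/\BZ(2))}{H^3(K,\BQ/\BZ(2))}[3]$ est injective ; sa cible étant nulle, la source l'est aussi. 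Combiné au lemme \ref{delpezzo3lem}, ceci donne l'annulation pour tout $p\neq 2$. L'étape délicate sera la descente de la courbe exceptionnelle fixe en une $(-1)$-courbe effectivement contractible (isomorphe à $\BP^1$) sur une extension de degré premier à $3$ — d'où l'extension quadratique auxiliaire — ainsi que la vérification soigneuse que $X_K(K)\neq\emptyset$ permet d'invoquer \cite[Prop. 2.9.2]{CTS} pour transporter l'invariant $H^3_{\nr}(\CT^c)$ le long de l'équivalence birationnelle.
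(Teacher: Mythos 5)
Votre d\'emonstration est correcte et suit pour l'essentiel la m\^eme route que celle de l'article : r\'eduction \`a $p=3$ par le lemme \ref{delpezzo3lem}, passage \`a une extension $K/k$ de degr\'e premier \`a $3$ sur laquelle $\Gamma_K$ agit sur $\Pic(X_{\bk})$ via un $3$-groupe, production d'une courbe exceptionnelle invariante, contraction en une surface de del Pezzo de degr\'e $3$, application du th\'eor\`eme \ref{delpezzo3} (puis de l'inclusion $H^3_{\nr}(\CT'^c)\sbt H^3_{\nr}(\CT')$), et enfin transfert par restriction-corestriction, le degr\'e \'etant premier \`a $3$. La seule divergence r\'eelle est la justification de l'existence de la courbe exceptionnelle invariante : vous l'obtenez par comptage des points fixes d'un $3$-groupe agissant sur les $56$ courbes exceptionnelles (comme $56\equiv 2\pmod 3$, il y a un point fixe), tandis que l'article compare les $3$-sous-groupes de Sylow de $W(R_6)\sbt W(R_7)$ (de m\^eme ordre $3^4$) et invoque \cite[Cor. 26.7]{Ma} ; votre argument est plus \'el\'ementaire et autonome, celui de l'article s'appuie sur la structure des groupes de Weyl, mais les deux reviennent au m\^eme fait (le stabilisateur d'une courbe exceptionnelle dans $W(R_7)$ est $W(R_6)$). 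Signalons que l'\'etape que vous qualifiez de d\'elicate est en fait superflue : une courbe exceptionnelle $C$ d\'efinie sur $K$ (i.e. $\Gamma_K$-stable, donc descendue par descente galoisienne) est automatiquement isomorphe \`a $\BP^1_K$, car son fibr\'e normal est de degr\'e impair $C^2=-1$, alors qu'une conique sans point rationnel ne porte que des fibr\'es en droites de degr\'e pair ; la contraction de Castelnuovo s'applique donc directement sur $K$, sans extension quadratique auxiliaire. Cette pr\'ecaution reste de toute fa\c{c}on inoffensive ($2$ est premier \`a $3$), et la condition $X_K(K)\neq\emptyset$ n'est d'ailleurs requise ni par le th\'eor\`eme \ref{delpezzo3} ni par l'usage que fait l'article de \cite[Prop. 2.9.2]{CTS}.
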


\begin{proof}
D'apr\`es le Lemme \ref{delpezzo3lem},
il suffit de montrer $\frac{H^3_{\nr}(\CT^c,\BQ/\BZ(2))}{H^3(k,\BQ/\BZ(2))}[3]=0$.

Sous les notations ci-dessus, on a l'inclusion canonique $W(R_6)\sbt W(R_7)$ et les sous-$p$-groupes de Sylow de $W(R_6)$ et de $W(R_7)$ sont isomorphes pour tout $p\neq 2$.

Dans le cas d'une surface de del Pezzo de degr\'e 2, l'action de $\Gamma_k$ sur $\Pic(X_{\bk})$ se factorise par $W(R_7)$, et donc, apr\`es conjugaison, il existe une extension de corps $k\sbt k'$ avec $([k':k],p)=1$ telle que l'action de $\Gamma_{k'}$ sur $\Pic(X_{\bk})$ se factorise par  $W(R_6)\sbt W(R_7)$. 
Par \cite[Cor. 26.7]{Ma}, dans $X_{k'}$, il existe une courbe exceptionnelle d\'efinie sur $k'$, donc il existe une surface de del Pezzo de degr\'e 3 $X'$ sur $k'$ et un $k'$-morphisme propre, surjectif, birationnel $X_{k'}\ra X'$. 
D'apr\`es le Th\'eor\`eme \ref{delpezzo3} et un argument de restriction-inflation,  on a $\frac{H^3_{\nr}(\CT^c,\BQ/\BZ(2))}{H^3(k,\BQ/\BZ(2))}[3]=0$.
\qed
\end{proof}

\begin{exam}{\rm
Soit $k$ un corps contenant une racine cubique de l'unit\'e non-triviale.
Soit $X$ une surface cubique diagonale d'\'equation:
$$ ax^3+by^3+cz^3+dt^3=0$$
en les variables $x,y,z,t$, avec $a,b,c,d\in k^{\times}$.
Soit $K:=k(\sqrt[3]{b/a},\sqrt[3]{ad/bc})$. 
Alors $X_K$ est  une $K$-surface $K$-rationnelle.
D'apr\`es le Corollaire \ref{zerorationnel}, $\frac{H^3_{\nr}(\CT^c,\BQ/\BZ(2))}{H^3(k,\BQ/\BZ(2))}$ est annul\'e par $9$.
D'apr\`es le Th\'eor\`eme \ref{delpezzo3}, on a $\frac{H^3_{\nr}(\CT^c,\BQ/\BZ(2))}{H^3(k,\BQ/\BZ(2))}=0$.}
\end{exam}

\begin{thm}\label{sauf2prime}
Soit $X$ une $k$-surface projective, lisse, g\'eom\'etriquement rationnelle.
Soit $\mathcal{T} \to X$ un torseur universel sur $X$ et soit $\mathcal{T}^c$
une $k$-compactification lisse de  $\mathcal{T}$.
Supposons que la surface $X$ est $k$-birationnellement \'equivalente \`a une surface de del Pezzo de degr\'e $\geq 2$
 ou \`a une surface fibr\'ee en coniques au-dessus d'une conique. 
Alors $\frac{H^3_{\nr}(\CT^c,\BQ/\BZ(2))}{H^3(k,\BQ/\BZ(2))}[p]=0$ pour tout $p\neq 2$.
\end{thm}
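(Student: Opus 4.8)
The plan is to reduce, by birational invariance, to a minimal model and then to invoke the results already proved. Fix an odd prime $p$. Since the quotient $\frac{H^3_{\nr}(\CT^c,\BQ/\BZ(2))}{H^3(k,\BQ/\BZ(2))}$ depends only on the $k$-birational class of $X$ (by \cite[Prop. 2.9.2]{CTS}, exactly as in the proof of Theorem~\ref{coniccoh}), I may replace $X$ by any $k$-birational model. Recall moreover the inclusion $\frac{H^3_{\nr}(\CT^c,\BQ/\BZ(2))}{H^3(k,\BQ/\BZ(2))}\subset\frac{H^3_{\nr}(\CT,\BQ/\BZ(2))}{H^3(k,\BQ/\BZ(2))}$ from the introduction, so that it will be enough to kill the $p$-primary part of the larger group attached to the open torsor $\CT$.

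If $X$ is $k$-birational to a del Pezzo surface of degree $\geq 2$, the conclusion is already in hand: degree $\geq 5$ is Proposition~\ref{delpezzo5} (the whole quotient vanishes), degrees $3$ and $4$ are Theorem~\ref{delpezzo3}, and degree $2$ is Corollary~\ref{delP23}. In each case the $p$-primary part vanishes for $p\neq 2$, and the passage between $\CT$ and $\CT^c$ is handled by the inclusion above.

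The substantive case is that of a relatively minimal conic bundle $\pi\colon X\to C$ over a conic $C$. Here I would run the criterion of Theorem~\ref{H4Z2}(2), as in the del Pezzo proofs: it suffices to show that the cokernel of $\cup\colon(\Sym^2\Pic(X_{\bk}))^{\Gamma_k}\to\CCH^2(X_{\bk})\cong\BZ$ has trivial $p$-part and that $H^1(k,\Sym^2\Pic(X_{\bk}))[p]=0$. For the cokernel, write $f\in\Pic(X_{\bk})$ for the class of a geometric fibre and $\omega=[K_X]$; both are $\Gamma_k$-invariant, so $f\otimes\omega\in(\Sym^2\Pic(X_{\bk}))^{\Gamma_k}$, and adjunction on a fibre gives $\cup(f\otimes\omega)=(f,K_X)=-2$; hence $2\in\mathrm{im}(\cup)$, the cokernel is $2$-primary and its $p$-part vanishes for odd $p$. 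For the $H^1$ I would argue by corestriction--restriction as in Lemma~\ref{delpezzo3lem}: letting $G$ be the finite image of $\Gamma_k$ in $\Aut(\Pic(X_{\bk}))$ and $k'/k$ the fixed field of an odd $p$-Sylow $P\subset G$ (so $[k':k]$ is prime to $p$), it is enough to see $H^1(P,\Sym^2\Pic(X_{\bk}))=0$. The symmetries of $\Pic(X_{\bk})$ preserving $\omega$, the intersection form and the fibration class $f$ form a signed permutation group $(\BZ/2)^n\rtimes\mathfrak{S}_n$, the factor $(\BZ/2)^n$ interchanging the two components $e_i\leftrightarrow f-e_i$ of each of the $n$ degenerate fibres and $\mathfrak{S}_n$ permuting them (cf. \cite[Ch. 4]{Ma}); since $|(\BZ/2)^n|$ is a power of $2$, the odd $p$-group $P$ acts through the set of the $2n$ fibre components, which it permutes, and the resulting lattice $\Pic(X_{\bk})$ is a permutation $P$-module, hence so is $\Sym^2\Pic(X_{\bk})$ and $H^1(P,\Sym^2\Pic(X_{\bk}))=0$. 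Combining the two inputs in the exact sequences of Theorem~\ref{H4Z2}(2) gives $H^1(k,H^1(\CT_{\bk},\CK_2))[p]=0$, whence $(\BH^4(\CT,\BZ(2))/\BH^4(k,\BZ(2)))[p]=0$; as this finite group surjects onto $\frac{H^3_{\nr}(\CT,\BQ/\BZ(2))}{H^3(k,\BQ/\BZ(2))}$ via (\ref{e1}), the latter has trivial $p$-part, and the inclusion above transfers the statement to $\CT^c$.

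The step I expect to be delicate is the verification that an odd $p$-Sylow acts on $\Pic(X_{\bk})$ by a permutation of a $\BZ$-basis: although $p$-Sylows avoid the $2$-torsion sign part of $(\BZ/2)^n\rtimes\mathfrak{S}_n$, the relation $e_i+(f-e_i)=f$ couples the components to the fibre class, so the permutation-module property has to be checked at the level of the lattice (it does hold, as the smallest orbits already confirm). A minor point is the base conic $C$: when $C(k)=\emptyset$ one could first extend to its quadratic splitting field, of degree $2$ prime to $p$, but in fact the embedding $G\subset(\BZ/2)^n\rtimes\mathfrak{S}_n$ is insensitive to the arithmetic of $C$, so no separate argument is required.
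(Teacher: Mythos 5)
Your reduction to a minimal model and your handling of the del Pezzo cases simply reassemble the paper's own results (Proposition~\ref{delpezzo5}, Th\'eor\`eme~\ref{delpezzo3}, Corollaire~\ref{delP23}, together with \cite[Prop. 2.9.2]{CTS}), and your computation that the cokernel of $\cup$ is $2$-primaire via $\cup(f\otimes\omega)=(f,K_X)=-2$ is correct. The gap is in the conic-bundle case, exactly at the step you flag as delicate: the claim that $\Pic(X_{\bk})$ is a permutation module for an odd $p$-Sylow $P$ is asserted (``the smallest orbits already confirm''), not proved, and the structural statement you base it on is false. The isometries of $\Pic(X_{\bk})$ fixing $\omega$, $f$ and the intersection form do \emph{not} form $(\BZ/2)^n\rtimes\mathfrak{S}_n$: a swap $e_i\leftrightarrow f-e_i$ at a single fibre does not extend to an integral automorphism of the lattice, since writing $s$ for a section class the constraints force $\sigma(s)=s+af-e_i$ with $\sigma(s)^2=s^2+2a-1$, i.e. $2a=1$. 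Only the even-swap subgroup $(\BZ/2)^n_{\mathrm{even}}\rtimes\mathfrak{S}_n$ acts on the lattice. More importantly, even granting that $P$ (being of odd order) injects into $\mathfrak{S}_n$, this only gives a permutation action on the corank-one sublattice $M=\langle f,e_1,\dots,e_n\rangle$; the section class generates $\Pic(X_{\bk})/M\cong\BZ$, and the relation you yourself point to ($e_i+(f-e_i)=f$) is not the issue — the issue is whether the extension $0\to M\to\Pic(X_{\bk})\to\BZ\to 0$ of $P$-modules splits. Without that, ``$P$ permutes the components'' does not yield $H^1(P,\Sym^2\Pic(X_{\bk}))=0$.

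The gap is fillable, so your route can be completed: (i) since $|P|$ is odd, any element of $P$ stabilizing a degenerate fibre fixes each of its two components (an odd-order group acts trivially on a two-element set), hence $P$-orbits of components map bijectively to $P$-orbits of fibres and one can choose a $P$-stable set of components, one per fibre, making $M$ a permutation $P$-module; (ii) $H^1(P,M)=0$ for permutation modules, so the extension above splits, $\Pic(X_{\bk})\cong M\oplus\BZ$ is a permutation $P$-module, $\Sym^2\Pic(X_{\bk})$ is again one, and corestriction-restriction gives $H^1(k,\Sym^2\Pic(X_{\bk}))[p]=0$; your appeal to Th\'eor\`eme~\ref{H4Z2}(2) then finishes as in Lemme~\ref{delpezzo3lem}. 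You should know, however, that the paper's own proof of this case avoids lattices entirely and is much shorter: it splits the base conic by a quadratic extension $K_1$, kills the class $[X_\eta]\in\Br(k(C))[2]$ of the generic fibre over the fixed field $K_2$ of a $p$-Sylow of a splitting group (restriction-corestriction, using that $[X_\eta]$ is $2$-torsion while $[k':K_2]$ is a power of $p$), so that $X_K$ is $K$-rationnelle for $K=K_1K_2$ of degree prime to $p$, and then quotes Corollaire~\ref{zerorationnel}.
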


\begin{proof}
Soit $X$  une surface fibr\'ee en coniques au-dessus d'une conique $C$. 
Il existe une extension $K_1/k$ de degr\'e $2$ telle que $C_{K_1}\cong \BP^1_{K_1}$.
Notons $\eta$ le point g\'en\'erique de $C$ et $X_{\eta}$ sa fibre.
Alors $X_{\eta}$ est une conique, et donc elle d\'efinit $[X_{\eta}]\in \Br(k(C))[2]$.
Puisque $\Br(\bk(C))=0$, il existe une extension finie galoisienne $k'/k$ tel que $[X_{\eta}]|_{k'(C)}=0$.
Soient et $k\sbt K_2\sbt k'$ l'extension  correspondant \`a un $p$-sous-groupe de Sylow de $\Gal(k'/k)$.
Alors $p\nmid [K_2:k]$ et, par un argument de restriction-inflation, $[X_{\eta}]|_{K_2(C)}=0$.
Soit $K:=K_1\cdot K_2$. 
On a $p\nmid [K:k]$, $C_K\cong \BP^1_K$ et $X_{\eta}\times_kK\cong \BP^1_{K(C)}$.
Donc $X_K$ est  $K$-rationnelle.
D'apr\`es le Corollaire \ref{zerorationnel}, l'\'enonc\'e vaut pour une telle surface $X$.

Soit $X$ une surface de del Pezzo de degr\'e $\geq 2$. 
D'apr\`es  la Proposition \ref{delpezzo5}, le Th\'eor\`eme \ref{delpezzo3} et le Corollaire \ref{delP23}, l'\'enonc\'e vaut pour une telle surface $X$.

En g\'en\'eral, soit $X_1$ une surface projective, lisse, g\'eom\'etriquement rationnelle telle que $X$ et $X_1$ soient birationnellement \'equivalents. 
Il existe un torseur universel $\CT_1^c$ de $X_1$ tel que $\CT^c$ et $\CT_1^c$ soient stablement birationnellement \'equivalents par \cite[Prop. 2.9.2]{CTS}.
Donc  $H_{\nr}^3(\CT^c,\BQ/\BZ(2))\iso H_{\nr}^3(\CT_1^c,\BQ/\BZ(2))$ et on peut remplacer $X$ par $X_1$.
Donc on peut supposer que $X$ est une surface de del Pezzo de degr\'e $\geq 2$ ou une surface fibr\'ee en coniques au-dessus d'une conique.
On obtient le th\'eor\`eme en rassemblant les r\'esultats ci-dessus.
\qed
\end{proof}

Par la classification $k$-birationnelle des surfaces projectives, lisses (cf. \cite[Thm. 2.1]{K2}), 
toute surface projective, lisse, g\'eom\'etriquement rationnelle est
$k$-birationnellement \'equivalent \`a soit une surface de del Pezzo,
soit une surface fibr\'ee en coniques au-dessus d'une conique.
Dans le Th\'eor\`eme \ref{sauf2prime}, le seul cas que l'on ne traite pas est alors le cas o\`u $X$ est une surface de del Pezzo $k$-minimale de degr\'e 1.

\subsection*{Remerciements}
Nous remercions Jean-Louis Colliot-Th\'el\`ene pour plusieurs discussions.
Je remercie vivement les rapporteurs de l'\'Epijournal de G\'eom\'etrie Alg\'ebrique pour leurs commentaires.
Projet soutenu par l'attribution d'une allocation de recherche R\'egion Ile-de-France.

\providecommand{\bysame}{\leavevmode\hbox to3em{\hrulefill}\thinspace}
%\providecommand{\MR}{\relax\ifhmode\unskip\space\fi MR }
% \MRhref is called by the amsart/book/proc definition of \MR.
%\providecommand{\MRhref}[2]{%
%  \href{http://www.ams.org/mathscinet-getitem?mr=#1}{#2}
%}
%\providecommand{\href}[2]{#2}
%\begin{thebibliography}{{Kem}92}
%
%
%\end{thebibliography}

\bibliographystyle{amsalpha}

\begin{thebibliography}{Gro}
\bibitem[AB]{AB} A. Auel et M. Bernardara, \emph{Semiorthogonal decompositions and birational geometry of del Pezzo surfaces over arbitrary fields}, Proc. Lond. Math. Soc. (3) {\bf 117} (2018), no. 1, 1--64.
\MR{3830889}

\bibitem[An73] {An73} S. Anantharaman, \emph{Sch\'emas en groupes, espaces homog\`enes et espaces al\'ebriques sur une base de dimension~1}. Dans: Sur les groupes alg\'ebriques, pp. 5--79, Bull. Soc. Math. France, M\'em. 33, Soc. Math. France, Paris, 1973.
\MR{0335524}

\bibitem[BBD]{BBD} A. Beilinson, J. Bernstein et P. Deligne, \emph{Faisceaux pervers}. Dans:
Analyse et topologie sur les espaces singuliers I (Luminy, 1981), pp. 5--171, Ast\'erisque, vol. 100, Soc. Math. France (1982).
\MR{0751966}

\bibitem[Bour]{Bour} N. Bourbaki,
\emph{\'El\'ements de math\'ematique. Alg\`ebre. Chapitres 1 \`a 3}, Hermann, Paris, 1970. \MR{0274237}

\bibitem[Cao]{C2} Y. Cao, \emph{Cohomologie non ramifi\'ee de degr\'e 3 : vari\'et\'es cellulaires et   surfaces de del Pezzo de  degr\'e au moins~5}, Ann. K-Theory {\bf 3} (2018), no. 1, 157--171.
\MR{3695367}

\bibitem[CX]{CX} Y. Cao et F. Xu, \emph{Strong approximation with Brauer-Manin obstruction for toric varieties}, Ann. Inst. Fourier (Grenoble), \`a para\^itre.
\href{https://arxiv.org/abs/1311.7655}{arXiv:1311.7655}

\bibitem[CT83]{CT83} J.-L.  Colliot-Th\'el\`ene, \emph{Hilbert's theorem 90 for $K_2$, with application to the Chow groups of rational surfaces}, Invent. math. {\bf 71} (1983), no. 1, 1--20.
\MR{0688259}

\bibitem[CT87]{CT87} J.-L.  Colliot-Th\'el\`ene, \emph{Arithm\'etique des vari\'et\'es rationnelles et probl\`emes birationnels}. Dans:
 Proceedings of the International Congress of Mathematicians, vol. 1 (Berkeley, California, 1986), pp. 641--653, Amer. Math. Soc., Providence, RI, 1987.
\MR{0934267}
  
\bibitem[CT91]{CT2} J.-L.  Colliot-Th\'el\`ene, \emph{Cycles alg\'ebriques de torsion et K-th\'eorie alg\'ebrique}. Dans: Arithmetic algebraic geometry (Trento, 1991), pp. 1--49, Lecture Notes in Math., vol. 1553, Springer, Berlin, 1993.
\MR{1338859}

  
\bibitem[CT95]{CT95}  J.-L.  Colliot-Th\'el\`ene, \emph{Birational invariants, purity and the Gersten conjecture}. Dans: K-Theory and algebraic geometry: connections with quadratic forms and division algebras (Santa Barbara, CA, 1992), pp. 1--64, Proc. Sympos. Pure Math., vol. 58, Part 1, Amer. Math. Soc., Providence, RI, 1995.
\MR{1327280}

\bibitem[CT99]{CT99}  J.-L. Colliot-Th\'el\`ene, \emph{Points rationnels sur les vari\'et\'es non de type g\'en\'eral. Chapitre II: Surfaces rationnelles}, 1999.
Voir \href{https://www.math.u-psud.fr/~colliot/Cours99_ChapII.pdf}{https://www.math.u-psud.fr/$\sim$colliot/Cours99\_ChapII.pdf}

\bibitem[CT15]{CT1} J.-L.  Colliot-Th\'el\`ene, \emph{Descente galoisienne sur le second groupe de Chow: mise au point et applications}, Doc. Math. (2015), extra vol.: Alexander S. Merkurjev's sixtieth Birthday, 195--220.
\MR{3404380}

\bibitem[CTCS]{CTCS} J.-L.  Colliot-Th\'el\`ene, D. Coray et J.-J. Sansuc, \emph{Descente et principe de Hasse pour certaines vari\'et\'es rationnelles}, J. Reine Angew. Math. {\bf 320} (1980), 150--191.
\MR{0592151}

\bibitem[CTHK]{CTHK} J.-L. Colliot-Th\'el\`ene, R. T. Hoobler et B. Kahn, \emph{The Bloch-Ogus-Gabber theorem}. Dans: Algebraic K-theory (Toronto, ON, 1996), pp. 31--94, Fields Inst. Commun., vol. 16, Amer. Math. Soc., Providence, RI, 1997.
\MR{1466971}

\bibitem[CTHS]{CTHS} J.-L. Colliot-Th\'el\`ene, D. Harari et A. N. Skorobogatov, \emph{Compactification \'equivariante d'un tore (d'apr\`es Brylinski et K\"unnemann)}, Expo. Math. {\bf 23} (2005), no. 2, 161--170.
\MR{2155008}

\bibitem[CTS77]{CTS2}  J.-L. Colliot-Th\'el\`ene et J.-J. Sansuc, \emph{Vari\'et\'es de premi\`ere descente attach\'ees aux vari\'et\'es rationnelles}, C. R. Acad. Sci. Paris S\'er. A-B {\bf 284} (1977), no. 16, A967--A970.
\MR{0447246}

\bibitem[CTS80]{CTSA} J.-L.  Colliot-Th\'el\`ene et J.-J. Sansuc, \emph{La descente sur les vari\'et\'es rationnelles}. Dans: Journ\'ees de G\'eom\'etrie Alg\'ebrique d'Angers (Juillet 1979), pp. 223--237, 
Sijthhof \& Noordhoff, Alphen aan den Rijn-Germantown, Md., 1980.
\MR{0605344}
  
\bibitem[CTS81]{CTS81} J.-L. Colliot-Th\'el\`ene et J.-J. Sansuc, \emph{On the Chow groups of certain rational surfaces: a sequel to a paper of S. Bloch}, Duke Math. J. {\bf 48} (1981), no. 2, 421--447.
\MR{0620258} 
  
  
\bibitem[CTS87]{CTS} J.-L. Colliot-Th\'el\`ene et J.-J. Sansuc, \emph{La descente sur les vari\'et\'es rationnelles II}, Duke Math. J. {\bf 54} (1987), no. 2, 375--492.
\MR{0899402}

\bibitem[CTSSD]{CTSSD} J.-L. Colliot-Th\'el\`ene, J.-J. Sansuc et P. Swinnerton-Dyer, \emph{Intersections of two quadrics and Ch\^atelet surfaces II}, J. Reine Angew. Math. {\bf 374} (1987), 72--168.
\MR{0876222}


\bibitem[EKLV]{EKLV} H. Esnault, B. Kahn, M. Levine et E. Viehweg, \emph{The Arason invariant and mod 2 algebraic cycles}, J.~Amer. Math. Soc. {\bf 11} (1998), no. 1, 73--118.
\MR{1460391}

\bibitem[F]{Fu} L. Fu, \emph{\'Etale cohomology theory}, Nankai Tracts in Mathematics, vol. 13, World Scientific Publishing Co. Pte. Ltd., Hackensack, NJ, 2011.
\MR{2791606}

\bibitem[Fu84]{Ful} W. Fulton, \emph{Intersection theory}, 
Ergebnisse der Mathematik und ihrer Grenzgebiete (3), vol. 2,
Springer-Verlag, Berlin, 1984.
\MR{0732620}

\bibitem[Fu93]{Ful93} W. Fulton, \emph{Introduction to toric varieties}, Annals of Mathematics Studies, vol. 131, Princeton University Press, Princeton, NJ, 1993.
\MR{1234037}


\bibitem[HS]{HS} D. Harari et A. Skorobogatov, \emph{The Brauer group of torsors and its arithmetic applications}, Ann. Inst. Fourier (Grenoble) {\bf 53} (2003), no. 7, 1987--2019.
\MR{2044165}

\bibitem[Ka93]{K93} B. Kahn, \emph{Descente galoisienne et $K_2$ des corps de nombres}, K-theory {\bf 7} (1993), no. 1, 55--100.
\MR{1220427}

\bibitem[Ka96]{K96} B. Kahn, \emph{Applications of weight-two motivic cohomology}, Doc. Math. {\bf 1} (1996), no. 17, 395--416.
\MR{1423901}

\bibitem[Ka97]{K97} B. Kahn, \emph{Motivic cohomology of smooth geometrically cellular varieties}. Dans: Algebraic $K$-theory (Seattle, WA, 1997), pp. 149--174, Proc. Symposia Pure Math., vol. 67, Amer. Math. Soc., Providence, RI, 1999.
\MR{1743239}

\bibitem[Koll96]{K2} J. Koll\'ar, \emph{Rational curves on algebraic varieties}, Ergebnisse der Mathematik und ihrer Grenzgebiete (3), vol. 32, Springer-Verlag, Berlin, 1996.
\MR{1440180}

\bibitem[KS]{KS} M. Kashiwara et P. Schapira, \emph{Categories and sheaves}, Grundlehren der Mathematischen Wissenschaften, vol. 332, Springer-Verlag, Berlin, 2006.
\MR{2182076}

\bibitem[L87]{L1} S. Lichtenbaum, \emph{The construction of weight-two arithmetic cohomology}, Invent. math. {\bf 88} (1987), no. 1, 183--215.
\MR{0877012}

\bibitem[L90]{L2} S. Lichtenbaum,  \emph{New results on weight-two motivic cohomology}. Dans: The Grothendieck Festschrift, Vol. III, pp. 35--55, Progr. Math., vol. 88, Birkh\"auser Boston, Boston, MA, 1990.
\MR{1106910}


\bibitem[Ma]{Ma} Y. Manin, \emph{Cubic forms. Algebra, geometry, arithmetic}, 
2nd edition, translated from the Russian by M. Hazewinkel, North-Holland Mathematical Library, vol. 4, North-Holland Publishing Co., Amsterdam, 1986.
\MR{0833513}


\bibitem[Me03]{Mer1} A. Merkurjev, \emph{Rost invariants of simply connected algebraic groups},
with a section by S. Garibaldi. Dans: Cohomological Invariants in Galois Cohomology, 
pp. 101--158, Univ. Lecture Ser., vol. 28, Amer. Math. Soc., Providence, RI, 2003.
\MR{1999385}

\bibitem[Me08]{M3} A. Merkurjev, \emph{Unramified elements in cycle modules}, J. London Math. Soc. (2) {\bf 78} (2008), no. 1, 51--64.
\MR{2427051}

\bibitem[Me16]{Mer} A. Merkurjev, \emph{Weight two motivic cohomology of classifying spaces for semisimple groups}, Amer. J. Math. {\bf 138} (2016), no. 3, 763--792.
\MR{3506385}

\bibitem[Oda]{Oda} T. Oda, \emph{Convex bodies and algebraic geometry. An introduction to the theory of toric varieties.} Ergebnisse der Mathematik und ihrer Grenzgebiete (3), vol. 15, Springer-Verlag, Berlin, 1988.
\MR{0922894}

\bibitem[Pir]{Pi}   A. Pirutka,   \emph{Sur le groupe de Chow de codimension deux des vari\'et\'es sur les corps finis},  Algebra Number Theory {\bf 5} (2011), no. 6, 803--817.
\MR{2923728}

\bibitem[PV]{PV} V. L. Popov et \`E. B. Vinberg, \emph{Invariant Theory}.
Dans:
Algebraic geometry 4, pp. 137--314, 315, Itogi Nauki i Tekhniki, Sovrem. Probl. Mat. Fund. Naprav., vol. 55, Akad. Nauk SSSR, Vsesoyuz. Inst. Nauchn. i Tekhn. Inform., Moscow, 1989;
traduction anglaise dans:
Algebraic Geometry IV, pp. 123--278, Encyclopaedia of Mathematical Sciences, vol. 55, Springer-Verlag, Berlin, 1994. \MR{1100485}

\bibitem[Ro96]{Ro} M. Rost, \emph{Chow groups with coefficients}, Doc. Math. {\bf 1} (1996), no. 16, 319--393.
\MR{1418952}

\bibitem[S]{S} J.-J. Sansuc, \emph{Groupe de Brauer et arithm\'etique des groupes alg\'ebriques lin\'eaires sur un corps de nombres},
  J. reine angew. Math. {\bf 327}, (1981), 12--80.
  \MR{0631309}
  
\bibitem[Sko]{sko01} A. Skorobogatov, \emph{Torsors and rational points}, Cambridge Tracts in Mathematics, vol. 144, Cambridge University Press, Cambridge, 2001.
\MR{1845760}

\bibitem[Sum]{Su} H. Sumihiro, \emph{Equivariant completion}, J. Math. Kyoto Univ. {\bf 14} (1974), 1--28.
\MR{0337963}

\bibitem[Su]{Sus} A. A. Suslin, \emph{Torsion in $K_2$ of fields}, K-Theory {\bf 1} (1987), no. 1, 5--29.
\MR{0899915}

\bibitem[VA]{VA} A. V\'arilly-Alvarado, \emph{Arithmetic of del Pezzo surfaces}. Dans: Birational geometry, rational curves, and arithmetic, pp. 293--319, Simons Symp., Springer, Cham, 2013.
\MR{3114932}

\end{thebibliography}
\bibliographymark{References}
% This would change the heading "References" by "Bibliography"
% \renewcommand{\refname}{Bibliography}
\def\cprime{$'$}

\end{document}